\newtheorem{theorem}{Theorem}
\newtheorem{lem}{Lemma}
\newtheorem{proposition}{Proposition}
\newtheorem{cor}{Corollary}
\newtheorem{conjecture}{Conjecture}
\newtheorem{claim}{Claim}
\newcommand{\R}{{\mathbb R}}
\newcommand{\N}{{\mathbb N}}
\title[A cyclic System with Delay]{A Cyclic System with Delay and its Characteristic Equation}
\subjclass{Primary: 34K06, 34K11, 34K13; Secondary: 34K20, 34K25.}
\keywords{Cyclic delay systems, sign feedback, overall negative feedback, linearization,  characteristic equation, leading eigenvalue,
eigenvalues with positive real parts, asymptotic behavior of eigenvalues, monotone solutions, oscillating solutions}
\email{maelena@ucalgary.ca}
\email{karel.hasik@math.slu.cz}
\email{afi1@psu.edu}
\email{trofimch@inst-mat.utalca.cl}
\thanks{S.\,I.\, Trofimchuk is the corresponding author, e-mail: trofimch@inst-mat.utalca.cl }
\begin{document}
\maketitle

\centerline{\scshape Elena Braverman}
\medskip
{\footnotesize
Dept. of Math. and Stats., University of Calgary,
2500 University Drive N.W., Calgary, AB, Canada T2N 1N4}
\medskip

\centerline{\scshape Karel Hasik}
\medskip
{\footnotesize  Mathematical Institute, Silesian University, 746 01 Opava, Czech Republic}
\medskip

\centerline{\scshape Anatoli F. Ivanov}
\medskip
{\footnotesize Department of Mathematics, Pennsylvania State University, P.O. Box PSU, Lehman PA 18627, USA}

\medskip
\centerline{\scshape Sergei I. Trofimchuk}
\medskip
{\footnotesize  Instituto de Matematica y Fisica, Universidad de Talca, Casilla 747, Talca, Chile}

\medskip

\bigskip

\begin{abstract}
A nonlinear cyclic system with delay and the overall negative feedback is considered.
The characteristic equation of the linearized system is studied in detail. Sufficient conditions for the oscillation of all solutions and for the existence of monotone solutions are derived in terms of roots of the characteristic equation.
\end{abstract}

\section{Introduction}
\label{intro}
This paper studies several aspects of a cyclic system of delay differential equations of the form
\begin{eqnarray}\label{dds}
x_1^{\prime}(t) + \lambda_1x_1(t) & = & f_1(x_2(t-\tau_2))\nonumber \\
x_2^{\prime}(t) + \lambda_2 x_2(t) & = & f_2(x_3(t-\tau_3))\nonumber \\
\dots & \dots & \dots \\
x_{n-1}^{\prime}(t) + \lambda_{n-1} x_{n-1}(t) & = & f_{n-1}(x_n(t-\tau_n))\nonumber \\
x_{n}^{\prime}(t) + \lambda_{n} x_{n}(t) & = & f_{n}(x_1(t-\tau_1)),\nonumber
\end{eqnarray}
where $\lambda_k>0$ and $f_k$ are continuous real-valued functions, $f_k\in C(\mathbb R, \mathbb R),\; 1\le k\le n$.

System (\ref{dds}) has numerous applications in modeling various real world phenomena. Just to mention a few, it was proposed as a mathematical model of protein synthesis processes where natural physiological delays are taken into account \cite{Goo65,Mah80}.
Its scalar ($n=1$) version was used as a mathematical model of multiple processes in physiology, medicine, and physics among others \cite{Had79,HadTom77,MacGla77,M-PNus89,WazLas76}.
Its two-dimensional case $n=2$ was proposed as a model of intracellular circadian rhythm generator \cite{SchKliPenvPe99}. For other applications such as models of neural networks see e.g. \cite{Hop82,Wu01} and further references therein. In the one-dimensional case the corresponding scalar equation
$x^{\prime}(t)+\lambda x(t)=f(x(t-\tau))$
 was comprehensively studied in numerous publications, many of which are summarized as parts of several monographs, see e.g. \cite{DieSvGSVLWal95,Ern09,Hale,Kua93,Smi11}. These monographs also offer an extensive list of references to other applications.
The two-dimensional case was studied in detail in paper \cite{adH79a}, in the form of a second order differential delay equation.
The three-dimensional case of system (\ref{dds}) was analyzed in \cite{IvaBLW04}.

We assume throughout the paper that all functions involved are continuous on $\mathbb R$ and satisfy either positive
\begin{equation}\label{pf}
x\cdot f_k(x)>0,\quad \forall x\ne0,
\end{equation}
or negative feedback
\begin{equation}\label{nf}
x\cdot f_k(x)<0,\quad \forall x\ne0
\end{equation}
hypothesis, and that the overall feedback is negative
\begin{equation*}\label{totalnf}
\prod_{k=1}^n \left(\frac{f_k(x)}{x}\right) <0, \quad \forall x\ne0.
\end{equation*}
In addition, we require that at least one of the functions  $f_k, 1\le k\le n,$ is bounded from one side
\begin{equation}\label{osbd}
f_k(x)\le M\; \text{or}\; f_k(x)\ge-M\;\text{for some}\; M>0 \; \text{and all}\; x\in\mathbb R.
\end{equation}
Besides, we shall assume that each $f_k$ is continuously differentiable in a neighborhood of $x=0$ with $f_k^{\prime}(0)\ne0$.
The sign assumptions (\ref{pf}) and (\ref{nf}) on the nonlinearities $f_k$ imply that system (\ref{dds}) has only one constant solution $(x_1,x_2,\dots,x_n)=(0,0,\dots,0)$.

By a simple change of variables $x_k$ and a modification of functions $f_k$ system (\ref{dds}) can be reduced to a standard form
\begin{equation}\label{sdds}
\begin{array}{lll}
x_1^{\prime}(t) + \lambda_1x_1(t) & = & f_1(x_2(t)) \\
x_2^{\prime}(t) + \lambda_2 x_2(t) & = & f_2(x_3(t)) \\
\dots & \dots & \dots \\
x_{n-1}^{\prime}(t) + \lambda_{n-1} x_{n-1}(t) & = & f_{n-1}(x_n(t)) \\
x_{n}^{\prime}(t) + \lambda_{n} x_{n}(t) & = & f_{n}(x_1(t-\tau)),
\end{array}
\end{equation}
where $\tau=\tau_1+\tau_2+\dots+\tau_n>0$, all $f_k, 1\le k\le n-1$ satisfy positive feedback assumption (\ref{pf}), while $f_n$ satisfies negative feedback assumption (\ref{nf}) \cite{adH79a,IvaBLW04,Mah80}.
Indeed, the new variables $z_k(t), 1\le k\le n,$ can be defined by $z_1(t)=x_1(t), z_2(t)=x_2(t-\tau_2), z_3(t)=x_3(t-(\tau_3+\tau_2)),\dots, z_n(t)=x_n(t-(\tau_n+\dots+\tau_2))$. If a nonlinearity $f_k(x)$ satisfies the negative feedback condition (\ref{nf}) then the new nonlinearity $\hat f_k(x)=f_k(-x)$ will satisfy the positive feedback condition (\ref{pf}). The new variable $\hat z_k=-z_k$ should also be introduced simultaneously.

Our considerations and results throughout the remainder of this paper will refer to the above system (\ref{sdds}) with the specified sign assumptions on functions $f_k, 1\le k\le n$.  The phase space of system (\ref{sdds}) is given by $\mathbb X=C([-\tau,0],\mathbb R)\times\mathbb R^{n-1}$, where  $C([-\tau,0],\mathbb R)$ is Banach space of continuous real valued functions defined on the interval $[-\tau,0]$ with the supremum norm,  and $\mathbb R^{n-1}$ is $(n-1)$ dimensional Euclidean space.
It is easy to see that for arbitrary initial data $(\varphi, x_2,x_3, \dots, x_n)\in \mathbb X$ there exists a uniquely determined corresponding solution
$(x_1(t),x_2(t),\dots, x_n(t))$ to system (\ref{sdds}) defined for all $t\ge0$. Such solution can be found (constructed) by a standard step-cyclic method of integrating system (\ref{sdds}) starting with its last equation.

Without loss of generality, and in order to be specific in assumption (\ref{osbd}), we also assume that the last nonlinearity $f_n$ in system (\ref{sdds}) is bounded from above
\begin{equation}\label{osb}
f_n(x)\le M\; \text{for some}\; M>0 \;\text{and all}\; x\in\mathbb R.
\end{equation}
The possibility when another nonlinearity is one-sided bounded instead can be reduced to this one. The case when $f_n$ is bounded from below is treated along the same reasoning as the case when (\ref{osb}) holds.

The following higher order delay differential equation
\begin{equation}\label{hodde}
\left(
\frac{d}{dt}
+\lambda_1 \right) \left( \frac{d}{dt}+\lambda_2 \right) \cdots
\left( \frac{d}{dt}+\lambda_n \right) x(t) = f(x(t-\tau))
\end{equation}
is closely related to system (\ref{sdds}).
In fact, equation (\ref{hodde}) is a special case of system  (\ref{sdds}) when an appropriate substitution is used to reduce it to a first order system.
Equation (\ref{hodde}) also appears in several applications; a partial case of it was studied in \cite{HalIva93}.

When system (\ref{sdds}) is linearized about the equilibrium $(x_1,x_2,\dots,x_n)=(0,0,\dots,0)$, it results in the following  linear system
\begin{eqnarray}\label{ldds}
x_1^{\prime}(t) + \lambda_1x_1(t) & = & A_1\,x_2(t)\nonumber \\
x_2^{\prime}(t) + \lambda_2 x_2(t) & = & A_2\,x_3(t)\nonumber \\
\dots & \dots & \dots \\
x_{n-1}^{\prime}(t) + \lambda_{n-1} x_{n-1}(t) & = & A_{n-1}\,x_n(t)\nonumber \\
x_{n}^{\prime}(t) + \lambda_{n} x_{n}(t) & = & A_{n}\,x_1(t-\tau),\nonumber
\end{eqnarray}
where $A_k=f_k^{\prime}(0),1\le k\le n.$
The transcendental equation
\begin{equation}\label{ChE}
(z+\lambda_1)(z+\lambda_2) \dots (z+\lambda_n) + ae^{-\tau z}=0,
\end{equation}
where $a=-A_1\cdot A_2\cdot\ldots\cdot A_n>0$, appears then as the characteristic equation of this linear system. The location of zeros of equation (\ref{ChE}) in the complex plane largely determines the behavior of solutions of linear system (\ref{ldds}). In particular, the stability or instability of the trivial solution $(x_1,x_2,\dots,x_n)=(0,0,\dots,0)$ and the oscillation of all solutions are decided by their location.
The primary goal of the present paper is to relate the knowledge about the location of zeros of the characteristic equation (\ref{ChE}) to properties of solutions of the nonlinear system (\ref{dds}).

The structure of the paper is as follows. In Section \ref{roots} we study in detail the characteristic equation (\ref{ChE}), in particular in terms of location of its zeros in the complex plane. Note that in cases $n=1$ or $n=2$, equation (\ref{ChE}) has been comprehensively studied in several papers, see e.g. \cite{adH79a,M-P88}; the case $n=3$ was considered in \cite{IvaBLW04}. Therefore, we emphasize the case $n\ge4$ in this work, comparing the new information with the known facts in the low-dimensional cases $n=1,2,3$. In Section \ref{osc} we establish conditions when all solutions of system (\ref{sdds}) oscillate. In particular, we show that when equation (\ref{ChE}) has no real zeros then all solutions of system ({\ref{sdds}}) oscillate; the converse is also true. Section \ref{nosc} relates real zeros of the characteristic equation (\ref{ChE}) and non-oscillatory solutions of system (\ref{sdds}). The latter has monotone solutions approaching its zero solution if and only if the characteristic equation (\ref{ChE}) has a negative root.
All results for system (\ref{sdds}) in this paper are derived under the standing assumption that functions $f_k$ are continuous with $f_k, 1\le k\le n-1$ satisfying the positive feedback assumption (\ref{pf}), $f_n$ satisfying the negative feedback assumption (\ref{nf}) and being bounded from above in the sense of (\ref{osb}).

\section{The Characteristic Equation}
\label{roots}

This section deals with some of the properties of transcendental equation (\ref{ChE}) and its zeros. We shall treat the real value $a$ as a parameter within the range $\mathbb R_+=\{a\in\mathbb R\;\vert\; a\ge0\}$, while the other constants $\lambda_k>0, 1\le k\le n$ and $\tau>0$ are assumed to be arbitrary but fixed.
\begin{lem}
\label{lem_real}
For system \eqref{sdds} with positive $\lambda_1, \dots, \lambda_n$, and $\tau>0$ exactly one of the following two options is possible:
\begin{enumerate}
\item[(i)]
for any $a>0$, there are no real solutions of its characteristic equation  \eqref{ChE};
\item[(ii)]
there exists $a_0>0$ such that for any $a$ satisfying $0<a\le a_0$, equation \eqref{ChE} has a negative real solution, while for any $a>a_0$ there are no real solutions.
\end{enumerate}
\end{lem}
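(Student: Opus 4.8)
The plan is to analyze real solutions $z\in\mathbb{R}$ of \eqref{ChE} by separating the polynomial part from the exponential part. Define $P(z) = (z+\lambda_1)(z+\lambda_2)\cdots(z+\lambda_n)$ and observe that a real $z$ solves \eqref{ChE} with parameter $a>0$ precisely when $P(z)e^{\tau z} = -a < 0$; equivalently, writing $g(z) := -P(z)e^{\tau z}$, we need $g(z) = a$ with $a>0$. So the set of admissible parameters is exactly $g(\mathbb{R}) \cap (0,\infty)$, and the whole lemma reduces to understanding the range of the single real function $g$ on the half-line where it is positive. First I would record that $P$ has exactly the $n$ negative roots $-\lambda_1,\dots,-\lambda_n$ (with multiplicity), and that for $z$ large and positive $P(z)>0$, so $g(z)\to-\infty$ as $z\to+\infty$; while as $z\to-\infty$, $P(z)$ behaves like $z^n$ and $e^{\tau z}\to0$, so $g(z)\to 0$. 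Hence $g(z)>0$ can only happen for $z$ in some bounded region to the left, essentially between and to the left of the $-\lambda_k$'s where $P$ takes negative values.

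Next I would pin down \emph{where} $g$ is positive. Order the distinct values among $\{-\lambda_k\}$ and note that $(-1)^nP(z)>0$ for $z < -\max_k\lambda_k$, so on that leftmost interval $g(z)$ has the sign of $(-1)^{n+1}$. The key dichotomy is then driven by the parity of $n$ together with the sign pattern of $P$ on the successive intervals determined by the roots $-\lambda_k$. Regardless of parity, the crucial analytic fact is: on any maximal open interval $I=(\alpha,\beta)$ (with $-\infty\le\alpha$, and $\beta$ finite) on which $g>0$, the function $g$ is continuous, vanishes (or tends to $0$) at both endpoints — at a finite endpoint because it is a root $-\lambda_k$ of $P$, at $-\infty$ because $e^{\tau z}\to0$ dominates — and is strictly positive inside, hence attains a positive maximum $a_0:=\max_{z\in\bar I}g(z)$. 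For each $a\in(0,a_0]$ the equation $g(z)=a$ then has a solution in $I$ by the intermediate value theorem, and for $a>a_0$ it has none. This already yields option (ii) with a \emph{negative} root (all of $I$ lies to the left of $-\min_k\lambda_k<0$, in fact left of $0$), provided such an interval $I$ exists.

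The remaining point is to show that there is \emph{at most one} such interval $I$, so that the two options are genuinely exclusive and exhaustive, and to identify when no such interval exists (option (i)). Here is where I expect the main obstacle: one must rule out the possibility that $g>0$ on two or more disjoint intervals, which a priori could happen if $P$ changes sign several times with the "right" orientation. The resolution is a logarithmic-derivative computation: on any interval free of roots of $P$ where $g\ne0$ one has $g'(z)/g(z) = -\bigl(\tau + \sum_{k}\frac{1}{z+\lambda_k}\bigr)$, and I would argue that this quantity, on the region relevant to $g>0$ (i.e. to the left of all $-\lambda_k$, or in the appropriate single "gap"), does not permit $g$ to rise from $0$, fall back to $0$, and rise again; more precisely, between two consecutive roots $g$ changes sign, so $g>0$ can occur on at most one of the $n+1$ intervals cut out by the roots, namely the one dictated by parity — and on that one interval the above monotonicity-type analysis of $\tau+\sum 1/(z+\lambda_k)$ shows $g$ has a unique critical point (a maximum). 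Finally, option (i) occurs exactly when the single parity-selected interval on which $P$ has the sign making $g$ positive is empty or $g$ fails to exceed $0$ there, i.e. when for all real $z$ one has $P(z)e^{\tau z}\ge 0$ on the relevant side — in practice, when $n$ is such that $g\le 0$ everywhere it could matter. I would close by noting the two options are mutually exclusive since the existence of the maximizing interval gives $a_0>0$ with roots for $0<a\le a_0$, directly contradicting (i), and exhaustive since every $a>0$ either lies in $g(\mathbb{R})$ or not.
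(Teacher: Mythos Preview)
Your core setup --- rewriting the equation as $g(z):=-P(z)e^{\tau z}=a$ and studying the range of $g$ --- is exactly the paper's approach (the paper uses $H(z)=P_n(z)e^{\tau z}=-g(z)$ and looks at its negative minimum). Where you go astray is in the ``remaining point'': you assert that $g>0$ can occur on \emph{at most one} of the intervals cut out by the roots $-\lambda_k$, and you try to justify this via a logarithmic-derivative argument. That claim is false. Between consecutive \emph{simple} roots $P$ alternates sign, so $g>0$ on roughly half of the intervals; for instance with $n=4$ and distinct $\lambda_1<\lambda_2<\lambda_3<\lambda_4$, one has $g>0$ on both $(-\lambda_2,-\lambda_1)$ and $(-\lambda_4,-\lambda_3)$.

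The good news is that this entire detour is unnecessary, and the paper's proof does without it. Since $g$ is continuous with $g(z)\to 0$ as $z\to-\infty$ and $g(0)=-\prod\lambda_k<0$, either $g\le 0$ on all of $\mathbb{R}$ (option~(i)), or $g$ attains a strictly positive global maximum $a_0:=\max_{z\in\mathbb{R}}g(z)$. In the latter case: for $a>a_0$ clearly $g(z)<a$ everywhere, so no real root; while for $0<a\le a_0$, by the intermediate value theorem applied between a point where $g=a_0\ge a$ and a nearby root of $P$ (where $g=0$), there is some $z$ with $g(z)=a$. That root is automatically negative because $P(z)>0$ (hence $g(z)<0$) for all $z\ge 0$. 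No interval-counting, no uniqueness of critical points, is needed --- just the global extremum and one application of the intermediate value theorem.
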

\begin{proof}
With the polynomial notation
$P_n(z)= (z+\lambda_1)(z+\lambda_2) \dots (z+\lambda_n),$
equation \eqref{ChE} can be rewritten as
$P_n(z)e^{\tau z} = -a.$
Consider the continuous function
\label{th1eq1}
$H(z) = P_n(z)e^{\tau z}.$
For real $z$ we have $\displaystyle \lim_{z \to -\infty} H(z)=0$ and $H(0)>0$. This function may satisfy $H(z) \geq 0$ for any real $z<0$, then the first scenario is implemented, since in this case $H(z)+a>0$ for any real $z$ and $a>0$. Otherwise, $H(z)$ can be negative, and it attains then a negative minimum value $-a_0<0$ for some negative values of $z$. Then for $a\in (0,a_0]$ there is a real root of \eqref{ChE} while for $a>a_0$ there are no real roots.
\end{proof}

Let us note that the first option when \eqref{ChE} never has a real root is possible, for example, when all $\lambda_k$ have even multiplicities.

\begin{lem} \label{scr}
Each complex root $z = \alpha+\beta\,i$, $\beta\not=0$ of the characteristic equation
$$
(z+\lambda_1)(z+\lambda_2)\cdots(z+\lambda_n) + a e^{-z\tau} =0, \quad\tau, a\not=0, \ \lambda_k \in \R, 1\le k\le n,
$$
is simple.
\end{lem}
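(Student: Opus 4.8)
The plan is to argue by contradiction, reducing a hypothetical double root to an impossible identity between the logarithmic derivative of the polynomial factor and a real constant. Write $P_n(z)=\prod_{k=1}^{n}(z+\lambda_k)$ and $F(z)=P_n(z)+ae^{-z\tau}$, so that the roots of the characteristic equation are exactly the zeros of the entire function $F$. Suppose, for contradiction, that $z_0=\alpha+\beta i$ with $\beta\neq 0$ is a zero of $F$ of multiplicity at least two; then $F(z_0)=0$ and $F'(z_0)=0$. Since $z_0$ is not real, $z_0\neq-\lambda_k$ for every $k$, hence $P_n(z_0)\neq 0$.

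Next I would eliminate the exponential term. From $F(z_0)=0$ we get $ae^{-z_0\tau}=-P_n(z_0)$, and since $F'(z)=P_n'(z)-a\tau e^{-z\tau}$, substituting yields $P_n'(z_0)+\tau P_n(z_0)=0$. Dividing by $P_n(z_0)\neq 0$ and recognizing the logarithmic derivative gives
\[
\sum_{k=1}^{n}\frac{1}{z_0+\lambda_k}=-\tau .
\]

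The final step is to take imaginary parts of this identity. Writing $z_0+\lambda_k=(\alpha+\lambda_k)+\beta i$, the denominator satisfies $|z_0+\lambda_k|^{2}=(\alpha+\lambda_k)^{2}+\beta^{2}\geq\beta^{2}>0$, so $\operatorname{Im}\dfrac{1}{z_0+\lambda_k}=-\dfrac{\beta}{|z_0+\lambda_k|^{2}}$. Hence the imaginary part of the left-hand side equals $-\beta\sum_{k=1}^{n}|z_0+\lambda_k|^{-2}$, which is nonzero because $\beta\neq 0$ and every summand is strictly positive, whereas the right-hand side $-\tau$ is real. This contradiction shows that no non-real root can have multiplicity greater than one.

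There is no deep obstacle here; the proof is a short computation, and the only points that require a little care are that $P_n(z_0)\neq 0$ (so that one may divide by it) and that the denominators $|z_0+\lambda_k|^{2}$ never vanish — both of which follow at once from $\beta\neq 0$. It is worth noting exactly where the standing hypotheses enter: $a\neq 0$ guarantees $P_n(z_0)\neq 0$ at a root, and $\tau\neq 0$ (indeed any real $\tau$) is what makes the comparison "real versus strictly negative imaginary part'' decisive.
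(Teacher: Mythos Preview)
Your proof is correct and follows essentially the same route as the paper's: assume a non-real double root, eliminate the exponential to obtain $\sum_{k=1}^n (z_0+\lambda_k)^{-1}=-\tau$, and derive a contradiction by taking imaginary parts. Your write-up is in fact slightly more explicit than the paper's in justifying why $P_n(z_0)\neq 0$ and why the denominators are nonzero.
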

\begin{proof}  If $z$ has the multiplicity two or higher then differentiating (\ref{ChE}) yields
$$
\sum_{k=1}^n\prod_{j\not=k}(z+\lambda_j) =  a\tau e^{-z\tau} =- \tau  \prod_{j=1}^n(z+\lambda_j).
$$
Therefore for $\alpha=\Re z$, $\beta=\Im z$, we have
$$
\sum_{k=1}^n\frac{1}{z+\lambda_k} = -\tau, \ \mbox{  implying } \  0 = \Im \sum_{k=1}^n\frac{1}{\alpha+\beta\,i+\lambda_k} = -\beta \sum_{k=1}^n\frac{1}{(\alpha+\lambda_k)^2+\beta^2} \not= 0,
$$
which is a contradiction.
\end{proof}

\begin{lem} \label{pure imagi}
Given positive $\lambda_j, 1\le j\le n,$ and $\tau>0$, there exists a uniquely defined strictly increasing sequence $a_k, k\in\mathbb N$, with $\lim_{k\to\infty}a_k=\infty$ and such that the following statements hold:
\begin{itemize}
\item[(i)] For each value $a=a_k$, equation (\ref{ChE}) has exactly one pair of purely imaginary roots $z=\pm\omega_k i$. Besides, the sequence $\omega_k$ is strictly increasing, $0<\omega_1<\omega_2<\omega_3< \dots$, with $\lim_{k\to\infty}\omega_k=\infty$.
\item[(ii)] For all values of $a\in(a_k,a_{k+1})$, equation (\ref{ChE}) has exactly $k$ pairs of complex conjugate roots $z=\alpha_j\pm\beta_j i, 1\le j\le k$ with positive real parts $\alpha_j>0$.
\item[(iii)]
For each complex root $z=\alpha_j+\beta_j i,\, 1\le j\le k$ from part (ii) above its positive real part $\alpha_j$ and its imaginary part $\beta_j$ are strictly increasing functions of $a$ for all $a>0$.  Moreover, $\lim_{a\to\infty}\alpha_j(a)=+\infty$ and $\lim_{a\to\infty}\beta_j(a)=\pi\,(2j-1)/\tau$, with
$\alpha_j(a)>\alpha_l(a)$ and $\beta_j(a)<\beta_l(a)$ for any pair $j,l$ such that $1\le j<l\le k$ and all $a>0$.
\end{itemize}
\end{lem}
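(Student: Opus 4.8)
The plan is to split the equation $P_n(z)+ae^{-\tau z}=0$, i.e.\ $P_n(z)e^{\tau z}=-a$, into its modulus and its argument and thereby reduce everything to two real scalar equations. Since $P_n(x)+ae^{-\tau x}>0$ for $x\ge 0$ there are no nonnegative real roots, so every root with $\Re z\ge 0$ is non-real and may be written $z=\alpha+\beta i$ with $\alpha\ge 0$, $\beta>0$; for such $z$ the characteristic equation is equivalent to the pair
\begin{align*}
L(\alpha,\beta)&:=\tfrac12\sum_{k=1}^n\log\!\big((\alpha+\lambda_k)^2+\beta^2\big)+\tau\alpha=\log a,\\
\Psi(\alpha,\beta)&:=\sum_{k=1}^n\arctan\frac{\beta}{\alpha+\lambda_k}+\tau\beta=(2m-1)\pi,\qquad m\in\mathbb N
\end{align*}
(the arctangents being unambiguous because $\alpha+\lambda_k>0$). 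The elementary monotonicity facts I will use throughout are that $L$ is strictly increasing in each of $\alpha,\beta$, while $\Psi$ is strictly increasing in $\beta$ and strictly decreasing in $\alpha$. For part (i) I set $\alpha=0$: the second equation becomes $\Phi(\beta):=\sum_k\arctan(\beta/\lambda_k)+\tau\beta=(2m-1)\pi$, and since $\Phi$ is a strictly increasing bijection of $[0,\infty)$ onto itself there is a unique $\omega_k>0$ with $\Phi(\omega_k)=(2k-1)\pi$, with $0<\omega_1<\omega_2<\cdots\to\infty$; the first equation at $(0,\omega_k)$ then forces $a=a_k:=R(\omega_k)$, where $R(\omega)=\prod_{j}\sqrt{\omega^2+\lambda_j^2}$ is strictly increasing, so $(a_k)$ is strictly increasing, $a_k\to\infty$, and at $a=a_k$ the only purely imaginary pair is $\pm\omega_k i$ by injectivity of $R$.

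For part (ii), fix $m\in\mathbb N$. By the monotonicity of $\Psi$ the level set $\{\Psi=(2m-1)\pi\}$ is the graph of a strictly increasing function $\beta=\psi_m(\alpha)$ on $[0,\infty)$ with $\psi_m(0)=\omega_m$; by the monotonicity of $L$ the level set $\{L=\log a\}$ is the graph of a strictly decreasing function $\beta=\chi_a(\alpha)$ on an interval $[0,\alpha_{\max}(a)]$ with $\chi_a(0)=R^{-1}(a)$ and $\chi_a(\alpha_{\max}(a))=0$. A root of ``class $m$'' with $\alpha,\beta>0$ is precisely a point of $\psi_m\cap\chi_a$ with $\alpha>0$; an increasing graph and a decreasing graph meet in at most one point, and they meet with $\alpha>0$ iff $\chi_a(0)>\psi_m(0)$, i.e.\ iff $a>a_m$. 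Hence for $a\in(a_k,a_{k+1})$ exactly the classes $m=1,\dots,k$ occur, which gives exactly $k$ roots with $\alpha,\beta>0$, i.e.\ $k$ conjugate pairs with positive real part, and no root lies on the imaginary axis since $a\notin\{a_j\}$. Simplicity of these roots is automatic, because the Jacobian of $(L,\Psi)$ has determinant $L_\alpha\Psi_\beta-L_\beta\Psi_\alpha>0$ and the implicit function theorem applies (one may alternatively quote Lemma \ref{scr}); in particular each $z_j(a)=\alpha_j(a)+\beta_j(a)i$ is analytic in $a$.

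For part (iii): increasing $a$ raises $\chi_a$ pointwise while $\psi_j$ is fixed, so the intersection $\psi_j\cap\chi_a$ moves to larger $\alpha$ and, along the increasing curve $\psi_j$, to larger $\beta$; hence $\alpha_j$ and $\beta_j$ are strictly increasing in $a$ (equivalently, implicit differentiation gives $dz/da=a^{-1}\big(\tau+\sum_k(z+\lambda_k)^{-1}\big)^{-1}$, whose real part is positive and whose imaginary part has the sign of $\beta_j$). For the limits as $a\to\infty$: each arctangent in $\Psi$ lies in $(0,\pi/2)$, so $\Psi=(2j-1)\pi$ forces $\tau\beta_j<(2j-1)\pi$, whence $\beta_j$ is bounded; if $\alpha_j$ were also bounded then $z_j$ would be bounded, contradicting $|P_n(z_j)|e^{\tau\alpha_j}=a\to\infty$, so $\alpha_j\to\infty$; then every arctangent tends to $0$ and therefore $\beta_j\to(2j-1)\pi/\tau$. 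For the ordering with $1\le j<l$: at a fixed $a$ the points $(\alpha_j,\beta_j)$ and $(\alpha_l,\beta_l)$ lie on the same decreasing curve $\chi_a$ but on the increasing curves $\psi_j$ and $\psi_l$, and $\psi_j<\psi_l$ pointwise (smaller constant $(2j-1)\pi$); since the intersection of a decreasing curve with the lower of two increasing curves has the larger abscissa, $\alpha_j>\alpha_l$, and then $\beta_j=\chi_a(\alpha_j)<\chi_a(\alpha_l)=\beta_l$.

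The step I expect to demand the most care is the counting in part (ii): one must verify that \emph{every} root with positive real part belongs to exactly one class $m\ge 1$ (this rests on such roots being non-real and on the argument sum being a well-defined positive real that must equal an odd multiple of $\pi$), that distinct classes yield distinct roots, and that no root sits on the imaginary axis for $a$ strictly between consecutive $a_k$'s. The little ``increasing curve versus decreasing curve'' observation is used three times --- for uniqueness, for monotonicity in $a$, and for the ordering --- and must be applied consistently. A minor technical point is the clause ``for all $a>0$'' in (iii): for $a\le a_j$ the root $z_j$ lies in the open left half-plane, so one continues the branch analytically (using that $\beta_j$ stays positive since $d\beta_j/da$ has the sign of $\beta_j$) and reruns the same monotonicity arguments, the inequality $\alpha_j>\alpha_l$ being in any case immediate for $a\in(a_j,a_l]$ because then $\alpha_l\le 0<\alpha_j$.
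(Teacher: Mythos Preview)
Your proof is correct and rests on the same modulus/argument decomposition as the paper's: the system $L(\alpha,\beta)=\log a$, $\Psi(\alpha,\beta)=(2m-1)\pi$ is exactly the paper's system (\ref{ChE-s}). Part (i) is handled identically. The implementations diverge slightly for (ii)--(iii). For (ii) the paper differentiates (\ref{ChE}) in $a$ to obtain $1/(az')=\tau+\sum_k(z+\lambda_k)^{-1}$, checks $\Re z'(a_k)>0$ at each crossing, and argues that no root can re-enter the left half-plane; you instead count roots at a \emph{fixed} $a$ by intersecting the increasing level curve $\psi_m$ with the decreasing level curve $\chi_a$, which is arguably cleaner and yields the exact count directly. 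For (iii) the paper uses the explicit formulas for $\alpha'(a)$ and $\beta'(a)$, whereas you read off monotonicity and the ordering from how the curve $\chi_a$ moves relative to the fixed family $\{\psi_m\}$; the paper's ordering argument goes through Claim~\ref{prop0} (at most one root on any vertical or horizontal half-line) plus continuity, which is equivalent to your observation that both roots lie on the same decreasing curve $\chi_a$. One small point in favor of the differentiation route: the formula $1/(az')=\tau+\sum_k(z+\lambda_k)^{-1}$ makes sense for any $z$ with $\beta\neq 0$ regardless of the sign of $\alpha+\lambda_k$, so the ``for all $a>0$'' clause in (iii) is handled uniformly, whereas your arctangent parametrization needs $\alpha>-\min_k\lambda_k$ and a separate continuation argument beyond that range --- you flag this yourself, and your fallback to the derivative formula is exactly what the paper does.
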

\begin{proof} (i)\,
Assume that equation (\ref{ChE}) has a pair of purely imaginary roots $z=\pm\omega i$, $\omega>0.$ Substituting $z=\omega\, i$ into characteristic equation (\ref{ChE}) yields
$$
\prod_{j=1}^n (\lambda_j+\omega\, i)+a\exp\{-\omega\tau i\}=0.
$$
By representing each value $\lambda_j+\omega i, 1\le j\le n,$ as
$$
\lambda_j+\omega i=\rho_j\,\exp\{i\,\theta_j\}, \quad \text{where}\quad \rho_j(\omega)=\sqrt{\omega^2+\lambda_j^2} ,\; \theta_j(\omega)=\tan^{-1}\left(\frac{\omega}{\lambda_j}\right),
$$
we arrive at the following equation
$$
\rho_1\,\rho_2\,\dots\rho_n\, \exp\left\{\left(\sum_{j=1}^n\theta_j\right)i\,\right\}=a\, \exp\{-i\,\omega\tau+i\,\pi(2k-1)\},\; k\in \mathbb N,
$$
which is equivalent to the system
\begin{equation}\label{polar}
\prod_{j=1}^n(\omega^2+\lambda_j^2)=a^2,\quad \sum_{j=1}^n\theta_j=-\omega\tau+\pi (2k-1),\; k\in\mathbb N.
\end{equation}

For each $j\in\{1,2,\dots,n\}$ the corresponding component $\theta_j(\omega)$ is an increasing and concave down function in $\omega>0$ with $\theta_j(0)=0$ and $\theta_j(+\infty)=\pi/2$. This in turn implies that the function $\Theta_0(\omega)=\sum_{j=1}^n\theta_j$ is increasing and concave down for $\omega\ge0$ with $\Theta_0(0)=0$ and $\Theta_0(+\infty)=n\pi/2$.
For the family of lines $y=-\omega\tau+\pi(2k-1)$, $k\in\mathbb N,$ there exists exactly one intersection of each of the lines
with the graph of $y=\Theta_0(\omega)$. The corresponding positive values $\omega_k$, $k \in \mathbb N,$ are strictly increasing, $0<\omega_1<\omega_2<\dots$, with $\omega_k\to+\infty$ as $k\to\infty$,
see Fig.~\ref{figure1}.
\begin{figure}[ht]
\includegraphics[scale=0.42]{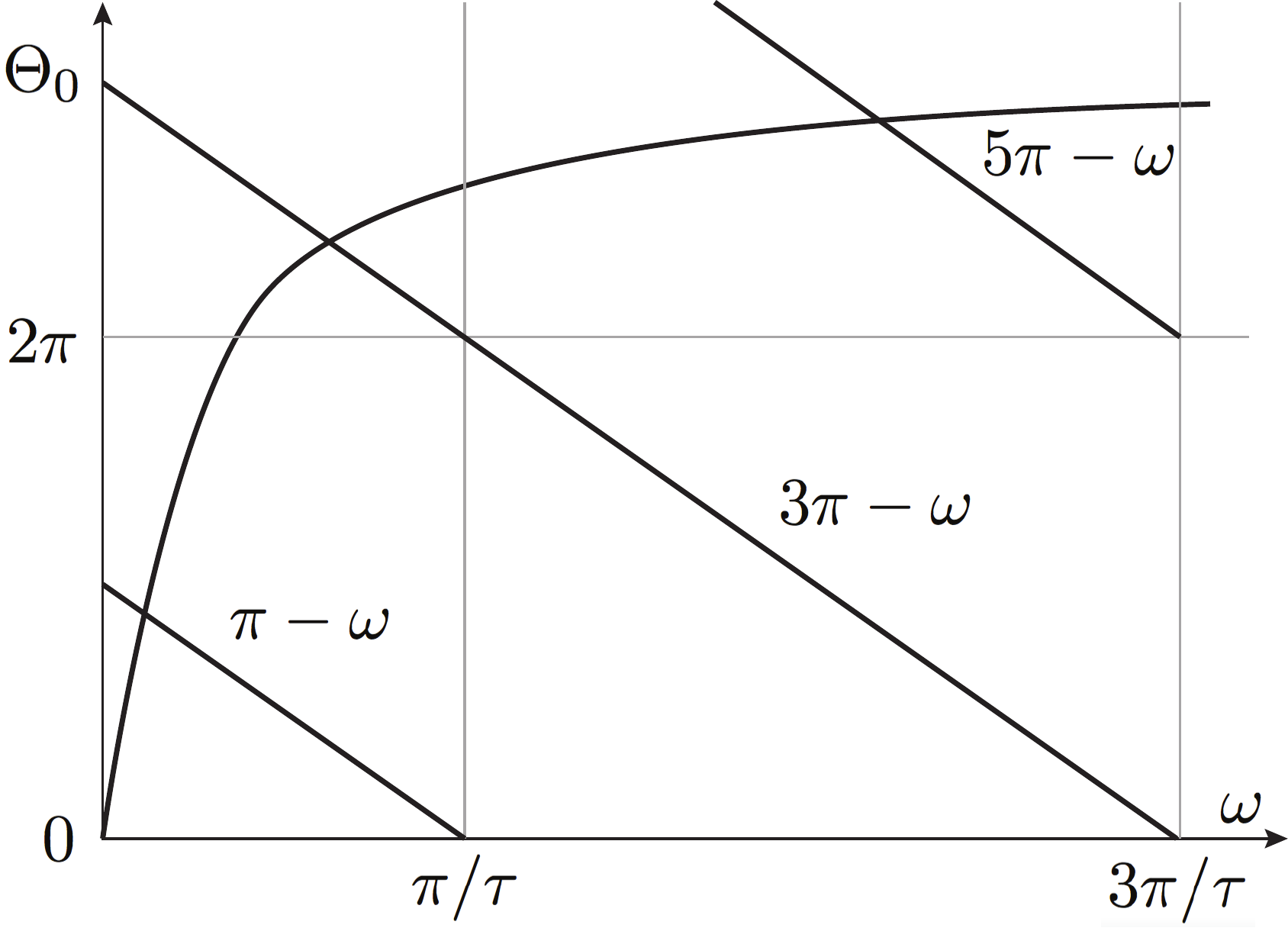}\hspace{3mm}
\includegraphics[scale=0.42]{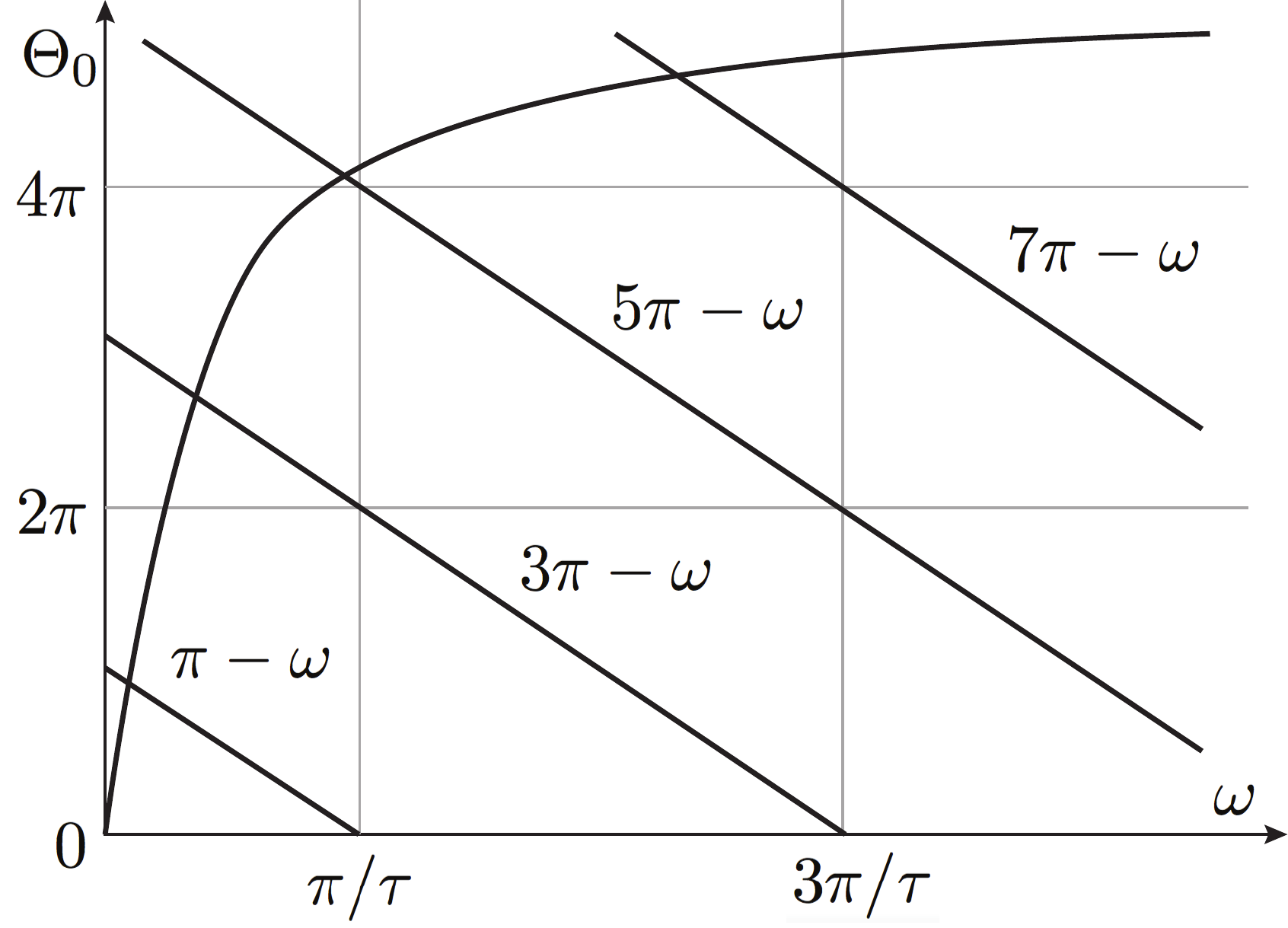}
\caption{The graphs of $\Theta_0(\omega)=\sum_{j=1}^n\theta_j$ and $y=-\omega\tau+\pi(2k-1)$, $k\in\mathbb N$ for
$n=6$ (left) and $n=10$ (right).}
\label{figure1}
\end{figure}
One can also have an asymptotic representation for $\omega_k$ when $k\to\infty$ as $\omega_k\sim (\pi/\tau)(2k-1-\frac{n}{2})$.
Note that the values of $\omega_k$ are independent of the parameter $a$ as they are determined by the second equation of (\ref{polar}).

Given a particular value $\omega_k$ one can use then the first equation of system (\ref{polar}) to determine the corresponding value of $a_k$ as
$$
a_k=\sqrt{\prod_{j=1}^n(\omega_k^2+\lambda_j^2)} \, , \quad k\in \mathbb N.
$$
It is straightforward to see that $0<a_1<a_2<a_3<\dots$ with $a_k\to+\infty$ as $k\to\infty$.
This proves part (i) of the lemma.

(ii)\, By differentiating equation (\ref{ChE}) with respect to the parameter $a$ we obtain
\begin{equation*}
P_n^\prime(z)\, z^\prime+\exp\{-z\tau\}-a\tau\exp\{-z\tau\}\,z^\prime=0\;   \Longrightarrow  \; z^\prime\left[P_n^\prime(z)+\tau\, P_n(z)\right]=\frac{P_n(z)}{a}.
\end{equation*}
Clearly $z^\prime\ne0$, so one has the representation
\begin{equation}\label{z'}
\frac{1}{az^\prime}=\frac{P_n^\prime(z)}{P_n(z)}+\tau=\sum_{j=1}^{n}\frac{1}{z+\lambda_j}+\tau.
\end{equation}
By taking the real part of equation (\ref{z'}) and evaluating it at $z=i \omega_k$, we have
$$
\Re\left(\frac{1}{az^\prime(a)}\right)=\tau+\sum_{j=1}^{n}\Re\left(\frac{1}{\lambda_j+i\omega_k}\right)=
\tau+\sum_{j=1}^{n}\left(\frac{\lambda_j}{\lambda_j^2+\omega_k^2}\right)>0
\Longrightarrow\; \Re\, z^\prime(a_k)>0, \; \forall k\in \mathbb N .
$$
Therefore, a pair of complex conjugate solutions $z(a)=z_k(a)=\alpha_k(a)\pm i\,\beta_k(a), k\in\mathbb N,$ to equation (\ref{ChE}) crosses the imaginary axis at the parameter value $a=a_k$ with
$$
\alpha_k(a_k)=0,\; \beta_k(a_k)=\omega_k\quad\text{and}\quad \alpha_k(a)>0\;\;\forall\; a>a_k.
$$
By the same reason, no zero of the characteristic function can leave the right half-plane as $a>0$ increases. This means that the roots of equation (\ref{ChE}), after crossing the imaginary axis, remain in the right half-plane as $a$ continues to increase. They also cannot intersect the interval $[0,+\infty)$ as (\ref{ChE}) has no positive solutions.
All this proves part (ii) of the lemma.

(iii) Substituting  the derivative $z^\prime(a)=\alpha^\prime(a)+\beta^\prime(a)\, i$ of a zero $z=\alpha+\beta\, i$ into equation (\ref{z'}) one arrives at the identity
$$
\frac{\alpha^\prime-i\beta^\prime}{a[(\alpha^\prime)^2+(\beta^\prime)^2]}=\tau+\sum_{j=1}^{n}\frac{\alpha+\lambda_j-i\beta}{(\alpha+\lambda_j)^2+\beta^2}.
$$
By separating the real and the imaginary parts one gets the inequalities
$$
\alpha^\prime(a)=a \left[ (\alpha^\prime)^2+(\beta^\prime)^2 \right]
\left[ \tau+\sum_{j=1}^{n}\frac{\alpha+\lambda_j}{(\alpha+\lambda_j)^2+\beta^2} \right] >0,\quad\forall a>0,
$$
and
\begin{equation}\label{up}
\beta^\prime(a)=\beta a \left[ (\alpha^\prime)^2+(\beta^\prime)^2\right]
\left[ \sum_{j=1}^{n}\frac{1}{(\alpha+\lambda_j)^2+\beta^2} \right]>0,\quad \forall a>0,
\end{equation}
which proves the monotonicity.

With $z=\alpha+\beta\,i$, the characteristic equation (\ref{ChE})
$$
(\alpha+\beta\,i+\lambda_1)(\alpha+\beta\,i+\lambda_2)\dots(\alpha+\beta\,i+\lambda_n)+a\exp\{-\tau(\alpha+\beta\,i)\}=0
$$
can be rewritten in the polar coordinates form
\begin{equation*}
\rho_1\,\rho_2\,\ldots\,\rho_n\, \exp\left\{\left({\text{Arg}}(\alpha+\beta\,i+\lambda_1)+\dots+{\text{Arg}}(\alpha+\beta\,i+\lambda_n)\right)\,i\right\}=
a\exp\{-\alpha\tau\}\exp\{(\pi-\beta\tau)\,i\},
\end{equation*}
or
\begin{equation}\label{ChE-P}
\sqrt{\prod_{j=1}^n\left[(\alpha+\lambda_j)^2+\beta^2\right]}\,
\exp\left\{\left(\sum_{j=1}^n\tan^{-1}\left(\frac{\beta}{\alpha+\lambda_j}\right)\right)\,i\right\}=a\exp\{-\alpha\tau\}\exp\{(\pi-\beta\tau)\,i\},
\end{equation}
where
$$
\rho_j=\sqrt{(\alpha+\lambda_j)^2+\beta^2},\qquad \text{Arg}\,(\alpha+\beta\,i+\lambda_j)=\tan^{-1}\left(\frac{\beta}{\alpha+\lambda_j}\right),\quad 1\le j\le n.
$$
Equation (\ref{ChE-P}) is equivalent to the system
\begin{equation}\label{ChE-s}
\sqrt{\prod_{j=1}^n\left[(\alpha+\lambda_j)^2+\beta^2\right]}\,\exp\{\alpha\tau\}=a,\qquad
\sum_{j=1}^n\tan^{-1}\left(\frac{\beta}{\alpha+\lambda_j}\right)=(2m-1)\pi-\beta\tau,\; m\in\mathbb N.
\end{equation}
The first equation of system (\ref{ChE-s}) implies the following
\begin{claim}\label{prop0}
Given an arbitrary $\alpha_0\in\mathbb R$, every vertical half-line $z=\alpha_0+\beta\,i, 0\leq \beta<\infty,$ in the complex plane $\mathbb C$ contains at most one solution of the characteristic equation (\ref{ChE}). For any fixed $\beta_0$, every horizontal half-line $z=\alpha+i\,\beta_0, 0<\alpha<\infty,$ contains at most one solution of (\ref{ChE}).
\end{claim}
This follows from the monotonicity in $\alpha$ and $\beta$ of the left-hand side of the first equation of (\ref{ChE-s}).

Consider now the first pair of complex conjugate solutions $z(a)=\alpha(a)\pm \beta(a)\,i$ to (\ref{ChE}) which appears from a pair of purely imaginary solutions $\pm\omega_1\,i$ at $a=a_1$ (see part (i) of this lemma for additional details). Due to the continuous dependence of  $\alpha(a),  \beta(a)$ on $a$, this pair solves the second equation of (\ref{ChE-s}) with  fixed $m=1$ for all $a>0$. Now,
for every $\alpha>0$ the function $\Theta_\alpha(\beta)=\sum_{j=1}^n\tan^{-1}\left(\frac{\beta}{\alpha+\lambda_j}\right)$ is concave down and increasing for all $\beta>0$. Therefore, there is precisely one intersection of the curve $Y=\Theta_\alpha(\beta)$ with the line $Y=\pi-\beta\tau$ on the interval $[0,\pi/\tau]$. Because of this uniqueness,  the abscissa  of the intersection point  should coincide  with $\beta(a)$. Clearly,  $\omega_1<\beta<\pi/\tau$. Since $\beta$ is increasing in $a>0$, the limit $\lim_{a\to\infty}\beta(a)=\beta^*\le\pi/\tau$ exists. The first equation of system (\ref{ChE-s}) then immediately implies $\lim_{a\to\infty}\alpha(a)=+\infty.$ If one assumes next that $\beta^*<\pi/\tau$ then taking the limit in the second equation of (\ref{ChE-s}) as $a\to\infty$ leads to the contradiction $0=\pi-\tau\,\beta^*>0.$ This proves the limits in part (iii) of the lemma when $m=1$. The other cases when $m\ge2$ are similar and left to the reader.

The last assertions  of part (iii) that $\alpha_j(a)>\alpha_l(a)$ and $\beta_j(a)<\beta_l(a)$ for $1\le j<l\le k$ follow from Claim \ref{prop0} together with the continuous dependence of $\alpha_j(a), \beta_j(a)$ on $a>0$. Note that this also means that the pair of complex conjugate solutions $z=\alpha\pm\beta\,i$ that appears from the pair $z=\pm\omega_1\,i$ at $a=a_1$ is the leading one in the sense that its real part $\alpha>0$ is the largest among all other solutions of the characteristic equation (\ref{ChE}) for any $a\ge a_1$. Its imaginary part always satisfies $0<\beta<\pi/\tau,\; \forall a\ge a_1.$ Both $\alpha$ and $\beta$ are increasing functions in $a$ for $a\ge0$.
\end{proof}

\begin{cor}\label{corol1a} Suppose that $a_0$ from Lemma 1 exists, then
$a_0 < a_{[(n+1)/2]}.$
\end{cor}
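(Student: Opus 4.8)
The plan is to recast the inequality as a statement about the ``troughs'' of the function $H(z)=P_n(z)e^{\tau z}$ on the negative axis, and to pair each trough with one of the purely imaginary roots $\pm i\omega_k$ of Lemma~\ref{pure imagi} by means of a monotone auxiliary function.

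\emph{Reduction.} Assume we are in case (ii) of Lemma~\ref{lem_real} and order $\lambda_1\le\dots\le\lambda_n$. Since $H\ge 0$ on $(-\infty,0]$ outside the (at most $[(n+1)/2]$) open intervals where $P_n<0$, one has $a_0=\max_I c_I$, where the maximum is over those intervals $I$ and $-c_I=\min_I H<0$; on each $I$ this minimum is attained at a \emph{unique} interior point $\alpha_I$, because $(\log|H|)'(z)=\tau+\sum_j(z+\lambda_j)^{-1}$ is strictly decreasing on $I$. For $z\in I$ exactly $2M_I-1$ of the numbers $\lambda_j$ lie below $-z$, with $1\le M_I\le[(n+1)/2]$. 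I claim it suffices to prove $c_I<a_{M_I}$ for every such $I$: then $a_0=\max_I c_I<\max_{1\le M\le[(n+1)/2]}a_M=a_{[(n+1)/2]}$, since the $a_k$ increase strictly.

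\emph{The index.} To each complex root $z=\alpha+i\beta$, $\beta>0$, of \eqref{ChE} I attach the index $M\in\mathbb N$ defined by $G(\alpha,\beta):=\beta\tau+\sum_{j=1}^n\mathrm{Arg}(\alpha+\lambda_j+i\beta)=(2M-1)\pi$; this is the argument identity behind the second equation of \eqref{ChE-s}, written with $\mathrm{Arg}\in(0,\pi)$ so as to remain valid when $\alpha+\lambda_j<0$, and since $\sum_j\mathrm{Arg}(\cdot)\in(0,n\pi)$ the index is unambiguous. The crucial point is that for each fixed $a>0$ and each $M$ there is at most one root with $\beta>0$ and index $M$. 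Indeed $\partial_\alpha G=-\sum_j\beta\,[(\alpha+\lambda_j)^2+\beta^2]^{-1}<0$, so the level set $\{G=(2M-1)\pi\}$ is a graph $\alpha=\gamma(\beta)$ over an interval of $\beta$; along it the modulus equation of \eqref{ChE-s} reads $\Psi(\beta)=a$ with $\Psi(\beta):=|P_n(\gamma(\beta)+i\beta)|\,e^{\gamma(\beta)\tau}$, and differentiating $G(\gamma(\beta),\beta)=(2M-1)\pi$ gives $\gamma'=A/B$ with $B=\sum_j\beta\,[(\gamma+\lambda_j)^2+\beta^2]^{-1}>0$ and $A=\tau+\sum_j(\gamma+\lambda_j)[(\gamma+\lambda_j)^2+\beta^2]^{-1}$, whence $(\log\Psi)'=\gamma'A+B=(A^2+B^2)/B>0$. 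So $\Psi$ is strictly increasing and $\Psi(\beta)=a$ has at most one solution.

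\emph{Conclusion.} Fix a trough $I$, put $M=M_I\le[(n+1)/2]$ and let $\gamma,\Psi$ be the objects above for this $M$ (the bound on $M$ is what makes $\gamma$ extend down to $\beta\to0^+$). From the expansion $G(\alpha,\beta)=(2M-1)\pi+\beta\,(\log|H|)'(\alpha)+o(\beta)$, valid for $\alpha\in I$ since exactly $2M-1$ of the $\lambda_j$ lie below $-\alpha$ there, I get $\gamma(\beta)\to\alpha_I$ as $\beta\to0^+$ (the unique zero of $(\log|H|)'$ in $I$), hence $\Psi(0^+)=|P_n(\alpha_I)|e^{\alpha_I\tau}=-H(\alpha_I)=c_I$; thus the curve $\beta\mapsto(\Psi(\beta),\gamma(\beta)+i\beta)$ issues from the double real root $\alpha_I$ of \eqref{ChE} at $a=c_I$. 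At the other end, the defining relation of $\omega_M$ in Lemma~\ref{pure imagi}(i) is exactly $G(0,\omega_M)=(2M-1)\pi$, so $\gamma(\omega_M)=0$ and $\Psi(\omega_M)=|P_n(i\omega_M)|=a_M$, and $\omega_M<(2M-1)\pi/\tau$ keeps $\omega_M$ in the domain of $\Psi$. Monotonicity of $\Psi$ now yields $c_I=\Psi(0^+)<\Psi(\omega_M)=a_M$, and the reduction above finishes the proof. The step I expect to require the most care is the monotonicity of $\Psi$ (equivalently, that the index-$M$ branch is a single curve interpolating between a double real root of \eqref{ChE} and the $M$-th purely imaginary root); a secondary technical point is that $\gamma(\beta)$ really tends to $\alpha_I$ rather than drifting to an endpoint of $I$ as $\beta\to0^+$, which is a quadrant count for distinct $\lambda_j$ but needs a more delicate argument near endpoints when some $\lambda_j$ coincide.
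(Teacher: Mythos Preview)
Your proof is correct and follows a genuinely different strategy from the paper's.

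The paper argues by continuation in the parameter $a$: starting from $a=0$, where the roots of \eqref{ChE} are the real numbers $-\lambda_j$, it tracks all roots simultaneously as $a$ increases. Rouch\'e's theorem controls the situation for small $a$; the monotonicity of imaginary parts established in \eqref{up} guarantees that complex conjugate pairs never recombine on the real axis; and Claim~\ref{prop0} preserves the real-part ordering of the roots. The upshot is a ``layering'' picture: the roots $z_1(a),\dots,z_n(a)$ (together with one extra root for odd $n$) form the top $[(n+1)/2]$ conjugate pairs, and any real root must become complex before all of these have crossed the imaginary axis, hence before $a$ reaches $a_{[(n+1)/2]}$.

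Your approach is instead local and direct. You attach to each trough $I$ of $H=P_ne^{\tau z}$ an index $M_I\le[(n+1)/2]$ determined by the number of negative factors of $P_n$ on $I$, and then follow the single level curve $G(\alpha,\beta)=(2M_I-1)\pi$ of the argument function from the double real root $\alpha_I$ (at $a=c_I$) up to the purely imaginary root $i\omega_{M_I}$ (at $a=a_{M_I}$). The monotonicity of the modulus along this curve, encoded in your identity $(\log\Psi)'=(A^2+B^2)/B>0$, yields the sharper conclusion $c_I<a_{M_I}$ for each individual trough, from which the corollary follows by taking the maximum over $I$. This is finer than the paper's bound, which only compares $a_0=\max_I c_I$ with the largest of the relevant $a_M$. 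The paper's continuation argument, on the other hand, establishes as a byproduct the global layering of all roots, information used elsewhere in the analysis. Both proofs flag, without fully resolving, the extra bookkeeping needed when some $\lambda_j$ coincide; your quadrant-count argument for $\gamma(\beta)\to\alpha_I$ is in fact fine in that case as well, since $(\log|H|)'$ remains strictly decreasing on each component of $\{P_n<0\}$ with the correct boundary behaviour.
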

\begin{proof}  First note that Lemma \ref{scr} and inequality (\ref{up}) imply that each complex root $z_j(a)$ of equation  (\ref{ChE})  in the upper half-plane  depends smoothly on $a$ and its imaginary part $\Im z_j(a)>0$ is a strictly increasing function of $a$.  Hence, when $a$ is increasing,  complex conjugate roots can not meet at a point of the real axis. In addition, Claim 1 assures that the relative location
of the real part of a pair of  complex conjugate roots and the real part of any other root of equation  (\ref{ChE})   does not change when $a>0$ is increasing.

Next, for $a=0$ equation  (\ref{ChE}) has $n$ real roots $z_j(0) = -\lambda_j, \ j =1,\dots, n$. Set $\lambda_\circ :=   \max\{\lambda_j, \ j =1,\dots, n\}$. Thus,  by Rouch\'e theorem \cite{Con78}, equation  (\ref{ChE})  has exactly $n$ roots in the half-plane $\{z: \Re z \geq - \lambda_\circ -1\}$,  for all sufficiently small $a\geq 0$.  When $a>0$ increases, the roots $z_j(a), \ j =1,\dots,n,$  change continuously
(possibly, some of them taking complex values)  and, as we have already mentioned, there exists $a' >0$ such that $\Re z_j(a)<\min\{\Re z_i(a), \ i =1,\dots,n-1\}$ for all $j >n$ and $a \in (0,a']$.

Our next arguments are dependent on the parity of $n$. We begin with the simpler case of even $n$.
When $n=2m$ the polynomial $p(z)=(z+\lambda_1)\dots (z+\lambda_n)$ is positive for all $z \in (-\infty,  \lambda_\circ)$  and therefore roots $z_j(a), \ j >n,$ are complex for all small $a>0$.
Consequently, if $n$ is even, the roots $z_j(a), \ j >n,$ remain  complex  and the following holds as $a>0$ increases:
 (i) the relative location of their real parts $\Re z_j(a), \ j >n,$ does not change; 
 (ii) $\Re z_j(a) < \min\{\Re z_i(a), \ i =1, \dots, n\}$ for all $j >n$ and $a >0$.  In particular, the roots $z_j(a), \ j >n=2m,$  can cross the imaginary axis only when the roots $z_j(a), \ j =1, \dots, 2m,$ have already left the left half-plane.

When  $n=2m+1$ is odd $\lim_{z\to -\infty}(z+\lambda_1)\dots (z+\lambda_n) =-\infty$, so equation  (\ref{ChE})  has at least one negative root
for all sufficiently small $a>0$:
$$
z_-(a) \ll \min\{\Re z_i(a), \ i =1, \dots, n\}.
$$
In particular, this shows that the parameter value $a_0>0$ from Lemma \ref{lem_real} always exists when $n$ is odd.   We claim that $z_-(a)$  is  actually a
unique negative root  in the interval $(-\infty, -\lambda_\circ-1)$ for all small $a>0$
(in particular, it is simple). Indeed, observe first that  $w(a): = |z_-(a)|= -z_-(a)$ satisfies the relation
\begin{equation}\label{peq}
r(w,a):= w\tau + \ln a = \sigma(w)=:\sum_{j=1}^n\ln(w-\lambda_j),
\end{equation}
where function $\sigma(w)$ is defined and strictly concave down for all $w > \lambda_\circ$.
The later property of $\sigma(w)$ implies that equation (\ref{peq}) can have at most two solutions.
Since $r(\lambda_\circ +1,a) < \sigma(\lambda_\circ +1)$ for all small $a$ and, for each fixed $a>0$, there exists $w_1> \lambda_\circ +1$ such that $r(w_1,a) > \sigma(w_1)$,  we can conclude that $w(a)$ is actually a unique positive root
of  (\ref{peq}) belonging to the interval $(\lambda_\circ +1, +\infty)$. This proves the claim.

Now, if we denote the root $z_-(a)$ as $z_{n+1}(a)$, then all other roots $z_j(a)$, $j > n+1$, of (\ref{ChE}) should be complex conjugate pairwise, and the relative location of their real parts should be preserved for all $a >0$ (see the first paragraph of the proof). This implies that, as $a>0$ increases,  the negative root $z_{n+1}(a)$ has to merge with the most negative real root among $\{z_1(a),\dots,z_n(a)\}$
 at some value $\tilde a >0$ giving rise to a new pair of complex conjugate roots. Using the same notation $z_{n+1}(a)$ for one of them, we conclude that
 $\Re z_{n+1}(a) > \Re z_j(a)$ for all $a >0$ and $j >n+1$. In particular, the roots $z_j(a), \ j >n+1,$  can cross the imaginary axis only after  the roots $z_j(a), \ j =1, \dots, 2m+2$ have already crossed it (as complex conjugate pairs).

The above discussion shows that independently of the parity of $n \in \N$,  the left half-plane does not contain real roots of equation  (\ref{ChE})  for all $a \geq a_{[(n+1)/2]}$.
\end{proof}

\section{Oscillation}\label{osc}

In this section we relate certain properties of the characteristic equation (\ref{ChE}) with the oscillatory properties of nonlinear system (\ref{sdds}).
In what follows,  $|\cdot|$ denotes the Euclidean norm in  the vector space $\R^m$ (we use the same symbol for all dimensions $m$). The corresponding operator norm of a linear map: $A:\R^m\to \R^m$ is denoted by $\|A\|$.

Recall that a continuous function $u(t), t\ge0$ is called oscillatory (about $0$) if there is a sequence $t_n\to+\infty$ such that $u(t_n)=0$. Usually a function that identically equals to zero for large $t$, $u(t)\equiv0, \forall t\ge T\ge0$, is excluded, and is not viewed as oscillating. If a function does not oscillate in the above sense it is called non-oscillatory.

Since system (\ref{sdds}) has only one equilibrium $(x_1,x_2,\dots,x_n)=(0,0,\dots,0)$, we are interested in solutions that oscillate about it. Due to the sign hypotheses imposed on the nonlinearities $f_k, 1\le k\le n,$ it is easy to see that if one of the components $x_k$ is oscillatory, then all the remaining components are oscillatory as well. Likewise, if one of the components $x_k$ is eventually of a definite sign (say, $x_k(t)>0$ or $x_k(t)<0$ for all $t\ge T>0$ and some $T>0$) then all the remaining components are also of a definite sign. Moreover, it can be easily shown that all components of any non-oscillatory solution $(x_1,\dots,x_n)$ go to zero as $t$ goes to infinity, $\lim_{t\to+\infty} |x_j(t)|=0$ for all $1\le j\le n$. See Lemma \ref{L2} below which provides more precise information about non-oscillating solutions of system (\ref{sdds}).  Thus, excluding eventually trivial solutions, every solution  to system (\ref{sdds}) either oscillates or is eventually of a definite sign with every component decaying to zero in the latter case. The oscillation of solutions to system (\ref{sdds}) happens in a stronger sense than that given by the above definition. It will be seen from the considerations of this section that if a component $x_k$ oscillates about zero then there exists an increasing sequence $s_j\to\infty$ such that $x_k(s_j)\cdot x_k(s_{j+1})<0.$

\begin{lem}
Suppose that $f_j\in C(\mathbb R,\mathbb R), 1\le j\le n$, each function $f_j, 1\le j\le n-1$ satisfies positive feedback assumption (\ref{pf}) while $f_n$ satisfies negative feedback assumption (\ref{nf}) and is one-sided bounded as specified by (\ref{osb}). Then every
solution $\mathbf x(t) = (x_1(t),x_2(t), \dots, x_n(t)), \ t \geq0,$ to system (\ref{sdds}) is  bounded. Moreover,  a constant $K>0$ can be indicated, which depends on $\tau, \lambda_1,\lambda_2,\dots,\lambda_n$ and $f_1,f_2,\dots, f_n$ only,  and such that for the solution $\mathbf x(t)$ there exists a time moment $t_x\ge0$ such that $|\mathbf x(t)|\le K, \; \forall t\ge t_x.$
\end{lem}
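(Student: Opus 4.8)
The plan is to exploit the one-sided bound $f_n\le M$ from \eqref{osb} together with the cascade structure of \eqref{sdds}, bootstrapping one-sided a priori estimates first \emph{down} the chain and then \emph{around} the loop. The basic tool is the elementary comparison estimate: if a scalar function $u$ satisfies $u'+\lambda u\le c$ for $t\ge T$ with $\lambda>0$, then $u(t)\le e^{-\lambda(t-T)}u(T)+c/\lambda$, so $\limsup_{t\to\infty}u(t)\le c/\lambda$; symmetrically for $u'+\lambda u\ge c$. Since solutions of \eqref{sdds} are global by the step-by-step construction mentioned earlier, there is no finite-time blow-up to worry about, and it suffices to produce asymptotic bounds.

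\emph{Step 1 (upper bound for $x_n$).} From the last equation and \eqref{osb}, $x_n'+\lambda_n x_n=f_n(x_1(t-\tau))\le M$ for all $t\ge0$, so there is a time $T_n$ (depending on the solution) after which $x_n(t)\le C_n:=M/\lambda_n+1$, where $C_n$ depends only on $M$ and $\lambda_n$. \emph{Step 2 (upper bounds down the cascade).} I propagate this for $k=n-1,n-2,\dots,1$ using the positive feedback \eqref{pf} of $f_1,\dots,f_{n-1}$: once $x_{k+1}(t)\le C_{k+1}$ for $t\ge T_{k+1}$, positive feedback gives $f_k(x_{k+1}(t))<0$ whenever $x_{k+1}(t)<0$, while on the compact set $[0,C_{k+1}]$ continuity gives $f_k\le M_k:=\max_{[0,C_{k+1}]}f_k$; hence $f_k(x_{k+1}(t))\le M_k$ for $t\ge T_{k+1}$, and the comparison estimate yields $x_k(t)\le C_k:=M_k/\lambda_k+1$ for $t\ge T_k\ge T_{k+1}$, with $C_k$ depending only on the data $M,\lambda_j,f_j$. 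Iterating gives $x_1(t)\le C_1$ for $t\ge T_1$.

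\emph{Step 3 (lower bound for $x_n$, closing the loop).} Feed $x_1(t-\tau)\le C_1$ back into the last equation and use the negative feedback \eqref{nf} of $f_n$: when $x_1(t-\tau)<0$ one has $f_n(x_1(t-\tau))>0$, while on $[0,C_1]$ continuity gives $f_n\ge -m_n:=\min_{[0,C_1]}f_n$; hence $x_n'+\lambda_n x_n\ge -m_n$ eventually, so $\liminf_{t\to\infty}x_n(t)\ge -m_n/\lambda_n$ and $x_n(t)\ge -c_n$ eventually for a constant $c_n$ depending only on the data. \emph{Step 4 (lower bounds back up the cascade).} With $x_n$ now trapped in the compact interval $[-c_n,C_n]$ for large $t$, continuity of $f_{n-1}$ on that interval bounds $f_{n-1}(x_n(t))$ from below, so $x_{n-1}$ is eventually bounded below; repeating down to $x_1$ gives two-sided bounds for every component. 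Taking $K$ to be (a fixed multiple of) the largest of the constants $C_k,c_k$ and $t_x$ the largest of the switch-on times gives $|\mathbf x(t)|\le K$ for $t\ge t_x$; boundedness on $[0,t_x]$ is immediate from continuity. Note $\tau$ enters only as a harmless time shift, so $K$ in fact does not depend on it.

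The one genuinely delicate point — and the only place the sign hypotheses are really used — is the \emph{order} of the bootstrap: the upper bounds must be pushed all the way down the chain first (this is exactly where positive feedback of $f_1,\dots,f_{n-1}$ is indispensable, since at that stage no lower bound on any component is yet available), then the upper bound on $x_1$ combined with the \emph{negative} feedback of $f_n$ is what unlocks a lower bound on $x_n$, after which plain continuity on compact sets closes everything up. Once this order is fixed, each individual step is a routine scalar comparison computation, and uniformity of $K$ is automatic because every threshold $C_k,c_k$ is built only from $M$, the $\lambda_j$ and the continuous $f_j$, while only the times $T_k$ depend on the particular solution.
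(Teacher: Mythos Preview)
Your proof is correct. It differs from the paper's in packaging rather than in essential content: the paper recasts \eqref{sdds} in the form \eqref{sdde}, introduces the composite map $F=F_1\circ\cdots\circ F_n$ (with $F_j=f_j/\lambda_j$), constructs an invariant interval $I_0=[\alpha,M_1]$ for $F$, and then shows (Lemmas \ref{invar1} and \ref{lemma2}, via the auxiliary Propositions \ref{prop1} and \ref{prop2}) that every solution enters the product set $\mathbb X_{I_0}$ in finite time. Your direct cascade of scalar comparison estimates achieves the same end more concretely --- your Steps 1--2 amount to the paper's first $n$ passes showing $x_j(t)\in F_j(\mathbb R)$ eventually and hence $x_1(t)\in F(\mathbb R)$; your Steps 3--4 correspond to the subsequent passes that pin down lower bounds. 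The paper's more abstract framing has the payoff of connecting the eventual bound to the dynamics of the one-dimensional map $F$ (invoked again later for the minimal invariant interval $I_*$), whereas your argument is shorter and entirely self-contained. Your closing remark that $K$ does not actually depend on $\tau$ is a mild sharpening of the stated conclusion.
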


We shall prove this lemma in the context of more general considerations of the following system
\begin{eqnarray}\label{sdde}
\varepsilon_1 x_1^{\,\prime}(t) &=& -x_1(t)+F_1(x_2(t))\nonumber \\
\varepsilon_2 x_2^{\,\prime}(t) &=& -x_2(t)+F_2(x_3(t))\nonumber \\
                 \ldots &{\,}& \ldots \\
\varepsilon_{n-1}x_{n-1}^{\,\prime}(t) &=& -x_{n-1}(t)+F_{n-1}(x_n(t))\nonumber\\
\varepsilon_n x_n^{\,\prime}(t) &=& -x_n(t)+F_n(x_1(t-\tau)),\nonumber
\end{eqnarray}
with arbitrary positive parameters $\varepsilon_j, 1\le j\le n,$ and general nonlinearities $F_j$ satisfying the same sign assumptions as $f_j$ in system (\ref{sdds}). System (\ref{sdds}) can be rewritten in the above form with $\varepsilon_j=1/\lambda_j>0$ and $F_j=(1/\lambda_j)\, f_j$, $1\le j\le n.$
In the limit case $\varepsilon_j\to0^+\;$  (\ref{sdde}) results in the system of pure difference equations
$$
x_1(t)=F_1(x_2(t)),\quad x_2(t)=F_2(x_3(t)),\quad \ldots\quad x_{n-1}(t)=F_{n-1}(x_n(t)),\quad x_n(t)=F_n(x_1(t-\tau)),
$$
which is further reduced to a single difference equation
$$
x_1(t)=(F_1\circ F_2\circ\dots\circ F_n)(x_1(t-\tau))=:F(x_1(t-\tau)).
$$
The asymptotic properties of the latter are well known, including the case of continuous time $t\ge0$ \cite{ShaMaiRom93}.
They are essentially determined by dynamical properties of the one-dimensional map behind the continuous function $F$:
\begin{equation*}\label{im}
F(x)=(F_1\circ F_2\circ\dots\circ F_n) (x).
\end{equation*}

We assume now that the one-dimensional map $F$ has an invariant interval $I\subset \mathbb R$, in the sense that the inclusion $F(I)\subseteq I$ holds.
Define next the intervals $I_k, 2\le k\le n,$ inductively by
\begin{equation*}\label{incl}
I_n=F_n(I), I_{n-1}=F_{n-1}(I_n)=(F_{n-1}\circ F_n)(I),\dots, I_2=F_2(I_3)=(F_2\circ\dots\circ F_n)(I).
\end{equation*}
Then $F_1(I_2)\subseteq I$ in view of the invariance of $I$ under $F$. Define next a subset $\mathbb X_I$ of the phase space $\mathbb X=C([-\tau,0],\mathbb R)\times\mathbb R^{n-1}$ of system (\ref{sdde}) by
\begin{eqnarray*}\label{X_I}
\mathbb X_I&=&C([-\tau,0], I)\times I_2\times I_3\times\dots\times I_n=\nonumber\\
       {\;}&=&\{\Psi=(\varphi(s),x_2^0,x_3^0,\dots,x_n^0)\in\mathbb X\;\vert\; \varphi(s)\in I\;\:\:\forall s\in[-\tau,0],x_2^0\in I_2,x_3^0\in I_3,\dots,x_n^0\in I_n \}.
\end{eqnarray*}
In view of the sign assumptions on the nonlinearities $f_k, 1\le k\le n,$ one easily sees that the function $F(x)$ satisfies the negative feedback condition, with $x=0$ being its only fixed point. Therefore, the constant solution $\mathbf x=(0,0,\dots,0)$ is the only equilibrium of system (\ref{sdde}).
Also, $F(x)$ is one-sided bounded in the sense of (\ref{osbd}) if such is at least one of the nonlinearities $F_k, 1\le k\le n$.

The following lemma provides an invariance property for system (\ref{sdde}). It shows that given arbitrary initial function $\Psi\in\mathbb X_I$ the  corresponding solution to system (\ref{sdde}) satisfies $\mathbf x(t,\Psi)\in\mathbb X_I$ for all $t\ge0$.

\begin{lem}\label{invar1}
Suppose that an interval $I$ is invariant under the map $F$ and an initial function $\Psi$ belongs to $\mathbb X_I$. Let $\mathbf x=\mathbf x(t,\Psi)=(x_1,x_2,\dots, x_n)$ be the corresponding solution to system (\ref{sdde}). Then the following inclusions hold
$$
x_1(t)\in I, x_2(t)\in I_2, \dots, x_n(t)\in I_n\quad{\text{for all}}\quad t\ge0
$$
and any positive  values of parameters $\varepsilon_1,\dots,\varepsilon_n$.
\end{lem}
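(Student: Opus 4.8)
The plan is to combine a single scalar fact about the affine equation $\varepsilon y'(t) = -y(t) + g(t)$ with the step-by-step cyclic integration of system (\ref{sdde}). The scalar fact is: if $g$ is continuous with $g(s)\in J$ for all $s\in[0,T]$, where $J\subseteq\mathbb R$ is an arbitrary interval, and $y(0)\in J$, then $y(t)\in J$ for all $t\in[0,T]$. This follows from the variation-of-constants representation
$$
y(t) = e^{-t/\varepsilon}\,y(0) + \frac{1}{\varepsilon}\int_0^t e^{-(t-s)/\varepsilon}\,g(s)\,ds
$$
together with the identity $e^{-t/\varepsilon} + \varepsilon^{-1}\int_0^t e^{-(t-s)/\varepsilon}\,ds = 1$: thus $y(t)$ is a weighted average, with nonnegative weights summing to $1$, of $y(0)$ and the values $g(s),\ 0\le s\le t$, so it lies between $\min\{y(0),\min_{[0,t]}g\}$ and $\max\{y(0),\max_{[0,t]}g\}$, and all four of these numbers belong to $J$ (the extrema of the continuous $g$ over the compact interval $[0,t]$ are attained). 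Since an interval is convex, $y(t)\in J$.

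Next I would run the cyclic method of steps on $[0,\tau]$, processing the equations of (\ref{sdde}) from the last one to the first. For $t\in[0,\tau]$ we have $x_1(t-\tau)=\varphi(t-\tau)\in I$ because $\Psi\in\mathbb X_I$; hence $F_n(x_1(t-\tau))\in F_n(I)=I_n$, and since $x_n(0)=x_n^0\in I_n$, the scalar fact gives $x_n(t)\in I_n$ on $[0,\tau]$. Feeding this into the $(n-1)$-st equation, $F_{n-1}(x_n(t))\in F_{n-1}(I_n)=I_{n-1}$ while $x_{n-1}(0)=x_{n-1}^0\in I_{n-1}$, so $x_{n-1}(t)\in I_{n-1}$ on $[0,\tau]$; and so on down the chain, until the first equation yields $F_1(x_2(t))\in F_1(I_2)\subseteq I$ — here the invariance $F(I)\subseteq I$ is used — and $x_1(0)=\varphi(0)\in I$, whence $x_1(t)\in I$ on $[0,\tau]$.

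Finally I would iterate over the successive delay windows $[k\tau,(k+1)\tau]$, $k=1,2,\dots$, by induction. The inductive hypothesis is that $x_1(t)\in I$ for all $t\in[-\tau,k\tau]$ and $x_j(k\tau)\in I_j$ for $2\le j\le n$ (both recorded at the end of the previous window). Then for $t\in[k\tau,(k+1)\tau]$ one has $x_1(t-\tau)\in I$, and repeating verbatim the descent through equations $n,n-1,\dots,1$ — now with initial time $k\tau$ and initial values $x_j(k\tau)\in I_j$ — gives $x_j(t)\in I_j$ on $[k\tau,(k+1)\tau]$ for $2\le j\le n$ and $x_1(t)\in I$ there. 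Since $\bigcup_{k\ge0}[k\tau,(k+1)\tau]=[0,\infty)$, this completes the argument, uniformly in $\varepsilon_1,\dots,\varepsilon_n>0$, which never enter the bounds.

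The only genuinely delicate point is the scalar preservation lemma when $I$, and hence some $I_j$, fails to be a closed interval; the observation above that the extrema of $g$ over a compact time interval are attained is exactly what lets strict membership $g(s)\in J$ propagate to $y(t)\in J$. Everything else is bookkeeping along the cyclic structure, and the existence and uniqueness of $\mathbf x(t,\Psi)$ on $[0,\infty)$ has already been recorded.
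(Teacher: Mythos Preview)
Your argument is correct and follows the same overall plan as the paper: establish a scalar invariance fact for $\varepsilon y'=-y+g$, then propagate it through the equations of (\ref{sdde}) on successive windows $[k\tau,(k+1)\tau]$ via the method of steps. The paper's Proposition~\ref{prop1} proves the scalar fact by a first-exit-time contradiction (looking at the sign of $u'$ at the putative exit point), whereas you obtain it directly from the variation-of-constants formula by recognizing $y(t)$ as a convex combination of $y(0)$ and the values $g(s)$. Your route has the minor advantage that it handles non-closed intervals $J$ without extra fuss---the paper tacitly takes $L=[c,d]$---and your remark about the extrema of $g$ being attained on $[0,t]$ is exactly what is needed there. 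Otherwise the two proofs are essentially identical in structure and content.
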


The proof of this lemma is based on a simple fact related to the solution of the initial value problem for the scalar differential equation
\begin{equation}\label{IVP}
\varepsilon u^\prime(t)+u(t)=b(t),\quad u(t_0)=u_0,
\end{equation}
with $\varepsilon>0$ and a continuous function $b(t), t\ge t_0$.

\begin{proposition}\label{prop1}
Suppose that the range of the function $b(t)$, $t\ge t_0,$ is an interval $L$ and $u(t_0)\in L$. Then the solution $u(t)$ to the initial value problem (\ref{IVP}) satisfies $u(t)\in L,\; \forall t\ge t_0.$
\end{proposition}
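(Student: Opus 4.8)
The plan is to use the explicit variation-of-constants formula for the linear scalar equation \eqref{IVP} and to exploit convexity of the resulting weighted average. Writing the solution of $\varepsilon u' + u = b(t)$, $u(t_0)=u_0$, explicitly, I would obtain
$$
u(t) = u_0\, e^{-(t-t_0)/\varepsilon} + \frac{1}{\varepsilon}\int_{t_0}^{t} e^{-(t-s)/\varepsilon}\, b(s)\, ds.
$$
The key observation is that the weights appearing here are nonnegative and sum to one: indeed
$$
e^{-(t-t_0)/\varepsilon} + \frac{1}{\varepsilon}\int_{t_0}^{t} e^{-(t-s)/\varepsilon}\, ds = e^{-(t-t_0)/\varepsilon} + \left(1 - e^{-(t-t_0)/\varepsilon}\right) = 1.
$$
Hence $u(t)$ is a convex combination (in the continuous sense, i.e. an average against a probability measure on $[t_0,t]$) of the value $u_0$ and of the values $b(s)$ for $s\in[t_0,t]$.

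Next I would invoke the hypothesis: $u_0\in L$ and $b(s)\in L$ for every $s\ge t_0$, where $L$ is an interval, hence convex. A convex combination (respectively, an integral average against a probability measure) of points of a convex set lies in that set; for a one-dimensional interval this is immediate by bounding $u(t)$ between $\inf L$ and $\sup L$ using the monotonicity of the integral. Therefore $u(t)\in L$ for all $t\ge t_0$, which is the assertion of Proposition~\ref{prop1}.

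There is essentially no obstacle here; the only point requiring a word of care is that $L$ need not be closed or bounded, so one should phrase the convexity argument via the estimates $\inf L \le u(t) \le \sup L$ (with the understanding that if, say, $\inf L$ is not attained then the inequality is strict whenever any of the involved values exceeds it, so $u(t)$ still stays inside $L$); alternatively one notes that $u$ is continuous and cannot reach an endpoint of $L$ without $b$ or $u_0$ doing so. This is the step I would be most careful to state cleanly, but it is routine. The formula-plus-convexity structure is exactly what makes the subsequent Lemma~\ref{invar1} work componentwise through the cyclic system \eqref{sdde}.
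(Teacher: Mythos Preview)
Your proof is correct and takes a genuinely different route from the paper's. The paper argues by contradiction via a first-exit-time argument: assuming $u$ leaves $L=[c,d]$, it locates a point $t''$ just after exit where $u(t'')<c$ and $u'(t'')<0$, and then reads off from $\varepsilon u' = b-u$ that $u'(t'')>0$, a contradiction. Your approach instead writes down the variation-of-constants formula and observes that $u(t)$ is a probability-weighted average of $u_0$ and the values $b(s)$, hence lies in the convex set $L$. Your method is more explicit and, as you note, handles open or half-open intervals without extra words (the paper's proof tacitly writes $L=[c,d]$, whereas the range of a continuous $b$ on $[t_0,\infty)$ need not be closed). The paper's barrier/sign-of-derivative argument, on the other hand, does not rely on having a closed-form solution and would transfer more readily to variants (e.g.\ variable $\lambda(t)>0$ or nonlinear monotone right-hand sides) where an explicit formula is unavailable. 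Both are standard and short; either is fully adequate here.
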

\begin{proof}
Indeed, assume the claim is not valid, and let  $t_1\ge t_0$ be the first point of exit of the solution $u(t)$ from the interval $L=[c,d]$. To be definite, assume that $u(t_1)=c, u(t)\in[c,d]\;\forall t\in[t_0,t_1]$, and every right neighborhood $(t_1,t_1+\delta)$ contains a point $t^\prime$ such that $u(t^\prime)<c$.
Then it also contains a point $t^{\prime\prime}$ such that $u(t^{\prime\prime})<c$ and $u^\prime(t^{\prime\prime})<0.$ However, equation (\ref{IVP}) then implies that $u^\prime(t^{\prime\prime})=(1/\varepsilon)[b(t^{\prime\prime})-u(t^{\prime\prime})]>0$, a contradiction.
The other possibility $u(t_1)=d, u(t)\in[c,d]\;\forall t\in[t_0,t_1]$ is considered along the same reasoning.
\end{proof}

Lemma \ref{invar1} can be proved now by using the cyclic structure of system (\ref{sdde}).
Given an initial value $\Psi=(\varphi(s),x_2^0,x_3^0,\dots,x_n^0)\in\mathbb X_I$, the last equation of system (\ref{sdde}) implies, in view of Proposition \ref{prop1}, that $x_n(t)\in I_n\; \forall t\in[0,\tau]$. Likewise, going upward along equations of the system  one sees that $x_{n-1}(t)\in I_{n-1},\dots,x_2(t)\in I_2$ and $x_1(t)\in I$ for all $t\in[0,\tau].$ By repeating the reasoning  cyclically again one concludes that
$x_n(t)\in I_n, x_{n-1}(t)\in I_{n-1},\dots,x_2(t)\in I_2, x_1(t)\in I, \forall t\in[\tau,2\,\tau]$. Then by induction these inclusions hold for all $t\ge0$.
\hfill $\square$

We construct now an invariant interval $I_0$ for the map $F$ based on the one-sided boundedness of the function $F_n$. Indeed with assumption (\ref{osb}) on the nonlinearity $f_n(x)$, $f_n(x)\le M$ for all $x\in\mathbb R$ and some $M>0$, one easily sees that the function $F=F_1\circ F_2\circ\dots\circ F_n$ is also one-sided bounded, $F(x)\le M_1$ for all $x\in\mathbb R$  and some $M_1>0$. Besides $F(x)$ satisfies the negative feedback condition (\ref{nf}).
Define next $\alpha=\min\{F(x), x\in [0,M_1]\}$. Then the interval $I_0:=[\alpha,M_1]$ is invariant under the map $F:$ $F(I_0)\subseteq I_0.$
Therefore, any solution $\mathbf x(t,\Psi)$ to system (\ref{sdde}) with the initial function $\Psi\in\mathbb X_{I_0}$ is uniformly bounded,
in view of Lemma \ref{invar1}.

We shall show next that every solution $\mathbf x(t)$ of system (\ref{sdde}), with an arbitrary initial function $\Psi\in\mathbb X$,  enters the set $\mathbb X_{I_0}$ in finite time.

\begin{lem}\label{lemma2}
Given an arbitrary initial function $\Psi\in\mathbb X$, there exists a finite time $T=T(\Psi)\ge0$ such that the corresponding solution $\mathbf x=\mathbf x(t,\Psi)$ to system (\ref{sdde}) satisfies $\mathbf x(t)\in \mathbb X_{I_0}$ for all $t\ge T$.
\end{lem}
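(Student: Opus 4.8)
The plan is to absorb an arbitrary solution into $\mathbb{X}_{I_0}$ by propagating one-sided bounds through the cyclic chain, one equation at a time, and then iterating the argument once more to pick up the lower bound. First I would use the one-sided boundedness $f_n(x)\le M$ (equivalently $F_n(x)\le M_n$ for a suitable $M_n>0$). Applying Proposition \ref{prop1} in the one-sided form — or rather the elementary comparison argument behind it — to the last equation $\varepsilon_n x_n'(t) = -x_n(t) + F_n(x_1(t-\tau))$, one gets $\limsup_{t\to\infty} x_n(t) \le M_n$, and in fact $x_n(t) \le M_n + \eta$ for all $t$ past some finite time, for any preassigned $\eta>0$. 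Then I would move up the chain: $F_{n-1}$ is monotone (positive feedback, continuously differentiable near zero is not needed here, only continuity and sign), so on the set where $x_n(t) \le M_n+\eta$ the quantity $F_{n-1}(x_n(t))$ is bounded above, hence $x_{n-1}(t)$ is eventually bounded above; repeating, $x_{n-2}, \dots, x_2$ and finally $x_1$ are all eventually bounded above by explicit constants. This gives a finite time $T_1$ after which $x_1(t) \le C_1$ for a constant $C_1$ depending only on the data.

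The second step is to feed this upper bound on $x_1$ back into the last equation to obtain a lower bound. Since $F_n$ satisfies negative feedback, $F_n$ is decreasing through $0$ and $F_n(x_1(t-\tau)) \ge F_n(C_1) =: -m_n$ once $t-\tau \ge T_1$ — wait, more carefully: on the half-line where $x_1 \le C_1$ we have $F_n(x_1) \ge \min\{F_n(x): x \le C_1\}$, which is finite because $F_n$ is continuous and, by negative feedback together with the structure, bounded below on $(-\infty, C_1]$ (its infimum there is attained or is $F_n$ evaluated near the boundary; in any case the composition $F = F_1\circ\cdots\circ F_n$ was already shown one-sided bounded, and one only needs a finite lower bound on each link over the relevant half-line). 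Then Proposition \ref{prop1}-type comparison on the last equation gives $x_n(t) \ge -m_n - \eta$ eventually, and propagating up the chain once more (each $F_j$, $1\le j\le n-1$, monotone increasing, hence sending a two-sided-bounded input to a two-sided-bounded output) yields two-sided bounds on every component past a finite time $T_2$.

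Finally I would reconcile these explicit bounds with the definition of $\mathbb{X}_{I_0}$, where $I_0 = [\alpha, M_1]$ with $\alpha = \min\{F(x): x\in[0,M_1]\}$: the point is that the crude constants produced above may be slightly larger than $M_1$ or slightly smaller than $\alpha$, so one more pass of the cyclic comparison — now starting from $x_1 \le M_1+\eta$ and using invariance of $I_0$ under $F$ to contract back toward $I_0$ — brings $x_1(t)$ into $[\alpha-\eta, M_1+\eta]$ and, since $\eta$ is arbitrary and $F(I_0)\subseteq I_0$ with $F$ a continuous self-map, ultimately into $I_0$ itself; then $x_n(t)\in I_n$, $x_{n-1}(t)\in I_{n-1}$, \dots, as in Lemma \ref{invar1}, for all $t\ge T$ with $T$ finite and depending only on $\Psi$ through how far out the preliminary estimates take effect. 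The main obstacle I anticipate is the bookkeeping in this last reconciliation step: making the "$\eta$ arbitrary, then pass to $I_0$" argument clean, i.e. showing that eventual membership in every slightly enlarged neighborhood of $I_0$, combined with the strict contraction coming from $F(I_0)\subseteq I_0$ and the negative-feedback decay of $\min\{F(x): x\in[0,M_1+\eta]\}\to\alpha$ as $\eta\to 0$, forces entry into $\mathbb{X}_{I_0}$ in genuinely finite time rather than merely asymptotically. Everything else is a routine cyclic iteration of the scalar comparison principle already isolated in Proposition \ref{prop1}.
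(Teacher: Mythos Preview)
Your plan has two problems, one minor and one structural.

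\textbf{Monotonicity.} You repeatedly assert that positive feedback makes $F_j$ monotone increasing and that negative feedback makes $F_n$ ``decreasing through $0$''. Neither follows: the condition $x F_j(x)>0$ says only that $F_j$ preserves sign, not that it is monotone. Fortunately this error is cosmetic for the upper-bound propagation: if $x_n(t)\le M_n+\eta$ then $F_{n-1}(x_n(t))\le \max\{F_{n-1}(x):0\le x\le M_n+\eta\}$ simply because $F_{n-1}(x)<0$ for $x<0$ and $F_{n-1}$ is continuous on the compact interval $[0,M_n+\eta]$. So the one-sided bounds go through, but not for the reason you give.

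\textbf{Reconciliation.} The real gap is the last step. Your $\eta$-argument shows only that $x_1(t)$ is eventually in every open neighborhood of $I_0$, i.e.\ asymptotic containment. To upgrade this to finite-time entry into $I_0$ you invoke ``strict contraction coming from $F(I_0)\subseteq I_0$'', but this inclusion need not be strict: nothing prevents $F(I_0)=I_0$ (take $F$ mapping $[0,M_1]$ onto $[\alpha,0]$ and $[\alpha,0]$ onto $[0,M_1]$). With equality, one more cyclic pass from $[\alpha-\eta,M_1+\eta]$ lands you only in a neighborhood of $I_0$ again, and the argument loops.

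The paper avoids the $\eta$-approximation entirely. Its device is Proposition~\ref{prop2}: for the scalar problem $\varepsilon u'=-u+b(t)$ with $b(t)\in L$, either $u$ enters $L$ in finite time or $u$ is monotone with limit an endpoint of $L$. Applied to the $j$-th equation with $L=F_j(A_{j+1})$, the second alternative gives a nonzero limit $x_j^*$; Corollary~\ref{corol1} then forces every other component to converge, and cycling around one finds $x_j^*=\hat F_j(x_j^*)$, contradicting that $0$ is the only fixed point. Hence the first alternative holds: $x_j$ enters $F_j(A_{j+1})$ in genuinely finite time, with no $\eta$. Iterating this single-step claim $3n$ times starting from $A_j=\mathbb R$ lands $x_1$ in $F^3(\mathbb R)\subseteq I_0$ and the other components in the corresponding $I_k$. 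The fixed-point contradiction is the missing idea in your plan; once you have it, the reconciliation problem disappears.
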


In order to prove Lemma \ref{lemma2} we need another simple auxiliary result about solutions to the initial value problem (\ref{IVP}).

\begin{proposition}\label{prop2}
Suppose that the range of $b(t), t\ge t_0,$ is an interval $L$ and $u(t_0)\not\in L$. Then one of the following holds for the solution $u(t)$ to the initial value  problem (\ref{IVP}):
\item[(i)] there exists a finite  $t_1\ge t_0$ such that $u(t)\in L$ for all $t\ge t_1$;
\item[(ii)] $u(t)$ is monotone for all $t\ge t_0$ and $\lim_{t\to\infty}u(t)=l^*$ where $l^*$ is one of the endpoints of the interval $L$.
\end{proposition}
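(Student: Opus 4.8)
The plan is to solve the linear ODE (\ref{IVP}) explicitly and read off the asymptotics from the variation-of-constants formula. Writing $L=[c,d]$, the solution with $u(t_0)=u_0\notin L$ is
\[
u(t)=u_0\,e^{-(t-t_0)/\varepsilon}+\frac{1}{\varepsilon}\int_{t_0}^t e^{-(t-s)/\varepsilon}\,b(s)\,ds .
\]
I would first dispose of the case $u_0>d$ (the case $u_0<c$ being symmetric, exactly as in Proposition \ref{prop1}). Since $b(s)\le d$ for all $s\ge t_0$, the integral term is bounded above by $d\,(1-e^{-(t-t_0)/\varepsilon})$, so $u(t)\le u_0 e^{-(t-t_0)/\varepsilon}+d(1-e^{-(t-t_0)/\varepsilon})$, which strictly decreases toward $d$. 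In particular $u(t)>c$ is automatic, so the only way $u$ can fail to be in $L$ is $u(t)>d$. Now I distinguish two subcases according to whether $u$ ever reaches $[c,d]$.

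If there exists a first time $t_1\ge t_0$ with $u(t_1)=d$, then at that instant (\ref{IVP}) gives $\varepsilon u'(t_1)=b(t_1)-d\le 0$; combined with the already-established upper bound $u(t)\le d+(u_0-d)e^{-(t-t_0)/\varepsilon}$ valid for all $t$, and the lower-barrier argument of Proposition \ref{prop1} applied from $t_1$ on (if $u$ dipped below $c$ at some point $>t_1$ there would be a point where $u<c$ and $u'<0$, contradicting $\varepsilon u'=b-u>0$ there), we get $u(t)\in[c,d]$ for all $t\ge t_1$, which is alternative (i). If on the other hand $u(t)>d$ for all $t\ge t_0$, then since $b(s)\le d<u(s)$ for every $s$, equation (\ref{IVP}) yields $u'(t)=(1/\varepsilon)(b(t)-u(t))<0$ for all $t$, so $u$ is strictly decreasing and bounded below by $d$; hence $\lim_{t\to\infty}u(t)=l^*$ exists with $l^*\ge d$. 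It remains to show $l^*=d$, i.e.\ $l^*$ is the nearer endpoint of $L$: from the explicit formula, $u(t)-d\le (u_0-d)e^{-(t-t_0)/\varepsilon}\to 0$, forcing $l^*\le d$, hence $l^*=d$. This is case (ii). The case $u_0<c$ is handled identically and produces $l^*=c$.

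I expect the only genuinely delicate point to be the monotonicity claim in (ii): one must be sure that when $u$ stays outside $L$ forever it cannot oscillate, and the clean way to see this is that "staying outside" on the high side forces $u>d\ge b(t)$ pointwise, which by (\ref{IVP}) pins the sign of $u'$ once and for all — there is no mechanism for the sign of $b(t)-u(t)$ to change. Everything else is a routine consequence of the Grönwall-type upper/lower barriers $u(t)\le d+(u_0-d)e^{-(t-t_0)/\varepsilon}$ (resp.\ the symmetric lower barrier), which also give the limit in (ii) for free and make the dichotomy (i)/(ii) exhaustive. No additional hypotheses on $b$ beyond continuity and $b([t_0,\infty))\subseteq L$ are needed.
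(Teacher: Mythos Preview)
Your argument is correct and covers all cases. The route differs from the paper's mainly in how you pin down the limit in case~(ii): you use the explicit variation-of-constants barrier $u(t)\le d+(u_0-d)e^{-(t-t_0)/\varepsilon}$ to force $l^*\le d$ directly, whereas the paper argues by contradiction --- if $l^*>d$ then $u'(t)=(1/\varepsilon)(b(t)-u(t))\le(d-l^*)/\varepsilon<0$ for all $t\ge t_0$, driving $u\to-\infty$, which is impossible. Your approach is slightly more computational but yields an explicit exponential rate for free; the paper's is purely qualitative and never writes down the integral formula. One small slip in your case~(i): the upper barrier $u(t)\le d+(u_0-d)e^{-(t-t_0)/\varepsilon}$, being strictly larger than $d$ for all finite $t$, does \emph{not} by itself keep $u$ from rising above $d$ again after $t_1$. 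What actually does the job is invoking Proposition~\ref{prop1} in full from time $t_1$ (since $u(t_1)=d\in L$), which gives both $u(t)\ge c$ and $u(t)\le d$ at once --- this is exactly what the paper does. Alternatively, rerun your barrier estimate with initial time $t_1$ and initial value $d$ to get $u(t)\le d$ for $t\ge t_1$.
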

\begin{proof}
To be definite, assume that $u(t_0)>d$ where $L=[c,d].$ Then the solution $u(t)$ is decreasing in some right neighborhood of $t=t_0$. If there exists a finite time $T>0$ such that $u(T)=d$ then, by Proposition \ref{prop1}, the solution satisfies $u(t)\in L$ for all $t\ge T$. If, on the other hand, $u(t)>d$ for all $t\ge t_0$, then $u(t)$ has the limit $\lim_{t\to\infty} u(t)=l^*\ge d.$ If $l^*>d$ then $\dot u(t)=(1/\varepsilon)[-u(t)+b(t)]\le[(d-l^*)/\varepsilon]<0, t\ge t_0,$ leading to the contradiction $u(t)\to-\infty$ as $t\to\infty$. The case $u(t_0)<c$ is treated similarly.
\end{proof}

\begin{cor}\label{corol1}
Suppose that $\lim_{t\to\infty} b(t)=b_0.$ Then the solution to the initial value problem (\ref{IVP}) also satisfies $\lim_{t\to\infty} u(t)=b_0$.
\end{cor}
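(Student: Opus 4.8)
The plan is to derive this from Propositions \ref{prop1} and \ref{prop2} applied on shrinking tails. Fix $\delta>0$. Since $b(t)\to b_0$, there is a time $t_\delta\ge t_0$ such that $b(t)\in L_\delta:=[b_0-\delta,\,b_0+\delta]$ for all $t\ge t_\delta$. I would now regard $t_\delta$ as a new initial instant and distinguish two cases. If $u(t_\delta)\in L_\delta$, then Proposition \ref{prop1} gives $u(t)\in L_\delta$ for all $t\ge t_\delta$, hence $|u(t)-b_0|\le\delta$ eventually. If $u(t_\delta)\notin L_\delta$, then Proposition \ref{prop2} gives two alternatives: either $u(t)\in L_\delta$ for all $t\ge t_1$ with some finite $t_1\ge t_\delta$, or $u(t)$ is eventually monotone with $\lim_{t\to\infty}u(t)=l^*$ equal to one of the endpoints $b_0\pm\delta$ of $L_\delta$; in the first alternative $\limsup_{t\to\infty}|u(t)-b_0|\le\delta$, and in the second $\lim_{t\to\infty}|u(t)-b_0|=\delta$. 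In every case $\limsup_{t\to\infty}|u(t)-b_0|\le\delta$, and since $\delta>0$ was arbitrary this yields $\lim_{t\to\infty}u(t)=b_0$.

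The one point that needs a remark is that Propositions \ref{prop1} and \ref{prop2} were stated for a forcing term whose range \emph{equals} an interval $L$, whereas here the range of $b$ on $[t_\delta,\infty)$ is only \emph{contained} in $L_\delta=[b_0-\delta,b_0+\delta]$. Inspection of the two proofs shows that they only ever use the one-sided bounds $c\le b(t)\le d$ at isolated points (to contradict $u'(t'')<0$ when $u(t'')<c$, and symmetrically), so the inclusion of the range in $L_\delta$ is all that is required; alternatively one may simply take $L_\delta$ to be the closure of $b([t_\delta,\infty))$, which is a closed subinterval of $[b_0-\delta,b_0+\delta]$.

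A direct alternative, which I would mention for completeness, is to use the variation-of-constants formula $u(t)=u_0e^{-(t-t_0)/\varepsilon}+\varepsilon^{-1}\int_{t_0}^{t}e^{-(t-s)/\varepsilon}b(s)\,ds$: the boundary term tends to $0$, and for the integral one picks $T\ge t_0$ with $|b(s)-b_0|<\delta$ for $s\ge T$, splits the integral at $T$ (the $[t_0,T]$ part decays like $e^{-(t-T)/\varepsilon}$), and uses $\varepsilon^{-1}\int_T^t e^{-(t-s)/\varepsilon}\,ds=1-e^{-(t-T)/\varepsilon}\to1$ to see that the tail contribution lies within $\delta$ of $b_0$ for large $t$. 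There is no genuine obstacle in this corollary; the only mild subtlety is the harmless strengthening of Propositions \ref{prop1}--\ref{prop2} to forcing terms whose range is merely contained in the relevant interval.
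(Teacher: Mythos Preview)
Your argument is correct and follows essentially the same route as the paper: apply Propositions~\ref{prop1}--\ref{prop2} with the interval $L=[b_0-\delta,b_0+\delta]$ for arbitrary $\delta>0$, and note the variation-of-constants formula as a direct alternative. Your write-up is in fact more careful than the paper's one-line sketch, since you make the case distinction on $u(t_\delta)$ explicit and address the harmless ``range equals $L$'' versus ``range contained in $L$'' discrepancy.
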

\begin{proof}
It follows from Proposition \ref{prop2}, as for an arbitrary $\delta>0$, the interval $L$ can be chosen as $[b_0-\delta,b_0+\delta]$.
Also, this claim can be proved directly by using the variation of constant formula for the solution of (\ref{IVP}).
\end{proof}

Now we are in the position to prove Lemma \ref{lemma2}.
Let an initial function $\Psi=(\varphi(s),x_2^0,\ldots,x_n^0)\in\mathbb X$ be given. Suppose that, for some $j \in \{1,\dots,n\}$, $t_{j+1} \in \R, $  and some non-empty interval $A_{j+1}\subseteq \R$ it holds that $x_{j+1}(t) \in A_{j+1}$ for all $t \geq t_{j+1}$ (here we identify $n+1$ with $1$).  We claim that then  there  exists $s_j$ such that
\begin{equation}\label{incl2}
x_j(t)\in F_j(A_{j+1}),  \ t\ge s_j.
\end{equation}
Suppose not, then, using the $j$-th equation of system (\ref{sdde}) and in view of Proposition \ref{prop2}, (ii), one sees that $x_j(t)$ is monotone for all large $t$ with $\lim_{t\to\infty} x_j(t)=x_j^*\ne0.$ By using next $(j-1)$-st equation of system (\ref{sdde}) and Corollary \ref{corol1} one concludes that $\lim_{t\to\infty} x_{j-1}(t)=F_{j-1}(x_j^*)\ne0$ (here we identify $0$ with $n$). Going up along successive equations of system (\ref{sdde}), and calculating $\lim_{t\to\infty} x_k(t)$ for all $k$, one concludes that $x_j^*$ satisfies the iterative equation
$$
x_j^*=F_j\circ F_{j+1}\circ\cdots\circ F_{n}\circ F_1\circ F_2\circ\cdots\circ F_{j-1}(x_j^*):=\hat{F}_j(x_j^*).
$$
Since $\hat{F}_j$ satisfies the negative feedback condition (\ref{nf}) its only fixed point is $x=0$, a contradiction with $x_j^*\ne0$.
Therefore, there exists $s_j\ge0$ such that $x_j(t)\in F_j(A_{j+1})$ for all $t\ge s_j.$

By taking first $A_j:=\R$ for all $j$, we conclude that  there exists $S_1 = \max\{s_j, j =1,\dots,n\}$ such that
$$
x_j(t)\in F_j(\R),  \ \mbox{for all} \ t\ge S_1, \ j =1,\dots,n.
$$
Consequently, if we set $A_j:= F_j(\R)$, we can infer the existence of  $S_2$ such that
$$
x_{j-1}(t)\in F_{j-1}(F_j(\R)) =:A_{j-1},  \ \mbox{for all} \ t\ge S_2, \ j =1,\dots,n.
$$
Applying the same argument repeatedly $n$ times we conclude that there exists $T_1\ge0$ such that the following inclusions hold
\begin{equation}\label{incl3}
x_1(t)\in F(\R), x_{2}(t)\in\hat{F}_{2}(\R),\ldots, x_n(t)\in\hat{F}_{n}(\R)\quad \forall t\ge T_1,
\end{equation}
where $F$ is given by (\ref{im}), and $\hat{F_k}, k=2,3,\dots,n,$ are composite functions defined by
$$
\hat{F_k}:=F_k\circ\dots\circ F_n\circ F_1\circ\dots\circ F_{k-1}.
$$
Furthermore, repeating the same argument  $3n$ times, we find that there exists $T' \ge T_1$ such that
$$
x_1(t)\in F^3(\R) \subset I_0, x_{2}(t)\in\hat{F}_{2}^3(\R)  \subset (F_2\circ\dots\circ F_n)(I_0),\ldots, x_n(t)\in\hat{F}_{n}^3(\R) \subset F_n(I_0) \quad \text{for all}\  t\ge T'.
$$
The set of last inclusions finalizes the proof of  Lemma \ref{lemma2}.
\hfill $\square$

\vspace{2mm}

With the invariant interval $I_0=[\alpha,M_1]$ for $F$ there is a minimal invariant interval $I_*$ associated with it. It is defined as $I_*=\cap_{n\ge0}F^n(I_0)$. Clearly $I_*\subset F(I_*)$.
The interval $I_*$ is the global attractor for the map $F$ on $\mathbb R$ (it can also become the single fixed point $I_*=\{0\}$).

The next lemma deals with non-oscillatory solutions of systems (\ref{sdds}) and (\ref{sdde}).

\begin{lem}\label{L2}
Suppose that a solution $\mathbf x=(x_1,\ldots,x_n)$ to system (\ref{sdds}) is such that one of its components is non-oscillatory:
$x_k(t)\geq0$ (or $x_k(t)\le0$), $x_k(t) \not\equiv 0,$ for all $t \geq T$ and some $k\in\{1,\ldots,n\}$.
Then all other components of the solution $\mathbf x$ are non-oscillatory as well.
Moreover, there exists $T_1 > T$ such that $|x_j(t)|>0$ for all $t\geq T_1$ and  every $j =1, \dots, n$.
\end{lem}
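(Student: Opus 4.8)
The plan is to exploit the cyclic structure of system (\ref{sdds}) together with the sign hypotheses (\ref{pf})--(\ref{nf}), propagating a one-sided sign of one component around the loop and then invoking Corollary \ref{corol1} to upgrade each inclusion to a strict one. First I would observe that it suffices to prove the first assertion for the component $x_{k-1}$ immediately preceding $x_k$ in the cyclic order (indices mod $n$), since then the claim for all remaining components follows by iterating around the cycle, and one full pass returns to $x_k$ itself. So assume, say, $x_k(t)\ge 0$, $x_k(t)\not\equiv 0$, for $t\ge T$; the case $x_k(t)\le 0$ is symmetric.

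Next I would treat the equation for $x_{k-1}$ (taking indices cyclically, with $x_0:=x_n$ and recalling that the last equation carries the delay while all others are instantaneous in the standard form (\ref{sdds})):
$$
x_{k-1}'(t)+\lambda_{k-1}x_{k-1}(t)=f_{k-1}(x_k(t)).
$$
The forcing term $b(t):=f_{k-1}(x_k(t))$ is continuous, and by the feedback sign hypothesis on $f_{k-1}$ it has a definite weak sign for $t\ge T$: if $k-1\in\{1,\dots,n-1\}$ then (\ref{pf}) gives $f_{k-1}(x_k(t))\ge 0$, while if $k-1=n$ then (\ref{nf}) gives $f_{k-1}(x_k(t))=f_n(x_k(t))\le 0$; moreover $b(t)\not\equiv 0$ for $t\ge T$ because $x_k\not\equiv 0$ and $f_{k-1}$ vanishes only at $0$. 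Writing the equation as $\varepsilon u'+u=b$ with $\varepsilon=1/\lambda_{k-1}$ and applying Proposition \ref{prop1} with $L=[0,\infty)$ (or $(-\infty,0]$), one sees that once $x_{k-1}$ enters the relevant half-line it stays there; and since $\lim_{t\to\infty}x_{k-1}(t)=0$ is already known for non-oscillatory solutions, an easy argument with the variation-of-constants formula (or with Proposition \ref{prop2}) shows $x_{k-1}(t)$ does eventually have that definite weak sign. This gives the propagation of non-oscillation to $x_{k-1}$, hence, by cycling, to every component.

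For the strict conclusion, I would argue that a non-oscillatory component cannot have arbitrarily large zeros: suppose $x_j(t)\ge 0$ for $t\ge T'$ (the sign being the one just established, possibly after relabeling) and $x_j(s)=0$ for some $s\ge T'$. Because $x_j\ge 0$ near $s$, necessarily $x_j'(s)=0$ as well, so the $j$-th equation forces $f_{j-1}(x_{j}(\cdot))$... — more precisely, evaluating the relevant differential equation at $s$ gives $0=x_j'(s)+\lambda_j x_j(s)=f_{j-1}(x_{j+1\text{-index}}(\cdot))$, hence the argument of $f_{j-1}$ vanishes at that time, i.e.\ the \emph{next} component around the cycle vanishes there too. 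Chasing this around the loop shows that if one component vanishes at a point $s$ beyond the entrance time, then all components vanish at points clustering near $s$, and a short bootstrap shows they must then be identically zero for all large $t$, contradicting $x_k\not\equiv 0$. Hence there is $T_1>T$ with $|x_j(t)|>0$ for all $t\ge T_1$ and all $j$. The main obstacle I anticipate is the bookkeeping in this last step: one must handle the delay in the $n$-th equation carefully (a zero of $x_n$ at time $s$ forces a zero of the argument $x_1(s-\tau)$, shifting the time), and one must verify that a single interior zero of an eventually-nonnegative $C^1$ solution of a scalar linear equation genuinely forces the forcing term to vanish there and propagates rather than being an isolated touching point — this is where the precise ODE structure, and not just the sign data, is used.
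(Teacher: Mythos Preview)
Your overall strategy—propagate the sign of $x_k$ backward through the cycle via the $(k-1)$-st equation—is exactly what the paper does. However, two points in your execution need correction.

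\textbf{Circularity.} When you write ``since $\lim_{t\to\infty}x_{k-1}(t)=0$ is already known for non-oscillatory solutions,'' you are invoking Corollary~\ref{corol2}, which in the paper is proved \emph{after} Lemma~\ref{L2} and relies on it. At this stage nothing guarantees that $x_{k-1}$ even has a limit. Your parenthetical ``or with Proposition~\ref{prop2}'' is in fact the correct route and does not need the limit at all; you should commit to it and drop the appeal to Corollary~\ref{corol2}.

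\textbf{Strict positivity.} Here your approach diverges from the paper's and runs into trouble. The paper does not first establish weak signs and then upgrade: it introduces Proposition~\ref{prop3}, based on the explicit integral representation
\[
u(t)=u_0\,e^{-\lambda(t-t_0)}+\int_{t_0}^t c(s)\,e^{\lambda(s-t)}\,ds,
\]
which immediately yields a \emph{strict} sign for $x_{k-1}(t)$ for all large $t$ (by a three-case analysis on the sign of $x_{k-1}(T)$). This gives both assertions of the lemma in one pass around the cycle. Your zero-propagation argument, by contrast, is incomplete: from $x_j(s)=x_j'(s)=0$ you correctly deduce $x_{j+1}(s)=0$ (or $x_1(s-\tau)=0$ when $j=n$), but one full turn around the loop only produces $x_j(s-\tau)=0$, then $x_j(s-2\tau)=0$, and so on \emph{backward} in time until you exit the interval $[T',\infty)$—finitely many zeros, not a contradiction with $x_k\not\equiv 0$, and certainly not zeros ``clustering near $s$.'' The bootstrap to ``identically zero for large $t$'' that you allude to would require a separate argument (e.g.\ using the integral formula on an interval between two zeros), at which point you are essentially reproving Proposition~\ref{prop3} anyway. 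It is cleaner to use the variation-of-constants representation from the start.
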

\begin{proof}
The proof of this lemma uses another simple fact about the solution of the initial value problem (\ref{IVP}) which equally applies to an equivalent IVP:
\begin{equation}\label{IVP0}
u^\prime(t)+\lambda u(t)=c(t),\quad u(t_0)=u_0,\quad \lambda>0.
\end{equation}
\begin{proposition}\label{prop3}
The following statements hold for the solution $u(t)$ of the initial value problem (\ref{IVP0}).\newline
(i) If $u_0>0$ and $c(t)\ge0$ then $u(t)>0$ for all $t\ge t_0$;\newline
(ii) If $u_0<0$ and $c(t)\le0$ then $u(t)<0$ for all $t\ge t_0$;\newline
(iii) If $u_0=0$ and $c(t)\ge0$ (or $c(t)\le0$) and $c(t)\not\equiv0$ then there exists $T>t_0$ such that $u(t)>0$ ($u(t)<0$) for all $t>T$.
\end{proposition}
The validity of the statements of Proposition \ref{prop3} is easily seen from the integral representation of the solution $u(t)$ of (\ref{IVP0}):
$$
u(t)=u_0\,\exp\{-\lambda(t-t_0) \}+\int_{t_0}^tc(s)\exp\{\lambda(s-t)\}\,ds,\quad t\ge0.
$$
In case (iii), $T\ge t_0$ is any point where $c(T)\ne0.$

To be definite, assume that the component $x_k$ satisfies $x_k(t)\ge0$ for all $t\ge T\ge t_0$ and some $T$ (and that $x_k(t)\not\equiv0$ for large $t$).
By using $(k-1)$-st equation of system (\ref{sdds}), $x^\prime_{k-1}(t)+\lambda_{k-1} x_{k-1}(t)=f_{k-1}(x_k(t)):=c(t)\ge0$, the positive feedback of $f_{k-1}$, and Proposition \ref{prop3}, one sees immediately that in the case $x_{k-1}(T)>0$ (or $x_{k-1}(T)=0$) the inequality $x_{k-1}(t)>0$ is satisfied for all $t>T_1$ and some $T_1\ge T$. In the case $x_{k-1}(T)<0$ the component $x_{k-1}(t)$ is strictly increasing in some right neighborhood of $t=T$. If there is a point $T_1>T$ such that $x_{k-1}(T_1)=0$ then one applies part (iii) of Proposition \ref{prop3} to conclude that $x_{k-1}(t)>0$ for all $t\ge T_2$ and some $T_2\ge T_1$.
If such first zero of $x_{k-1}(t), t\ge T,$ does not exist then $x_{k-1}(t)<0, \forall t\ge T.$
Thus in either case $x_{k-1}(t)>0$ or $x_{k-1}(t)<0$ for all sufficiently large $t$. By using next the $(k-2)$-nd equation of system (\ref{sdds}) and applying exactly the same reasoning, one concludes that either $x_{k-2}(t)>0$ or $x_{k-2}(t)<0$ eventually. By going up along equations of system (\ref{sdds}), one arrives at the same conclusion for the first component $x_1(t)$. By using now the last equation of system (\ref{sdds}), and the negative feedback assumption on function $f_n$, one sees that Proposition \ref{prop3} applies to the equation $x^\prime_{n}(t)+\lambda_{n} x_{n}=f_{n}(x_1(t-\tau)):=c(t)$ to conclude that $x_{n}(t)>0$ or $x_{n}(t)<0$ eventually. By continuing up along the system, one obtains that the same definite sign property is valid for the remaining components $x_j, j=k,\ldots, n-1$, and moreover $x_k(t)>0$ eventually.
\end{proof}

\begin{cor}\label{corol2}
All components of any non-oscillating solution $\mathbf x=(x_1,\dots,x_n)$ of system (\ref{sdds})
satisfy \newline $\lim_{t\to\infty}x_j(t)=0$, $j=1,\dots,n$.
\end{cor}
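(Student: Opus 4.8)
The plan is to combine Lemma~\ref{L2} with a sign-chasing argument around the cyclic loop that exploits the negativity of the overall feedback. First, since the solution is non-oscillating, Lemma~\ref{L2} shows that each component $x_j$ eventually keeps a constant sign $\sigma_j\in\{+1,-1\}$ on some half-line $[T_1,\infty)$ (the case in which a component is eventually identically zero will be handled by the propagation step below). Thus, for large $t$, each equation of \eqref{sdds} reads $x_j'+\lambda_j x_j=c_j(t)$ where, for $1\le j\le n-1$, the forcing $c_j=f_j(x_{j+1})$ has the constant sign $\rho_j:=\sigma_{j+1}$ by the positive feedback of $f_j$, while $c_n=f_n(x_1(\cdot-\tau))$ has the constant sign $\rho_n:=-\sigma_1$ by the negative feedback of $f_n$.

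The crux is then an elementary parity obstruction: since $\prod_{j=1}^n\rho_j=-\prod_{j=1}^n\sigma_j\neq0$, it is impossible that $\sigma_j=\rho_j$ for every $j$, so there is an index $j^*$ with $\sigma_{j^*}=-\rho_{j^*}$, i.e. $x_{j^*}$ has the sign opposite to the forcing in its own equation. For such $j^*$, the terms $-\lambda_{j^*}x_{j^*}(t)$ and $c_{j^*}(t)$ share the common sign $-\sigma_{j^*}$ for large $t$, so $x_{j^*}'(t)$ has sign $-\sigma_{j^*}$, hence $|x_{j^*}(t)|$ is eventually strictly decreasing and tends to a limit $\ell\ge0$; if $\ell>0$ then $-\lambda_{j^*}x_{j^*}(t)+c_{j^*}(t)$ stays bounded away from $0$ with sign $-\sigma_{j^*}$, driving $x_{j^*}$ beyond every bound in that direction and contradicting the constancy of its sign. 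Therefore $x_{j^*}(t)\to0$. This is the only place where a genuine (though routine) first-order estimate is required — the same kind already packaged in Propositions~\ref{prop1}--\ref{prop3} — and I regard it as the main obstacle; the parity observation itself is short once the standard form \eqref{sdds} with its $n-1$ positive and one negative feedbacks is in place.

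Finally, I propagate the decay around the loop. If $x_{j^*}(t)\to0$, then continuity of $f_{j^*-1}$ together with $f_{j^*-1}(0)=0$ (a consequence of the feedback sign condition) gives $c_{j^*-1}(t)=f_{j^*-1}(x_{j^*}(t))\to0$, and likewise $f_n(x_1(t-\tau))\to0$ in the case $j^*=1$; applying Corollary~\ref{corol1} to the $(j^*-1)$-st equation yields $x_{j^*-1}(t)\to0$. Iterating this implication $n$ times runs through all indices and produces $\lim_{t\to\infty}x_j(t)=0$ for every $j=1,\dots,n$, which is the assertion of the corollary. The degenerate case mentioned above is absorbed here as well: if some $x_k\equiv0$ for $t\ge T$, then $c_{k-1}(t)\to0$ and the same cyclic propagation gives the conclusion.
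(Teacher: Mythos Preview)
Your proof is correct and rests on the same mechanism as the paper's, but the packaging differs in two places worth noting.

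The paper argues by case analysis on the sign pattern: when all components are positive, the last equation (negative feedback) forces $x_n$ to be eventually decreasing; in the mixed-sign case one locates an index $k$ with $x_k>0$ and $x_{k+1}<0$, so that $f_k(x_{k+1})<0$ and again $x_k$ is eventually decreasing. In either case the paper first records that this monotone component has \emph{some} finite limit $x_k^*\ge0$, then applies Corollary~\ref{corol1} repeatedly to compute the limits of all remaining components as iterated images $(1/\lambda_j)f_j(\cdot)$ of $x_k^*$, and finally observes that $x_k^*$ is a fixed point of the cyclic composition $\hat F_k$, which by negative feedback forces $x_k^*=0$.

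Your parity identity $\prod_j\rho_j=-\prod_j\sigma_j$ collapses the case analysis into a single line locating the index $j^*$, and your direct estimate $|x_{j^*}'(t)|\ge\lambda_{j^*}|x_{j^*}(t)|\ge\lambda_{j^*}\ell$ shows $\ell=0$ without passing through the fixed-point equation. The final propagation step via Corollary~\ref{corol1} is identical in both arguments. Your version is a bit more economical; the paper's version has the minor side benefit that the explicit limit formulas and the monotonicity of one component feed directly into the proof of the next result, Corollary~\ref{corol3}.
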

\begin{proof}
Assume first that all the components are positive, $x_j(t)>0, t\ge T, j=1,\dots,n.$ Since $f_n$ satisfies the negative feedback assumption (\ref{nf}) the last equation of system (\ref{sdds}) implies that $x_n$ is eventually decreasing. Set $\lim_{t\to\infty}x_n(t)=x_n^*\ge0$.  By applying Corollary \ref{corol1} one concludes that $\lim_{t\to\infty}x_{n-1}(t)=(1/\lambda_{n-1})f_{n-1}(x_n^*)=:x_{n-1}^*$.
Likewise, $\lim_{t\to\infty}x_{n-2}(t)=(1/\lambda_{n-2})f_{n-2}(x_{n-1}^*)=:x_{n-2}^*$.
In general, for any $k\in\{1,\dots,n-1\}$ one has $\lim_{t\to\infty}x_k(t)=(1/\lambda_{k})f_k(x_{k+1}^*).$ By using the last equation of system (\ref{sdds}) again, one concludes that $\lim_{t\to\infty}x_n(t)=(1/\lambda_n)f_n(x_1^*).$ Therefore, $x_1^*$ is the solution of the iterative equation
$$
x=F(x)=\left(\frac{1}{\lambda_1}f_1\circ \frac{1}{\lambda_2}f_2\circ\ldots\circ \frac{1}{\lambda_n}f_n\right)(x).
$$
Since $F(x)$ satisfies the negative feedback assumption (\ref{nf}), the only solution of the above equation is its fixed point $x=0$.
Therefore, $x_1^*=x_2^*=\ldots=x_n^*=0$.

Assume next that a non-oscillatory solution $\mathbf x$ has both eventually positive and negative components. To be definite, suppose that $x_j(t)>0, j=1,\ldots,k$ and $x_{k+1}(t)<0$ for all sufficiently large $t$ (the case of negative $x_j, 1\le j\le k,$ and $x_{k+1}>0$ is treated analogously). By using the $k$-th equation of the system one sees that $x_k(t)$ is decreasing with $\lim_{t\to\infty}x_k(t)=x_k^*\ge0.$ By using Corollary \ref{corol1} and exactly the same reasoning as above for the case of all positive components, one arrives at the conclusion that $x_k^*$ is the solution (fixed point) of the equation
$$
x=\hat{F}_k(x):=\left(\frac{1}{\lambda_k}f_k\circ \frac{1}{\lambda_{k+1}}f_{k+1}\circ\ldots\circ\frac{1}{\lambda_n}f_n\circ\frac{1}{\lambda_1}f_1\circ\ldots\circ \frac{1}{\lambda_{k-1}}f_{k-1}\right)(x).
$$
Since $\hat{F}_k(x)$ satisfies the negative feedback assumption (\ref{nf}), its only fixed point is $x_k^*=0$, implying that $x_j^*=0$, $j = 1, \dots,n.$
Other subcases are considered similarly.
\end{proof}

\begin{cor}\label{corol3}
If $\mathbf x=(x_1,\dots,x_n)$ is a non-oscillatory solution then each component $x_j, 1\le j\le n,$ satisfies $|x_j^{\,\prime}(t)|>0$ for all $t\ge T$ and some $T>0$.
\end{cor}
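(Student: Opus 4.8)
The plan is to differentiate system (\ref{sdds}) once and apply to the resulting system for the derivatives $(x_1',\dots,x_n')$ the same argument that proves Lemma \ref{L2}. First I would record what the non-oscillation hypothesis gives: by Lemma \ref{L2} and Corollary \ref{corol2}, every component of $\mathbf x$ is eventually of a fixed sign $\sigma_j\in\{-1,1\}$, is nonzero for all $t\ge T_1$ and some $T_1>0$, and satisfies $x_j(t)\to 0$ as $t\to\infty$; in particular no $x_j$ is eventually constant, so $x_j'\not\equiv 0$ on any half-line $[T,\infty)$. Since each $f_j$ is $C^1$ near $0$ and $x_{j+1}(t)\to 0$, the right-hand sides of (\ref{sdds}) are continuously differentiable for all large $t$, so with $y_j:=x_j'$ we get, for $t$ large,
\begin{gather*}
y_j'(t)+\lambda_j y_j(t)=f_j'(x_{j+1}(t))\,y_{j+1}(t),\quad 1\le j\le n-1,\\
y_n'(t)+\lambda_n y_n(t)=f_n'(x_1(t-\tau))\,y_1(t-\tau).
\end{gather*}
Here the coefficients satisfy $f_j'(x_{j+1}(t))\to f_j'(0)=A_j\ne 0$ and $f_n'(x_1(t-\tau))\to A_n\ne 0$, hence are of a constant sign for all large $t$ (positive for $j\le n-1$ and negative for $j=n$, by (\ref{pf})--(\ref{nf})), so the derived system has, for large $t$, exactly the cyclic sign structure of the linearization (\ref{ldds}).

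Next I would show that at least one $y_k$ is eventually of a fixed sign. If $\sigma_j\ne\sigma_{j+1}$ for some $1\le j\le n-1$, then in $x_j'(t)=f_j(x_{j+1}(t))-\lambda_j x_j(t)$ the term $f_j(x_{j+1}(t))$ has sign $\sigma_{j+1}$ (by (\ref{pf}) and $x_{j+1}(t)\ne 0$) while $-\lambda_j x_j(t)$ has sign $-\sigma_j=\sigma_{j+1}$; both are nonzero, so $x_j'(t)$ keeps the constant sign $\sigma_{j+1}\ne 0$ for all large $t$. Otherwise $\sigma_1=\dots=\sigma_n=:\sigma$, and then in $x_n'(t)=f_n(x_1(t-\tau))-\lambda_n x_n(t)$ both $f_n(x_1(t-\tau))$ (by (\ref{nf})) and $-\lambda_n x_n(t)$ have sign $-\sigma$ and are nonzero, so $x_n'(t)$ keeps the constant sign $-\sigma\ne 0$ for all large $t$. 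In either case some $y_k=x_k'$ is eventually of a fixed sign.

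Then I would rerun the argument of the proof of Lemma \ref{L2} on the derived $(y_1,\dots,y_n)$-system; this is legitimate because that proof uses only Proposition \ref{prop3} together with the cyclic sign pattern and never appeals to the one-sided bound (\ref{osb}). Starting from the component $y_k$ supplied above and going once around the cycle, at each stage one applies Proposition \ref{prop3} to the equation for $y_i$ whose forcing term $f_i'(x_{i+1}(t))\,y_{i+1}(t)$ (with the obvious modification when $i=n$) is eventually of a constant sign and, on every half-line, not identically zero — the latter because $y_{i+1}\not\equiv 0$ eventually and $f_i'(x_{i+1}(t))\ne 0$ for large $t$. One concludes that every $y_j$, hence every $x_j'$, is eventually of a fixed sign, i.e. $|x_j'(t)|>0$ for all $t\ge T$ and some $T>0$.

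The delicate point is the sign bookkeeping in the second step: when two consecutive components satisfy $\sigma_j=\sigma_{j+1}$, the sign of $x_j'(t)=f_j(x_{j+1}(t))-\lambda_j x_j(t)$ is not forced by the signs alone, since the two terms on the right compete. The resolution is that these ambiguous configurations cannot all hold simultaneously around the cycle — $\sigma_1=\dots=\sigma_n$ pushes the $n$-th equation, which carries the negative feedback $f_n$, into the unambiguous regime — so at least one derivative is pinned down, after which the linear bootstrap of the third step spreads the conclusion to all components.
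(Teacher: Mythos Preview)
Your argument is correct and gives a valid proof, but it proceeds along a different route from the paper. The paper does not differentiate the system; instead it establishes a new scalar fact, Proposition~\ref{prop4}, about the initial value problem $\varepsilon u'+u=b(t)$: if $b$ is strictly monotone with $b'(t)<0$ and $b(t)\to b_*$, then $u$ is eventually strictly monotone with $|u'(t)|>0$. The paper then applies this proposition cyclically to the original system (\ref{sdds}): from the $n$-th equation one gets $x_n'(t)<0$ (in the all-positive case), which makes $b(t)=f_{n-1}(x_n(t))$ strictly decreasing, so the $(n-1)$-st equation yields $x_{n-1}'(t)<0$, and so on around the loop; other sign patterns are handled analogously.

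Your approach is arguably cleaner in that it recycles Lemma~\ref{L2} and Proposition~\ref{prop3} rather than proving a new auxiliary proposition: after differentiating, the system for $(y_1,\dots,y_n)$ inherits the cyclic sign structure (because $f_j'(x_{j+1}(t))\to A_j\neq 0$), so the very same bootstrap that proves Lemma~\ref{L2} applies verbatim. Your case analysis in the second step---finding one equation where the two terms on the right do not compete---is exactly the same starting observation the paper uses. Both approaches require $f_j\in C^1$ near $0$ (you need it to differentiate; the paper needs it for $b'(t)<0$ in Proposition~\ref{prop4}), so neither is more general. The paper's route has the minor advantage of giving, in each sign configuration, the explicit sign of every $x_j'$; yours gives only eventual strict sign without specifying which, but that is all the corollary claims.
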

\begin{proof}
To prove this we need one more simple fact about the solution $u(t)$ of the initial value problem (\ref{IVP}).
\begin{proposition}\label{prop4}
Suppose that $b(t)$ is strictly decreasing with $b^\prime(t)<0$ and the finite limit\newline
$\lim_{t\to\infty}b(t)=b_*$ exists.
Then the solution $u(t)$  of the initial value problem (\ref{IVP}) is strictly monotone with $|u^{\,\prime}(t)|>0$ for all sufficiently large $t$ with the following specification: \newline
(i) If $u(t_0)\ge b(t_0)$ then $u(t)>b(t)$ and $u^\prime(t)<0$ for all $t\ge t_0$;\newline
(ii) If $b(t_0)>u(t_0)\ge b_*$ then there exists $T>t_0$ such that $u(t)$ is strictly increasing on $[t_0,T]$ and strictly decreasing on $[T,\infty)$.
Moreover $u(t)>b(t)$ and $u^\prime(t)<0$ for all $t>T$;\newline
(iii) If $u(t_0)<b_*$ then either $u(t)$ is strictly increasing with $u^\prime(t)>0$ for all $t\ge t_0$, or there exists a finite $t_1>0$ such that $u(t_1)=b_*$. The latter case is reduced then to part (ii) above.
\end{proposition}
If $u(t_0)>b(t_0)$ then $u^\prime(t_0)<0$. Therefore, $u(t)$ is strictly decreasing in some right neighborhood of $t=t_0$ with $u^\prime(t)<0$ there. We claim that this happens for all $t\ge t_0$. Indeed, one has the inequality $u^\prime(t)<0$ satisfied as long as $u(t)>b(t)$. Suppose that $t_1>0$ is the first time when $u(t_1)=b(t_1)$. Then $u^\prime(t_1)=0$. On the other hand, $u^\prime(t_1)\le b^\prime(t_1)<0$ since $u(t)>b(t)\; \forall t\in[t_0,t_1)$, a contradiction.

If $u(t_0)=b(t_0)$ then $u^{\prime}(t_0)=0$. Since $b^\prime(t_0)<0$ there exists a right neighborhood $(t_0,t_1)$ of $t_0$ such that
$u(t)>b(t)\;\forall t\in(t_0,t_1)$. Exactly by the same reasoning as in the case $u(t_0)>b(t_0)$ above one concludes that $t_1=+\infty$.

If $u(t_0)\in[b_*, b(t_0))$ then $u^\prime(t_0)>0$, so $u(t)$ is strictly increasing in some right neighborhood of $t=t_0$. Since $b(t)$ is decreasing, there exists the first time $t_1>0$ such that $u(t_1)=b(t_1)$. Then $u^\prime(t_1)=0$ and $u(t)>b(t)$ in some open right neighborhood of $t=t_1$. By the very same reason as in the case $u(t_0)>b(t_0)$ above, one concludes that $u(t)>b(t)$ and $u^\prime(t)<0$ for all $t>t_1$.

In the case $u(t_0)<b_*$ one also has $u^\prime(t_0)>0$, so $u(t)$ is increasing in some right neighborhood of $t=t_0$. If $u(t)<b_*\; \forall t\ge t_0$ then $u'(t)>0\; \forall t\ge t_0$. If there exists the first time $t_1$ such that $u(t_1)=b_*$ then the consideration of the case (ii) applies. Therefore $u^\prime(t)<0$ eventually in the latter case.
This completes the proof of Proposition \ref{prop4}.
\hfill $\square$

Analogous statements of the proposition hold valid when $b(t)$ is strictly increasing with $b^\prime(t)>0$ and $\lim_{t\to\infty}b(t)=b_*<\infty$.

Corollary \ref{corol3} now easily follows from Proposition \ref{prop4} applied to equations of system (\ref{sdds}) on each of the steps of the proof of Corollary \ref{corol2}. Indeed, if all the components $x_k(t), 1\le k\le n,$ are positive, then the last equation of the system,
$x_{n}^\prime(t)=-\lambda_n x_n(t)+f_n(x_1(t-\tau)), $  shows that $x_n^\prime(t)<0$ for all sufficiently large $t$. Likewise, since $\lim_{t\to\infty}x_n(t)=0$ by Corollary \ref{corol2}, the second from last equation shows that $x_{n-1}^\prime(t)<0$ for all large $t$. By going up along the system (\ref{sdds}) one concludes that $x_k^\prime(t)<0$ for every $1\le k\le n$ and all sufficiently large $t$. Other subcases are similar.
\end{proof}

For the remainder of the paper, we shall define $\mathbf x$ as a {\em small solution} of system (\ref{sdds}) if
$\displaystyle \lim_{t \to \infty}  \mathbf{ x}(t)e^{\lambda t}=\mathbf0$ for any $\lambda>0$.

\begin{cor}\label{sC}
Under the standing assumption that $f'_k(0)=A_k \not=0, 1\le k\le n,$ system (\ref{sdds}) does not have small non-oscillating solutions.
\end{cor}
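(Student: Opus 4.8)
The plan is a proof by contradiction. Suppose $\mathbf x=(x_1,\dots,x_n)$ is a non-oscillating solution (so by Lemma \ref{L2} each component has a definite sign $\sigma_k\in\{+,-\}$ for $t\ge T_1$ and $|x_k(t)|>0$ there) which is moreover small, so in particular $x_k(t)e^{\lambda_k t}\to 0$ for every $k$. Everything hinges on the functions $y_k(t):=x_k(t)e^{\lambda_k t}$: from \eqref{sdds},
\[
y_k'(t)=e^{\lambda_k t}f_k(x_{k+1}(t))\quad(1\le k\le n-1),\qquad y_n'(t)=e^{\lambda_n t}f_n(x_1(t-\tau)),
\]
and since $x_{k+1}$ (resp.\ $x_1$) is eventually of one sign and $f_k$ (resp.\ $f_n$) has a definite feedback sign, $y_k'$ has a constant sign for $t$ large. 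Hence each $y_k$ is eventually monotone; being monotone with $y_k(t)\to 0$ and $y_k\not\equiv 0$ (Lemma \ref{L2}), the function $|y_k(t)|=|x_k(t)|e^{\lambda_k t}$ is eventually nonincreasing.

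The first step is to pin down the sign pattern. If $\sigma_k=\sigma_{k+1}$ for some $k\le n-1$, then $f_k(x_{k+1})$ has the sign $\sigma_k$ (positive feedback), so $y_k$ is monotone \emph{in the direction of its own sign}; together with $y_k\ne 0$ this forces $|y_k(t)|\ge c>0$ eventually, i.e.\ $|x_k(t)|\ge c\,e^{-\lambda_k t}$, whence $x_k(t)e^{\mu t}\to\infty$ for $\mu>\lambda_k$, contradicting smallness. The same contradiction arises if $\sigma_n=-\sigma_1$, now because $f_n(x_1(\cdot-\tau))$ has the sign $\sigma_n$. Therefore a small non-oscillating solution must have the alternating pattern $\sigma_{k+1}=-\sigma_k$ $(1\le k\le n-1)$ together with $\sigma_n=\sigma_1$; in particular $n$ must be odd, so for even $n$ the corollary is already proved (vacuously).

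In the remaining alternating case all $y_k$ are monotone to $0$, so integrating $y_k'$ over $[t,\infty)$ and taking absolute values (the integrand has constant sign) gives
\[
|x_k(t)|e^{\lambda_k t}=\int_t^\infty e^{\lambda_k s}\,|f_k(x_{k+1}(s))|\,ds,\qquad
|x_n(t)|e^{\lambda_n t}=\int_t^\infty e^{\lambda_n s}\,|f_n(x_1(s-\tau))|\,ds.
\]
Here the standing hypothesis $A_k=f_k'(0)\ne 0$ enters: since $x_{k+1}(s)\to 0$, for large $s$ one has $|f_k(x_{k+1}(s))|\ge\tfrac12|A_k|\,|x_{k+1}(s)|$, and since $|x_{k+1}|$ is nonincreasing, $|x_{k+1}(s)|\ge|x_{k+1}(t+h)|$ for $s\in[t,t+h]$. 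Restricting the integrals to $[t,t+h]$ then yields, for any fixed $h>0$ and all large $t$, the pointwise estimates $|x_k(t)|\ge\tfrac{h}{2}|A_k|\,|x_{k+1}(t+h)|$ for $k\le n-1$ and $|x_n(t)|\ge\tfrac{h}{2}|A_n|\,|x_1(t+h-\tau)|$. Composing them around the cycle gives
\[
|x_1(t)|\ \ge\ C_h\,|x_1(t+nh-\tau)|,\qquad C_h:=\Big(\tfrac{h}{2}\Big)^{n} a>0 .
\]

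The only real obstacle is the sign of the time shift $nh-\tau$: a forward shift makes this inequality useless, since it then bounds $x_1(t)$ from below only by a smaller, future value. I would overcome it using the freedom in $h$: choose $h\in(0,\tau/n)$ small enough that also $C_h<1$, and set $\delta:=\tau-nh\in(0,\tau)$, so $|x_1(t)|\ge C_h\,|x_1(t-\delta)|$ for all large $t$. Iterating this inequality down to a fixed interval of length $\delta$ on which $|x_1|$ is bounded below by some $c_0>0$ (Lemma \ref{L2}), one obtains $|x_1(t)|\ge c_0\,C_h^{\,m(t)}$ with $m(t)\le 1+(t-T_2)/\delta$, hence $|x_1(t)|\ge c_1 e^{-\gamma_0 t}$ with $\gamma_0:=\delta^{-1}\ln(1/C_h)>0$. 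Then $x_1(t)e^{\mu t}\to\infty$ for $\mu>\gamma_0$, contradicting that $\mathbf x$ is small. (Alternatively, keeping $h=\tau/(2n)$, the case $C_h\ge 1$ already forces $|x_1(t)|\to\infty$, contradicting the boundedness of $\mathbf x$.) This contradiction completes the proof.
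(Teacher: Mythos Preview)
Your argument is correct and takes a genuinely different route from the paper's proof. Both begin by contradiction and rely on Corollaries~\ref{corol2} and~\ref{corol3} to know that a non-oscillating solution has strictly monotone components converging to~$0$. After that the two approaches diverge.

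The paper's proof is functional-analytic: it introduces the shifted vector $\hat{\mathbf x}(t)$, uses smallness to produce a sequence $t_j\to\infty$ along which the ratio $|\hat{\mathbf x}(t_j+\tau_n)|/|\hat{\mathbf x}(t_j)|\to 0$, normalizes to obtain functions $\mathbf y_j$ solving asymptotically autonomous linear systems, and then extracts a limit via Arzel\`a--Ascoli and Helly's theorem (weak-$*$ compactness in $L^\infty$). The contradiction comes from showing the weak-$*$ limits $\phi_k$ vanish a.e.\ while $|\mathbf y(-\tau_n)|=1$.

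Your proof is entirely elementary. The sign analysis of $y_k(t)=x_k(t)e^{\lambda_k t}$ is a nice observation: it immediately rules out all sign patterns except the alternating one with $\sigma_n=\sigma_1$, disposing of even $n$ outright. For the remaining (odd~$n$) case, the integral identities and the chain of inequalities $|x_k(t)|\ge \tfrac{h}{2}|A_k|\,|x_{k+1}(t+h)|$ composed around the cycle yield $|x_1(t)|\ge C_h|x_1(t-\delta)|$ with $\delta=\tau-nh>0$, and iterating this gives a finite exponential lower bound $|x_1(t)|\ge c_1e^{-\gamma_0 t}$, contradicting smallness. This avoids compactness arguments entirely.

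What each buys: your approach is shorter, self-contained, and extracts the extra information that the obstruction is purely a parity/sign phenomenon for even~$n$. The paper's approach is blind to the sign pattern and would transfer more readily to cyclic systems where the individual feedback signs are not so rigidly prescribed, or where several delays are present. One small remark: in your alternative ending ``the case $C_h\ge 1$ already forces $|x_1(t)|\to\infty$'', the borderline $C_h=1$ only gives $|x_1|$ bounded below by a positive constant, contradicting $x_1\to 0$ rather than boundedness; but this is irrelevant since your main line (choose $h$ small so that $C_h<1$) covers everything.
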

\begin{proof} Suppose that $\mathbf{x}(t)$ is a small non-oscillating solution.
As we have proved, in such a case, all $|x_j(t)|$ are strictly decreasing on some interval $[T_2-\tau,+\infty)$ and converge to $0$ as $t \to +\infty$ (see Corollaries \ref{corol2} and \ref{corol3}).  Set $\tau_n =\tau/n$ and consider the function
$$
\hat{ \mathbf x}(t)= (\hat x_1(t), \hat x_2(t), \dots, \hat x_n(t)): = (x_1(t), x_2(t+\tau_n), \dots, x_n(t+(n-1)\tau_n)).
$$
Clearly,  $\hat {\mathbf{x}}(t)$ is a small non-oscillating function as well. Set
$$
\phi(t) = \frac{\hat{ \mathbf x}(t+\tau_n)}{|\hat{ \mathbf x}(t)|},  \quad  t \geq T_2.
$$
We claim that $\inf_{t \geq T_2} |\phi(t)| =0$.  Indeed, otherwise there exists
$\rho \in (0,1)$ such that $$|\hat{ \mathbf x}(T_2+(k+1)\tau_n)| \geq \rho |\hat{ \mathbf x}(T_2+ k \tau_n)| \geq \dots \geq \rho^{k+1} |\hat{ \mathbf x}(T_2)| = e^{(\tau_n^{-1}\ln \rho) (k+1)\tau_n} |\hat{ \mathbf x}(T_2)|, \ k=0,1,2 \dots,$$
contradicting the smallness of $\hat{ \mathbf x}(t)$.

All this proves the existence of a sequence $t_j \to +\infty$ such that $\phi(t_j) \to 0$.  Set {
\begin{equation}\label{Bra}
{ \mathbf y}_j(t) = \frac{\hat{ \mathbf x}(t+t_j+\tau_n)}{|\hat{ \mathbf x}(t_j)|},  \quad  t \geq 0 \quad \left[\mbox{hence,} \ y_{j,k}(t) = \frac{x_k(t+t_j+k\tau_n)}{|\hat{ \mathbf x}(t_j)|}\right].
\end{equation}}
We have that $|{ \mathbf y}_j(-\tau_n)|=1 = \max_{t \geq -\tau_n}|\mathbf {y}_j(t)|$,  and $|\mathbf{y}_j(t)|$ are strictly decreasing functions,  while $\mathbf {y}_j(0)= \phi(t_j) \to 0$ as $j \to +\infty$.  {Also, using the expression in brackets in (\ref{Bra}), it is straightforward to verify that} $\mathbf {y}_j(t) = (y_{j,1}(t), \dots, y_{j,n}(t)), \ t \geq 0,$ satisfies the system
\begin{equation}
\label{2a}
\begin{array}{lll}
y_{j,1}^{\prime}(t) + \lambda_1y_{j,1}(t) & = &a_{1,j}(t)y_{j,2}(t-\tau_n)  \\
y_{j,2}^{\prime}(t) + \lambda_2y_{j,2}(t) & = &a_{2,j}(t)y_{j,3}(t-\tau_n)  \\
\dots & \dots & \dots \\
y_{j,n-1}^{\prime}(t) + \lambda_{n-1}y_{j,n-1}(t) & = &a_{n-1,j}(t)y_{j,n}(t-\tau_n)  \\
y_{j,n}^{\prime}(t) + \lambda_ny_{j,n}(t) & = &a_{n,j}(t)y_{j,1}(t-\tau_n),
\end{array}
\end{equation}
where
$$
a_{k,j}(t)=  \frac{f_k(\hat x_{k+1}(t+t_j))}{\hat x_{k+1}(t+t_j)}.
$$
Since $x_{k+1}(+\infty)=0$ for each $k$,  the functions $a_{k,j}(t)$
are  bounded on $\R_+$ and converging uniformly to $f_k'(0)$ on the positive semi-axis:
 $$\lim_{j \to +\infty}\sup_{t \geq 0}|a_{k,j}(t) - f_k'(0)| =0.$$
Consequently, $\mathbf{y}_j'(t)$ are uniformly bounded on $\R_+$ and therefore  there exists a subsequence $\mathbf y_{j_l}$ such that  $\mathbf y_{j_l}(t)$ converges, uniformly on compact subsets of $\{-\tau_n, -\tau_n/2\} \cup \R_+$, to  a continuous function $\mathbf y(t) = (y_1(t), \dots, y_n(t))$ (in what follows, in order to simplify the notation we will write
$\mathbf y_{j}(t)$ instead of $\mathbf y_{j_l}(t)$).
Note here that the function $\mathbf y(t)$ is defined only on the set $\{-\tau_n, -\tau_n/2\} \cup \R_+$.
Since  each function $|\mathbf{y}_j(t)|$ is strictly decreasing on $[-\tau_n,+\infty)$, we conclude that
$|\mathbf y(t)|$ is  non-increasing on the aforementioned interval. Since $\mathbf{y}(0) = \lim_{j \to +\infty} \mathbf {y}_j(0)= \lim_{j \to +\infty} \phi(t_j) = 0$, this implies that $\mathbf{y}(t) = 0$ for all $t \geq 0$.
In addition, $|\mathbf{y}(-\tau_n)| = 1$.

Due to the Helly theorem \cite[p. 107]{Lax}, we can assume that $y_{j,k}$ $*$-weakly converges on $[-\tau_n,0]$ to some $\phi_k \in L^{\infty}[-\tau_n,0]$. 
Here we are treating $y_{j,k}$  as elements of the standard Lebesgue space  $L^{\infty}[-\tau_n,0]$ of all essentially bounded, real valued, Lebesgue measurable functions on $[-\tau_n,0]$.
When defining $*$-weak convergence in $L^{\infty}[-\tau_n,0]$, we  consider this space as  the dual space of $L^{1}[-\tau_n,0]$.
{As a consequence (see e.g. \cite[Proposition 3.13 (iv)]{BR})}, for each measurable subset $E\subseteq [-\tau_n,0]$ and for each sequence $\{g_j\}$ of continuous functions converging uniformly on $[-\tau_n,0]$ to the function $g$, we have that
$$
\lim_{j \to +\infty}\int_{E}y_{j,k}(s)g_j(s)ds = \int_{E}\phi_{k}(s)g(s)ds.
$$
Therefore, integrating both sides of the $k$-th equation of (\ref{2a}) on $[0,t] \subseteq  [0,\tau_n]$ and then taking the limit as $j \to +\infty$,
we find that
$$
0= \lim_{j\to \infty }\left[y_{j,k}(t) -y_{j,k}(0)   + \lambda_k\int_0^{t}y_{j,k}(s)ds\right] = \lim_{j\to \infty } \int_0^{t}a_{k,j}(s)y_{j,k+1}(s-\tau_n)ds=  f'_k(0)\int^{t}_0\phi_{k+1}(s-\tau_n)ds.
$$
Thus  $\int^{t}_0\phi_{k}(s-\tau_n)ds=0$ for all $t \in [0, \tau_n]$ and therefore $\phi_k =0$ almost everywhere on $[-\tau_n,0]$
for all $k$.  Since $|y_{j,k}(t)|$ are decreasing, this implies that  $y_k(-\tau_n/2)=0$  for every $k$:
$$|y_k(-\tau_n/2)|= \lim_{j \to +\infty} |y_{j,k}(-\tau_n/2)| \leq \lim_{j \to +\infty}\frac{2}{\tau_n}\left|\int_{-\tau_n}^{-\tau_n/2}y_{j,k}(s)ds\right|=\frac{2}{\tau_n}\left|\int_{-\tau_n}^{-\tau_n/2}\phi_{k}(s)ds\right| =0.
$$
Hence, after integrating equations of (\ref{2a}) on $[-\tau_n,0]$, we find that
$$
\lim_{j \to +\infty}\int_{-\tau_n}^0a_{k,j}(s)y_{j,k+1}(s-\tau_n)ds = - \lim_{j \to +\infty} y_{j,k}(-\tau_n).
$$
Recall here that, due to our choice of  $\mathbf y_{j_l}(t)$, the sequences $\{y_{j,k}(-\tau_n)\}$ and  $\{y_{j,k}(-\tau_n/2)\}$ are converging.
Since $|y_{j,k}(t)|$ are positive and decreasing functions
and each $a_{k,j}$ preserves its sign on $[-\tau_n,0]$, this yields that
$$
|y_{j,k}(-\tau_n)|=
\int_{-\tau_n}^0|a_{k,j}(s)| |y_{j,k+1}(s-\tau_n)|ds \geq
$$
$$
\int_{-\tau_n}^{-\tau_n/2}|a_{k,j}(s)| |y_{j,k+1}(s-\tau_n)|ds+  |y_{j,k+1}(-\tau_n)|\int_{-\tau_n/2}^{0}|a_{k,j}(s)| ds.
$$
Taking $\limsup$ on both sides of the above inequality gives
$$
\limsup_{j \to +\infty}\int_{-\tau_n}^{-\tau_n/2}|a_{k,j}(s)| |y_{j,k+1}(s-\tau_n)|ds  +0.5 \tau_n |f'_{k}(0)| |y_{k+1}(-\tau_n)|\leq   |y_{k}(-\tau_n)|.
$$
{On the other hand}, after integrating the equations of (\ref{2a}) on $[-\tau_n,-\tau_n/2]$, we get the following:
$$
\lim_{j \to +\infty}\int_{-\tau_n}^{-\tau_n/2}|a_{k,j}(s)| |y_{j,k+1}(s-\tau_n)|ds = |y_{k}(-\tau_n)|.
$$
Thus $y_{k+1}(-\tau_n)=0$ for every $k$ and, consequently, $\mathbf y(-\tau_n) =0$, which is a contradiction.
\end{proof}

\begin{theorem}\label{T1T}
Assume that each $f_j$ has the first derivative  $f_j'$ which is H\"older continuous with exponent $\alpha$
 in some neighborhood of ~$0$ and that  $f_j'(0) \not=0, j=1,\ldots,n$.  If, in addition, the characteristic equation (\ref{ChE}) has no real  roots, then every solution of system (\ref{sdds}) is oscillating about the zero solution.
\end{theorem}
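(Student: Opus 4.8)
The plan is to argue by contradiction. Suppose system (\ref{sdds}) has a nontrivial non\nobreakdash-oscillating solution $\mathbf x(t)=(x_1(t),\dots,x_n(t))$; I will extract from it a negative real root of (\ref{ChE}), contradicting the hypothesis. By Lemma \ref{L2} together with Corollaries \ref{corol2} and \ref{corol3}, there is a $T>0$ such that for $t\ge T$ every component keeps a fixed sign $\sigma_j\in\{\pm1\}$, each $|x_j(t)|$ is strictly decreasing with $x_j'(t)\neq 0$, and $x_j(t)\to 0$. Passing to the symmetrized solution $\hat{\mathbf x}$ of the proof of Corollary \ref{sC} (so that every equation carries the delay $\tau_n=\tau/n$) and writing $f_j(x)=A_jx+g_j(x)$ with $|g_j(x)|\le L|x|^{1+\alpha}$ near $0$ (this is where H\"older continuity of $f_j'$ first enters), the system becomes $\hat x_j'(t)+\lambda_j\hat x_j(t)=A_j(t)\,\hat x_{j+1}(t-\tau_n)$ with $A_j(t)=A_j+g_j(\hat x_{j+1}(t-\tau_n))/\hat x_{j+1}(t-\tau_n)\to A_j$. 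Introducing the logarithmic rates $\rho_j(t)=-\hat x_j'(t)/\hat x_j(t)>0$ for large $t$, dividing the $j$-th equation by $\hat x_j(t)$ and telescoping around the cycle yields the identity
\[
\exp\Big(\int_{t-\tau_n}^{t}\textstyle\sum_{j=1}^{n}\rho_j(s)\,ds\Big)=\frac{\prod_{j=1}^{n}(\lambda_j-\rho_j(t))}{\prod_{j=1}^{n}A_j(t)},
\]
whose denominator tends to $\prod_j A_j=-a$.

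The decisive dichotomy rests on Corollary \ref{sC}. If $\sum_j\rho_j(t)$ is unbounded on $[T,\infty)$, then $\prod_j|\hat x_j(t)|$ decays faster than every exponential; propagating this super\nobreakdash-exponential decay around the cycle by the variation-of-constants formula shows that $\mathbf x$ is then a small solution, contradicting Corollary \ref{sC}. Hence $\sum_j\rho_j(t)$, and therefore each $\rho_j(t)$, is bounded. One then upgrades this to genuine exponential decay of every component: no rate $\rho_j(t)$ can tend to $0$ along a sequence, for otherwise — tracking the forcing through the chain, and using that $z=0$ is not a root of (\ref{ChE}) since $\prod_j\lambda_j+a>0$ — all components would have to decay at a common vanishing rate with consecutive signs satisfying $\sigma_j\sigma_{j+1}=\mathrm{sign}(A_j)$ for every $j$, which is impossible since the product of these relations over the cycle reads $1=-1$. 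Consequently $c\,e^{-Mt}\le|\hat x_j(t)|\le C\,e^{-\mu t}$ for some $0<\mu\le M$ and large $t$, which makes the perturbation absolutely integrable, $\sum_j\int^{\infty}|A_j(t)-A_j|\,dt<\infty$, because $|A_j(t)-A_j|\le L|\hat x_{j+1}(t-\tau_n)|^{\alpha}\le L'e^{-\alpha\mu t}$; here the positivity of the H\"older exponent $\alpha$ is essential.

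The last step is an asymptotic-integration argument for the linear delay system (\ref{ldds}): a solution of the perturbed system that is not a small solution must be asymptotic, as $t\to\infty$, to a nonzero solution $\mathbf y(t)$ of (\ref{ldds}) itself. Since $\mathbf x\to0$, so does $\mathbf y$, hence $\mathbf y$ is a nonzero decaying solution of the linear autonomous system, i.e. a finite superposition $\sum_k p_k(t)e^{z_kt}$ over roots $z_k$ of (\ref{ChE}) with $\Re z_k<0$, and the long-time behavior of $\mathbf y$, and therefore of $\mathbf x$, is governed by the roots of largest real part occurring in it. A dominant complex-conjugate pair would force each $x_j(t)$ to change sign for arbitrarily large $t$, contradicting sign-definiteness; hence the dominant root is real, i.e. (\ref{ChE}) has a negative real root — the desired contradiction. (Equivalently, boundedness of the rates makes the family $\mathbf x(\cdot+t_k)/|\mathbf x(t_k)|$ equicontinuous, so a subsequence converges to a bounded, nonzero, eventually sign-definite — hence non-oscillating — solution of the linear system (\ref{ldds}), whose existence is well known to force a real root of the characteristic equation.)

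The step I expect to be the main obstacle is the third one: carrying out the asymptotic integration when the linear part (\ref{ldds}) is not hyperbolic — for $a=a_k$ in the notation of Lemma \ref{pure imagi} there are characteristic roots on the imaginary axis — so that the clean dichotomy ``small solution versus asymptotic to a nonzero decaying solution of (\ref{ldds})'' must be justified with care, with the H\"older summability of the remainder used in an essential way; establishing the two-sided exponential bounds on the components is the supporting technical point.
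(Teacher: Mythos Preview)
Your overall architecture matches the paper's: argue by contradiction; Lemma \ref{L2} and Corollaries \ref{corol2}--\ref{corol3} give sign-definite, strictly monotone components tending to $0$; establish exponential decay; H\"older continuity yields $f_j(x_{j+1}(t-h_j))=(f_j'(0)+O(e^{-\gamma\alpha t}))x_{j+1}(t-h_j)$; an asymptotic-integration result then forces $\mathbf x(t)=e^{b_l t}(D_1\cos(c_l t+\psi_1)+o(1),\dots)$ for a root $b_l+ic_l$ of (\ref{ChE}), and $c_l\neq 0$ produces the oscillation that contradicts sign-definiteness.

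The difference is in how you reach exponential decay, and your route carries genuine gaps. The claim ``$\sum_j\rho_j(t)$ unbounded $\Rightarrow$ $\prod_j|\hat x_j(t)|$ decays super-exponentially'' does not follow: unboundedness of $\sum_j\rho_j$ along a sequence does not force $\int_T^t\sum_j\rho_j(s)\,ds$ to grow superlinearly, and your telescoping identity only links the integral over a window of length $\tau_n$ to the instantaneous rates. The lower-bound step is also incomplete: the sign contradiction $1=\prod_j\sigma_j\sigma_{j+1}=\prod_j\operatorname{sign}(A_j)=-1$ needs $\lambda_j-\rho_j(t)>0$ for \emph{every} $j$ simultaneously, and you have not shown that $\rho_j(t_k)\to 0$ for one index propagates to all indices along the same sequence. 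More to the point, neither a dichotomy via Corollary \ref{sC} nor a lower exponential bound is needed at this stage.

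The paper's route to exponential decay is one line. Since the overall feedback is negative, $\prod_j x_j(t)f_j(x_{j+1}(t-h_j))<0$ for large $t$, so not all factors $x_k(t)f_k(x_{k+1}(t-h_k))$ are positive; for the $k$ where it is negative one has $(|x_k|)'+\lambda_k|x_k|<0$, hence $|x_k(t)|\le Ce^{-\lambda_k t}$ directly. Variation of constants then propagates this around the cycle to $|\mathbf x(t)|=O(e^{-\gamma t})$ with $\gamma\ge\min_j\lambda_j/2$. No two-sided bound, no rates $\rho_j$, no telescoping identity.

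For the step you rightly flag as the crux, the paper does not improvise: it invokes \cite[Proposition 7.2]{FA} (Mallet-Paret's asymptotic representation for asymptotically autonomous delay systems with an exponentially decaying perturbation), together with Lemma \ref{scr}, Claim \ref{prop0} and Corollary \ref{sC}, to obtain $\mathbf x(t)=e^{b_l t}(D_1\cos(c_l t+\psi_1)+o(1),\dots,D_n\cos(c_l t+\psi_n)+o(1))$. Your parenthetical compactness ending (limits of $\mathbf x(\cdot+t_k)/|\mathbf x(t_k)|$ yielding a bounded non-oscillating solution of (\ref{ldds})) is a legitimate alternative once the upper exponential bound is in hand; your first formulation, however---that the limit is a \emph{finite} superposition $\sum_k p_k(t)e^{z_k t}$---already presupposes precisely the representation that Mallet-Paret's theorem supplies.
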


\begin{proof} On the contrary, suppose that  the characteristic equation has no real  roots and  there
exists a non-trivial solution ${\mathbf x}(t)$ with a non-oscillating component. Then this component is also non-trivial and
thus, in view of Lemma \ref{L2} and its corollaries, all components $|x_j(t)|, j=1,\ldots,n,$ are strictly decreasing and converge
to $0$ at $+\infty$.

With the notation $h_j =0$ for $j=1,2,\dots, n-1$, and $h_n=\tau$, we will compare
the signs of $x_j(t)$ and $ f_j(x_{j+1}(t-h_{j}))$.
Suppose for a moment that $x_j(t)f(x_{j+1}(t-h_j)) > 0$ for all $j$ and large $t$.
Then
$$
0 < x_1(t)f(x_{2}(t-h_1)) x_2(t)f(x_{3}(t-h_2))  \cdot \dots \cdot x_n(t)f(x_{1}(t-h_n)) = \prod_{j=1}^n (x_j(t))^2 \prod_{j=1}^n \frac{f_j(x_{j+1}(t-h_j))}{x_{j+1}(t)} <0,
$$
a contradiction.  Thus there exists some $k$ such that $x_k(t)f_k(x_{k+1}(t-h_k)) <0$ for all large $t$.
This means that
$$
(|x_k(t)|)' + \lambda_k |x_k(t)| <0, \quad t \gg 1.
$$
As a consequence, there exists $t_0$ such that
$$
|x_k(t)| \leq e^{-\lambda_k(t-t_0+h_{k-1})} |x_k(t_0-h_{k-1})|, \quad  t\geq t_0-h_{k-1}.
$$
In particular,  $x_k(t)$ converges exponentially to $0$.
Therefore, since  $|f_{k-1}(x)| \leq 2|f'_{k-1}(0)| |x|$ for all $x$ from some open neighborhood of $0$, one has the estimate
$$
|f_{k-1}(x_k(t-h_{k-1}))| \leq 2|f_{k-1}'(0)|e^{-\lambda_k(t-t_0)} |x_k(t_0-h_{k-1})|, \quad  t\geq t_0.
$$
Set $\lambda= \min\{\lambda_j/2, \ j =1, \dots, n\}$.
After applying the variation of parameters formula to the $(k-1)$-st equation of (\ref{2a}), we obtain that
$$
|x_{k-1}(t)|e^{\lambda t} \leq  e^{(\lambda -\lambda_{k-1})t} \left( |x_{k-1}(t_0)|e^{\lambda_{k-1}t_0}
+ 2|f_{k-1}'(0)|\int_{t_0}^te^{\lambda_{k-1}u}e^{-\lambda_k(u-t_0)} |x_k(t_0-h_{k-1})|du \right) = o(1)
$$
as $t \to +\infty$. This shows that
$x_{k-1}(t)$ is also converging exponentially to $0$.
Clearly, repeating the same argument $n-2$ times more, we find  that  the convergence
${\mathbf x}(t) \to 0, \ t \to +\infty$,  is of   some finite exponential rate $-\gamma \leq - \lambda$, and therefore
$$
f_j(x_{j+1}(t-h_j)) = (f'_j(0) + O(x_{j+1}(t-h_j)^\alpha))x_{j+1}(t-h_j) = (f'_j(0) + O(e^{-\gamma \alpha t}))x_{j+1}(t-h_j), \quad t \to +\infty.
$$
Invoking now \cite[Proposition 7.2]{FA} as well as Lemma \ref{scr}, Claim \ref{prop0} and Corollary \ref{sC}, we conclude that there exist  a root $\lambda_l$  of the characteristic equation  \eqref{ChE}  and real numbers
$b_l:= \Re \lambda_l, c_l:=\Im \lambda_l, D_i, \psi_i$ satisfying  inequalities $b_l \leq - \gamma$,  $|D_1| + \dots + |D_n| >0$,  such that
$$
\mathbf{x}(t) = e^{b_l t}(D_1\cos(c_lt+\psi_1) +o(1), \dots, D_n\cos(c_lt+\psi_n)+o(1)), \ t \to +\infty.
$$
Then, since $c_l=\Im \lambda_l \not=0$,   the solution  $\mathbf{x}(t)$ should oscillate around $0$, a contradiction.
\end{proof}


\section{Existence of nontrivial non-oscillatory solutions}
\label{nosc}

It is well known that even when the  linear system $x'(t) = Lx_t$ of autonomous differential equations with delay  possesses  a non-oscillating solution,  the perturbed asymptotically autonomous  linear system
\begin{equation}
\label{pel}
x'(t)= (L+A(t))x_t, \quad A(+\infty) =0,
\end{equation} can still have all its  solutions oscillating.  In 1995, B. Li provided the following  example of such an equation \cite{BL}:
 $$
 x'(t) + \left(\frac 1 e + \frac{1}{t}\right)x(t-1) =0, \quad t \geq 2.
 $$
 As we will show in this section such irregular behavior of the latter equation is due to the fact that the characteristic equation $z+ e^{-1-z}=0$
 of the associated linear limit  equation has  a unique real (negative) root $z=-1$ of the multiplicity $m_0 = 2$ while the weighted
 perturbation $t^{m_0-1} \cdot (1/t)$ does not belong to  the space $L_1(\R_+)$.  The same reason also explains why this
 phenomenon still persists if the perturbation $t^{-1}$ is replaced with a smaller perturbation  $t^{-2}\sin^2 \pi t$,
  see \cite{ES, Pituk} for further  examples and  references.  Since asymptotically autonomous systems of form  (\ref{pel}) are crucial for the analysis of oscillatory/non-oscillatory  behavior of solutions around the steady states, the above example shows that the ``linearized non-oscillation criterion" for our main system needs a careful justification. We are doing this work below, in the proof of our main result,  Theorem \ref{TTT}.
As a byproduct of our analysis, we obtain the following non-oscillation result for system (\ref{pel}).
\begin{theorem}\label{Tdr}
Suppose that the linear autonomous system $x'(t) = Lx_t$ has an eigenfunction $x(t) =  e^{\lambda t}p $ with
the negative eigenvalue $\lambda$ of multiplicity $m$ and the vector $p = (p_1,p_2,\dots,p_n)$  such that $p_j \not =0$ for all $1\le j\le n$.
Suppose also that there are no complex eigenvalues with the same real part $\lambda$  and that $|A(t)| \leq q(t), \ t \geq t_0,$ where $q$ is a decreasing function such that $q(+\infty)=0$ and $q(t)t^{m-1}$ is
integrable on $[t_0, +\infty)$. Then equation (\ref{pel}) has a nonzero non-oscillating solution.
\end{theorem}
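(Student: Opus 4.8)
The plan is to use a Lyapunov--Perron type construction in the phase space of the delay system. Write (\ref{pel}) in the inhomogeneous form $x'(t)=Lx_t+h(t)$ with $h(t):=A(t)x_t$, and let $T(t)$, $t\ge0$, be the solution semigroup of the autonomous equation $x'(t)=Lx_t$ on $\mathbb{X}=C([-\tau,0],\R^n)$. Using the spectral decomposition of $\mathbb{X}$ induced by $L$ (see \cite{Hale}), split $\mathbb{X}=\mathbb{X}_u\oplus\mathbb{X}_c\oplus\mathbb{X}_s$, where $\mathbb{X}_c$ is the generalized eigenspace of $\lambda$, so $\dim\mathbb{X}_c=m$ and $\mathbb{X}_c$ contains the honest eigenvector $\phi(s)=e^{\lambda s}p$; $\mathbb{X}_u$ is the (finite-dimensional) sum of the generalized eigenspaces whose real part exceeds $\lambda$; and $\mathbb{X}_s$ is the complementary $T$-invariant subspace. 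Since there are no eigenvalues with real part exactly $\lambda$, there is a spectral gap $\delta>0$, and the associated projections $P_c,P_u,P_s$ commute with $T(t)$ and satisfy, for a suitable $C>0$,
$$\|T(t)P_c\|\le C(1+|t|)^{m-1}e^{\lambda t}\ (t\in\R),\quad \|T(t)P_u\|\le Ce^{(\lambda+\delta)t}\ (t\le0),\quad \|T(t)P_s\|\le Ce^{(\lambda-\delta)t}\ (t\ge0);$$
the factor $(1+|t|)^{m-1}$ appears because the nilpotent part on $\mathbb{X}_c$ has index at most $m=\dim\mathbb{X}_c$.

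Next, fix $t_1\ge t_0$ (to be chosen large) and work in the weighted Banach space $\mathcal{B}=\{x\in C([t_1-\tau,\infty),\R^n):\ \|x\|_*:=\sup_{t\ge t_1-\tau}e^{-\lambda t}|x(t)|<\infty\}$. Define on $\mathcal{B}$ the affine operator
$$(\mathcal{F}x)_t=T(t-t_1)\phi+\int_{t_1}^t T(t-u)P_sX_0\,A(u)x_u\,du-\int_t^{\infty} T(t-u)(P_c+P_u)X_0\,A(u)x_u\,du,\qquad t\ge t_1,$$
extended to $[t_1-\tau,t_1]$ through the segment $(\mathcal{F}x)_{t_1}$; here $X_0$ is the kernel of the variation-of-constants formula for delay equations (see \cite{Hale}), and integrating the $\mathbb{X}_c$- and $\mathbb{X}_u$-parts backward from $+\infty$ amounts to prescribing the asymptotic (``terminal'') data of those components. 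Using $|A(u)|\le q(u)$ and $|x_u|\le C\|x\|_*e^{\lambda u}$ (here $\lambda<0$), the three estimates above show all integrals converge: the $\mathbb{X}_u$- and $\mathbb{X}_s$-terms are bounded by $\tfrac{C}{\delta}q(t_1)e^{\lambda t}\|x\|_*$ (and are in fact $o(e^{\lambda t})$ as $t\to\infty$, on splitting the integration at $t/2$), while the decisive center term is controlled by
$$C\|x\|_*e^{\lambda t}\int_t^{\infty}(1+u-t)^{m-1}q(u)\,du\ \le\ C\|x\|_*e^{\lambda t}\int_t^{\infty}s^{m-1}q(s)\,ds\ =:\ C\|x\|_*e^{\lambda t}\varepsilon(t),$$
where $\varepsilon(t)\downarrow0$ precisely because $q(s)s^{m-1}$ is integrable and $q$ is decreasing. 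Hence $\mathcal{F}$ maps $\mathcal{B}$ into itself; since $\mathcal{F}x-\mathcal{F}\tilde x$ obeys the same bounds with $\|x-\tilde x\|_*$ in place of $\|x\|_*$, the Lipschitz constant of $\mathcal{F}$ is at most $C(\tfrac{q(t_1)}{\delta}+\varepsilon(t_1))$, so for $t_1$ large $\mathcal{F}$ is a contraction and the Banach fixed point theorem yields a unique $x\in\mathcal{B}$.

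A fixed point of $\mathcal{F}$ is a solution of (\ref{pel}) on $[t_1,\infty)$ (differentiate the identity, using the properties of $X_0$). Projecting the fixed point relation and using that $\phi$ is an honest eigenvector, so $T(s)\phi=e^{\lambda s}\phi$, we obtain $P_cx_t=e^{\lambda(t-t_1)}\phi-\int_t^\infty T(t-u)P_cX_0A(u)x_u\,du=e^{\lambda(t-t_1)}\phi+o(e^{\lambda t})$, while $P_ux_t=o(e^{\lambda t})$ and $P_sx_t=o(e^{\lambda t})$ by the same bounds; evaluating the history segments at $s=0$ gives $x(t)=e^{\lambda(t-t_1)}p+o(e^{\lambda t})$, i.e.\ $e^{-\lambda t}x(t)\to e^{-\lambda t_1}p\ne\mathbf{0}$. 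Therefore $x\not\equiv\mathbf{0}$, and since $\lambda\in\R$ and every $p_j\ne0$, each component $x_j(t)$ eventually keeps the constant sign of $p_j$; thus $x$ is a nonzero non-oscillating solution of (\ref{pel}).

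The \emph{main obstacle} is twofold. First, the center estimate: the whole point is to bound the $\mathbb{X}_c$-correction by $o(e^{\lambda t})$ rather than merely by a polynomially growing prefactor times $e^{\lambda t}$, and this is exactly where the weighted integrability hypothesis $q(t)t^{m-1}\in L_1$ is indispensable; it also dictates that this part of the correction must be integrated backward from $+\infty$ (integrating it forward from $t_1$ would leave a growing prefactor and destroy the sign control). Second, one must set up the variation-of-constants formula with the non-continuous kernel $X_0$ together with the spectral projections carefully enough that a fixed point of $\mathcal{F}$ is literally a classical solution of (\ref{pel}) and that $P_c,P_u,P_s$ commute with $T(t)$ throughout; this is standard material (see \cite{Hale}) but must be invoked with some care. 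The remaining points — convergence of the improper integrals, self-mapping of $\mathcal{B}$, and the contraction estimate — are routine once the three semigroup bounds are in place.
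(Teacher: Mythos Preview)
Your proposal is correct and follows essentially the same route as the paper's proof: both set up a Lyapunov--Perron integral equation by integrating the stable part forward and the center-plus-unstable parts backward from $+\infty$, use the integrability of $q(t)t^{m-1}$ to control the polynomially growing center piece, and close with the Banach contraction principle. The only cosmetic difference is that the paper first makes the change of variables $y(t)=e^{\lambda t}(p+z(t))$ and works with bounded $z$ via the Bellman--Cooke decomposition $Z(t)=\sum_j e^{\mu_jt}P_j(t)+Q(t)+Z_-(t)$ of the fundamental matrix, whereas you work directly in the weighted space with the Hale semigroup/projection formalism and the kernel $X_0$.
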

The proof of Theorem~\ref{Tdr} essentially mimics  the main part of the proof of Theorem \ref{TTT} and therefore it is given in Appendix.
\begin{theorem}\label{TTT}
Assume that each $f_j$ is $C^{2}$-smooth  in some neighborhood of \ $0$,  $f_j'(0) \not=0$, and the characteristic equation (\ref{ChE}) has a real (hence, negative)  root $\lambda_-$. Then  system (\ref{sdds})  has at least one non-oscillating non-zero solution on some interval $[T, +\infty)$.
\end{theorem}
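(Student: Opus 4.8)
The plan is to realize the required solution as a small perturbation of the real eigensolution $\mathbf p(t)=e^{\lambda_- t}\mathbf c$ of the linearized system \eqref{ldds}, pinned down by a fixed-point argument that enforces ``strongly stable'' asymptotics at $+\infty$.

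\emph{The guiding eigensolution.} Since $\lambda_-$ solves \eqref{ChE}, $A_j=f_j'(0)\neq0$, and $\lambda_-+\lambda_j\neq0$ for every $j$ (otherwise $P_n(\lambda_-)=0\neq-a$), \eqref{ldds} has a solution $\mathbf p(t)=e^{\lambda_- t}\mathbf c$, $\mathbf c=(c_1,\dots,c_n)$, with the $c_j$ fixed up to a common scalar by $c_j(\lambda_-+\lambda_j)=A_jc_{j+1}$ $(j<n)$ and $c_n(\lambda_-+\lambda_n)=A_nc_1e^{-\lambda_-\tau}$; in particular every $c_j\neq0$. It will suffice to construct a solution $\mathbf x=(x_1,\dots,x_n)$ of \eqref{sdds} on some $[T,+\infty)$ with $x_j(t)/(c_je^{\lambda_- t})\to1$ as $t\to+\infty$ for each $j$: such an $\mathbf x$ is nonzero and eventually of a constant sign in every component, hence non-oscillating (and then, by Lemma~\ref{L2} and Corollaries~\ref{corol2}--\ref{corol3}, monotone with $\mathbf x(t)\to0$).

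\emph{The fixed-point scheme.} Write $f_j(y)=A_jy+g_j(y)$; $C^2$-smoothness gives $|g_j(y)|\le C|y|^2$ near $0$, so along any $\mathbf x$ with $|x_j(t)|\le2|c_j|e^{\lambda_- t}$ one has $|g_j(x_{j+1}(t-h_j))|\le C'e^{2\lambda_- t}=o(e^{\lambda_- t})$ (here $h_j=0$ for $j<n$, $h_n=\tau$). This quadratic gain is precisely what spares the construction the pathology in B.~Li's example: even a double root $\lambda_-$ only yields the weight $t\,e^{\lambda_- t}$, against which $t\,e^{2\lambda_- t}$ is still integrable. Split $\{1,\dots,n\}=S_+\sqcup S_-$, $S_\pm=\{\,j:\ \pm(\lambda_j-|\lambda_-|)>0\,\}$ (equality impossible, again because $\lambda_-+\lambda_j\neq0$), and define, via the variation-of-constants formula applied to each scalar line of \eqref{sdds}, the operator $\mathcal F=(\mathcal F_1,\dots,\mathcal F_n)$,
\[
(\mathcal F\mathbf x)_j(t)=c_je^{\lambda_- T}e^{-\lambda_j(t-T)}+\int_T^te^{-\lambda_j(t-r)}f_j\bigl(x_{j+1}(r-h_j)\bigr)\,dr,\qquad j\in S_+,
\]
\[
(\mathcal F\mathbf x)_j(t)=-\int_t^{+\infty}e^{-\lambda_j(t-r)}f_j\bigl(x_{j+1}(r-h_j)\bigr)\,dr,\qquad j\in S_-.
\]
A fixed point of $\mathcal F$ is, after differentiation, a solution of \eqref{sdds} on $[T,+\infty)$. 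Integrating forward (from $T$, with initial value matching $\mathbf p$) when $e^{-\lambda_j t}=o(e^{\lambda_- t})$, and \emph{backward} (from $+\infty$) otherwise, removes the slowly decaying homogeneous modes and forces $\mathbf x$ onto the strongly stable set, preventing it from inheriting the decay of a complex root with real part in $(\lambda_-,0)$ (such roots exist in general). I would look for the fixed point in a closed bounded convex ``tube'' $\mathcal K=\{\mathbf x\in C([T-\tau,+\infty),\R^n):\ |x_j(t)-c_je^{\lambda_- t}|\le\rho_je^{\mu t}\ \text{for}\ t\ge T\}$, where $\mu$ decays slightly faster than $\lambda_-$ ($2\lambda_-<\mu<\lambda_-$ and $\mu>-\lambda_j$ for $j\in S_+$), $T$ is large, and the component radii $\rho_j>0$ are chosen below; equicontinuity of $\mathcal F\mathbf x$ (read off the equation itself) plus the exponential weight gives relative compactness, so Schauder's theorem applies once $\mathcal F(\mathcal K)\subseteq\mathcal K$ (and $\mathcal F$ is even a contraction after rescaling the $j$-th coordinate by $\rho_j$).

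\emph{The main obstacle: invariance of $\mathcal K$.} The quadratic remainders contribute only an $o(e^{\mu t})$ error, absorbed by enlarging $T$. The real issue is the linear cross-coupling: estimating $(\mathcal F\mathbf x)_j-p_j$ in terms of $x_{j+1}-p_{j+1}$ and using $A_jc_{j+1}=c_j(\lambda_-+\lambda_j)$ shows that one application of $\mathcal F$ multiplies the $j$-th weighted radius by $\kappa_j(\mu)=\bigl(|\lambda_-+\lambda_j|/|\mu+\lambda_j|\bigr)e^{-\mu h_j}$ (times an extra constant $e^{\lambda_-\tau}$ when $j=n$), so the cyclic product $\Pi(\mu)=\prod_{j=1}^n\kappa_j(\mu)$ decides whether a consistent choice of radii exists. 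A direct computation gives $\Pi(\lambda_-)=1$ and $\left.\tfrac{d}{d\mu}\log\Pi\right|_{\mu=\lambda_-}=-\bigl(\tau+\sum_{j=1}^n(\lambda_-+\lambda_j)^{-1}\bigr)=H'(\lambda_-)/a$, with $H(z)=P_n(z)e^{\tau z}$ as in Lemma~\ref{lem_real}. For the real root $\lambda_-$ nearest to $0$ one always has $H'(\lambda_-)\ge0$ (since $H>-a$ on $(\lambda_-,0)$ while $H(\lambda_-)=-a$); if $H'(\lambda_-)>0$, then choosing $\mu$ slightly below $\lambda_-$ gives $\Pi(\mu)<1$, and the cyclic inequalities $\kappa_j(\mu)\rho_{j+1}+\varepsilon_j\le\rho_j$ have positive solutions $\rho_j$, whence $\mathcal F(\mathcal K)\subseteq\mathcal K$. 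If instead $H'(\lambda_-)=0$ (so, by Lemma~\ref{scr}, $\lambda_-$ is a multiple, necessarily real, root of \eqref{ChE} of some multiplicity $m\ge2$ — the borderline case $a=a_0$ is of this type), I would replace the weight by $t^{m-1}e^{\lambda_- t}$ and close the estimate at the next order, the bound $|g_j|\le C|y|^2$ still dominating this weight. A fixed point $\mathbf x\in\mathcal K$ then satisfies $x_j(t)=c_je^{\lambda_- t}(1+o(1))$, hence is nonzero and non-oscillating on $[T,+\infty)$, which proves the theorem; invoking in addition Lemma~\ref{scr}, Claim~\ref{prop0}, Corollary~\ref{sC} and \cite[Proposition~7.2]{FA}, exactly as in the proof of Theorem~\ref{T1T}, one obtains the sharp asymptotics $\mathbf x(t)=e^{\lambda_- t}(D_1+o(1),\dots,D_n+o(1))$ with $(D_1,\dots,D_n)\neq0$.
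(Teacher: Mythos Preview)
Your approach is genuinely different from the paper's, and in the generic (simple root) case it is a nice, more elementary argument. The paper makes the change of variables $y(t)=e^{\lambda_- t}(p+z(t))$ and then treats the resulting equation for $z$ as a full $n$-dimensional delay system: it decomposes the fundamental matrix $Z(t)$ of the shifted linear system into an unstable part $\sum_{j\in J}e^{\mu_j t}P_j$, a center part $Q(t)$ (a matrix polynomial of degree $m-1$, $m$ being the multiplicity of $\lambda_-$), and an exponentially decaying remainder $Z_-(t)$; the integral equation integrates the first two pieces backward from $+\infty$ and the last forward from $\sigma$, and Banach contraction is applied in a ball of bounded functions. Your scheme bypasses the fundamental matrix entirely by exploiting the cyclic scalar structure: you apply variation of constants line by line, split the indices according to the sign of $\lambda_j+\lambda_-$, and reduce invariance of the tube to a cyclic system of linear inequalities governed by $\Pi(\mu)$. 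The identity $\Pi(\mu)=a/|H(\mu)|$ (with $H(z)=P_n(z)e^{\tau z}$) that you implicitly use makes the link to the characteristic equation transparent, and when $H'(\lambda_-)>0$ you get $\Pi(\mu)<1$ for $\mu$ just below $\lambda_-$, so the fixed-point argument closes. This is a neat alternative that avoids the spectral machinery.

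The gap is in the multiple-root case $H'(\lambda_-)=0$. Since $(\log\Pi)''(\mu)=\sum_j(\mu+\lambda_j)^{-2}>0$, the function $\log\Pi$ is strictly convex, so if $(\log\Pi)'(\lambda_-)=0$ then $\lambda_-$ is its \emph{minimum} and $\Pi(\mu)>1$ for every $\mu\neq\lambda_-$ near $\lambda_-$; equivalently, $|H(\mu)|\le a$ throughout a neighborhood of $\lambda_-$. Hence no choice of $\mu\in(2\lambda_-,\lambda_-)$ gives a contraction, and the cyclic inequalities $\kappa_j(\mu)\rho_{j+1}+\varepsilon_j\le\rho_j$ have no positive solution. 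Your proposed fix, replacing the weight by $t^{m-1}e^{\lambda_- t}$, does not help: a direct computation of $\int e^{-\lambda_j(t-r)}(r-h_j)^{m-1}e^{\lambda_-(r-h_j)}\,dr$ shows that the leading coefficient (after dividing by $t^{m-1}e^{\lambda_- t}$) is again $\kappa_j(\lambda_-)$, so the effective product is $\Pi(\lambda_-)=1$ with $O(1/t)$ corrections of indeterminate sign---you are exactly at the contraction boundary, not inside it. The phrase ``close the estimate at the next order'' does not indicate how to extract a genuine contraction here. This is precisely the place where the paper's fundamental-matrix decomposition pays off: the polynomial block $Q(t)$ absorbs the degeneracy, and the integral operator built from $\int_t^\infty Q(t-s)M_1(s)\,ds$ is small because $M_1(s)=O(e^{\lambda_- s})$ beats any polynomial. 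To repair your argument in this borderline case you would need a different mechanism (for example, working in the variable $z=e^{-\lambda_- t}y-p$ and using at least the center projection of the shifted system), not merely a heavier weight.
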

\begin{proof} We can assume that $\lambda_-$ is the largest real root of multiplicity $m \geq 1$. Due to Lemma \ref{scr}, each complex eigenvalue $\mu_j$ with $\Re \mu_j > \lambda_-$ is simple.  Note that there exists a unique eigenvalue having $\lambda_-$ as the real part (which is
$\lambda_-$ itself).

With $\Lambda$ denoting the diagonal matrix $\Lambda:= {\rm diag}\{\lambda_1, \dots, \lambda_n\}$, the nonlinear system can be written as
\begin{equation}
\label{3na}
y'(t) + \Lambda y(t) = L(y_1(t-\tau), y_2(t),\dots,y_n(t)) + N(y_1(t-\tau), y_2(t),\dots,y_n(t)),
\end{equation}
where its linearization about $0$ is of the form $y'(t) + \Lambda y(t) = L(y_1(t-\tau), y_2(t),\dots,y_n(t))$:
\begin{equation}
\label{3la}
\begin{array}{lll}
y_{1}^{\prime}(t) + \lambda_1y_{1}(t) & = &f'_{1}(0)y_{2}(t),  \\
y_{2}^{\prime}(t) + \lambda_2y_{2}(t) & = &f'_{2}(0)y_{3}(t),  \\
\dots & \dots & \dots \\
y_{n-1}^{\prime}(t) + \lambda_{n-1}y_{n-1}(t) & = &f'_{n-1}(0)y_{n}(t),  \\
y_{n}^{\prime}(t) + \lambda_ny_{n}(t) & = &f'_{n}(0)y_{1}(t-\tau).
\end{array}
\end{equation}
The higher order term $N(u)= N(u_1,\dots,u_n)$ satisfies
\begin{equation}\label{NeS}
|N(u)| \leq K |u|^{2}, \ |N(u)-N(v)| \leq K(|u|+|v|) |u-v|, \ \ |u|, |v| \leq r, \quad \mbox{for some} \  r, K>0.
\end{equation}
When $y(s) =  e^{\lambda_- s}p = e^{\lambda_- s}(p_1, \dots, p_n)$ is an eigenfunction for system (\ref{3la}),
then the following holds
$$
(\lambda_- + \lambda_j)p_j = f'_j(0)p_{j+1}, \ j=1,\dots,n-1; \quad (\lambda_- + \lambda_n)p_n = f'_n(0)p_{1}e^{-\lambda_- \tau}.
$$
In particular, all $p_j, \ j =1, \dots, n,$ are non-zero real numbers. From now on, we will fix them, and will assume that $r$ is small enough
compared with $p$, e.g. $r < 0.1 \min |p_j|$.

The  change of variables $y(t) = e^{\lambda_- t}(p +z(t))$ transforms (\ref{3na}) into
\begin{equation}
\label{3nac}
z'(t) + (\Lambda +\lambda_- I)z(t) = L(z_1(t-\tau)e^{-\lambda_-\tau}, z_2(t),\dots,z_n(t)) + M_1(t, z_1(t-\tau), z_2(t),\dots,z_n(t)),
\end{equation}
where
$$
M_1(t, z_1, z_2,\dots,z_n) = e^{-\lambda_- t}N(e^{\lambda_-(t-\tau)}(p_1+z_1), \dots,e^{\lambda_-t}(p_n+z_n)).
$$
Clearly, the eigenvalues of the linear system
\begin{equation}
\label{3lac}
z'(t) + (\Lambda +\lambda_- I)z(t) = L(z_1(t-\tau)e^{-\lambda_-\tau}, z_2(t),\dots,z_n(t))
\end{equation}
can be obtained from the eigenvalues of (\ref{3la}) by adding $-\lambda_-$.  In particular, they contain $0$ and
a finite set {\color{red} $\{\mu_j: j \in J\subset\mathbb N\}$} (possibly, empty) of simple complex eigenvalues $\mu_j$ with positive real parts:
$\mu_j = \alpha_j + i\beta_j$,  $\alpha_j >0$, $\beta_j \not=0$.
This implies that the fundamental matrix solution $Z(t), \ t \geq 0,$ for system (\ref{3lac}) can be written in the form
$$
Z(t) = \sum_{j\in J} e^{\mu_j t}P_j  + Q(t) + Z_-(t),
$$
where $P_j$ are $n\times n$- complex valued matrices, $Q(t)$ is a matrix polynomial of the degree $m-1$ and
$|Z_-(t)| \leq de^{-\gamma t}, t \geq 0$, for some positive $d$ and $\gamma$.
We recall that the fundamental matrix solution solves the initial value problem $Z(s)=0, \ s \in [-\tau,0), \ Z(0) = I$ for system (\ref{3lac}), see \cite{Hale}.  Note also that each matrix-valued function from the set $\{\sum_{j\in J} e^{\mu_j t}P_j, t \in \R; \ Q(t), t \in \R;  \ Z_-(t), t \geq 0\}$ is an individual  solution of (\ref{3lac}).
See \cite[Chapter 6]{BC} (and especially Section 6.7 (Exercise 3) and Section 6.8 (Theorem 6.7 and Exercise 1)) for more details concerning the above listed properties of $Z(t)$.\footnote{In \cite{BC}, the fundamental matrix  is called the kernel matrix  and the notation  $K(t)$ is used there  instead of $Z(t)$.}
System (\ref{3lac}) can be written in a shorter form $z'(t) = {\mathcal L}z_t$ (or $z'(t)= {\mathcal L}z(t+\cdot))$ where the operator ${\mathcal L}:C([-\tau,0],\R^n) \to \R^n$ and
$z(t+\cdot)=z_t \in C([-\tau,0],\R^n)$ are defined by $z_t(s) = z(t+s), s \in [-\tau,0]$, and
$$
{\mathcal L}\phi =  -(\Lambda +\lambda_- I)\phi(0) + L(\phi_1(-\tau)e^{-\lambda_-\tau}, \phi_2(0),\dots,\phi_n(0)), \ \phi(s) = (\phi_1(s), \dots, \phi_n(s)).
$$
Set now {$K_0:= 4K(|p|+|p|^2)e^{-2\lambda_-\tau}$} and take {$\rho < \min \{0.25, r\}$.} Let  $\sigma$ be sufficiently large to satisfy {$|u+p|e^{\lambda_- (t-\tau)} \leq r$ } for $t \geq \sigma$, $|u| \leq r$, as well as
\begin{equation}\label{K0}
K_0\left(  \sum_{j\in J} \int_t^{+\infty }e^{\Re \mu_j (t-s)}\|P_j\| e^{\lambda_- s}ds + \int_t^{+\infty } \|Q(t-s)\| e^{\lambda_- s}ds   +  \int_\sigma^t\|Z_-(t-s)\|  e^{\lambda_- s}ds \right) < \rho < r, \ t \geq \sigma.
\end{equation}
{For $t \geq \sigma$ and $|u|, |v| \leq r < |p|$, we also have from (\ref{NeS}) that
\begin{eqnarray*}
& & |M_1(t, u_1, u_2,\dots,u_n)-  M_1(t, v_1, v_2,\dots,v_n)|
\\
& &= e^{-\lambda_- t}|N(e^{\lambda_-(t-\tau)}(p_1+u_1), \dots,e^{\lambda_-t}(p_n+u_n))-  N(e^{\lambda_-(t-\tau)}(p_1+v_1), \dots,e^{\lambda_-t}(p_n+v_n))|
\\
& &
\leq 4K|p|e^{-2\lambda_-\tau}e^{\lambda_- t}|u-v| \leq K_0e^{\lambda_- t}|u-v|,
\\
& & |M_1(t, u_1, u_2,\dots,u_n)| \leq Ke^{-\lambda_- t}|(e^{\lambda_-(t-\tau)}(p_1+u_1), \dots,e^{\lambda_-t}(p_n+u_n))|^{2}
\\
& &  \leq
 Ke^{ \lambda_- t}|(e^{-\lambda_-\tau}(p_1+u_1), \dots,p_n+u_n)|^{2} \leq  4Ke^{ \lambda_- t}e^{-2\lambda_-\tau}|p|^{2} \leq  K_0e^{ \lambda_- t}.
\end{eqnarray*}}
As we can see, the nonlinearity $M_1$ is uniformly  dominated by an integrable function on $\R_+$.

 {Our next goal is to prove that, for sufficiently large $\sigma$,  system (\ref{3nac}) has a solution $z:[\sigma+\tau, +\infty) \to \R^n$ such that $|z_j(t)| \leq 0.1 \min_{k}|p_k|$,  $t \geq \sigma+\tau$,  for each $j$.}  Clearly, this implies that the solution $y(t) = e^{\lambda_- t}(p +z(t)),$  {$ t > T$,}  of the original system (\ref{3na}) does not oscillate
around $0$.

To this end, for $t \geq \sigma$, consider the integral equation
\begin{equation}\label{cal}
z(t) = \mathcal A z(t): =  -\sum_{j\in J} \int_t^{+\infty }e^{\mu_j (t-s)}P_j M_1(s)ds - \int_t^{+\infty} Q(t-s)  M_1(s)ds
 + \int_\sigma ^t Z_-(t-s) M_1(s)ds, 
\end{equation}
where the abbreviation $M_1(s)= M_1(s, z_1(s-\tau), z_2(s),\dots,z_n(s))$ is used.
We will assume that  $z(s) \equiv 0$ on $[\sigma-\tau, \sigma)$.
Suppose that this equation has a solution $z_*: [\sigma, +\infty) \to \{u: |u|\leq r\}$ (observe that we do not claim that $z_*(\sigma) =0$, this property is not relevant for our further analysis).  Then it can be verified that $z_*(t), \ t \geq \sigma+ \tau$, also satisfies system (\ref{3nac}) (alternatively,  this fact  can be deduced from considerations in   \cite[p. 172]{Hale}).
Indeed, after differentiating equation (\ref{cal}) with respect to $t \geq \sigma+\tau$, we obtain  that
\begin{eqnarray*}
z'(t) &=& (\sum_{j\in J} P_j +Q(0)+Z_-(0))M_1(t)  -\sum_{j\in J} \int_t^{+\infty }(e^{\mu_j (t-s)}P_j)' M_1(s)ds\\
      &-& \int_t^{+\infty} Q'(t-s)  M_1(s)ds + \int_\sigma ^t Z_-'(t-s) M_1(s)ds\\
      &=& M_1(t)  -\sum_{j\in J} \int_t^{+\infty }({\mathcal L}e^{\mu_j (t+\cdot-s)}P_j) M_1(s)ds\\
      &-& \int_t^{+\infty} ({\mathcal L}Q(t+\cdot-s))  M_1(s)ds + \int_\sigma^t ({\mathcal L}Z_-(t+\cdot-s)) M_1(s)ds\\
      &=& M_1(t)  + {\mathcal L}\left(-\sum_{j\in J} \int_{t+\cdot}^{+\infty }e^{\mu_j (t+\cdot-s)}P_j M_1(s)ds\right.\\
      &-& \left.\int_{t+\cdot}^{+\infty} Q(t+\cdot-s)  M_1(s)ds + \int_\sigma ^{t+\cdot} Z_-(t+\cdot-s) M_1(s)ds\right)\\
      &=& {\mathcal L}z_t+ M_1(t).
\end{eqnarray*}
In the fist line  of the above chain of equalities, we invoke the relation $\sum_{j\in J} P_j +Q(0)+Z_-(0) = Z(0) =I$. Next, in  order to prove the equality
in the fifth line, we use another property of the fundamental matrix solution:  $Z(s)=0$ for $s \in [-\tau,0)$.

Now, take a bounded continuous function $u(t)$, $|u(t)| \leq r,  \ t \geq \sigma,$ and set $u(t) \equiv 0$ for $t < \sigma$.
The set $\frak B$ of all such functions $u$ equipped with the distance $\rho(u,v) = \sup_{s \geq \sigma}|u(s)-v(s)|$ is a complete metric space.
Then, for all $ t \geq \sigma,$ one has
$$
|\mathcal A u(t)| \leq  K_0\left(  \sum_{j\in J} \int_t^{+\infty }e^{\Re \mu_j (t-s)}\|P_j\| e^{\lambda_- s}ds + \int_t^{+\infty } \|Q(t-s)\| e^{\lambda_- s}ds   +  \int_\sigma^t\|Z_-(t-s)\|  e^{\lambda_- s}ds \right) < \rho < r.
$$
Thus the operator ${\mathcal A}: {\frak B} \to \frak{B}$ is well defined. Moreover, ${\mathcal A}$ is a contraction in view of the following relation:
{
$$
|\mathcal A v(t)- \mathcal A u(t)| \leq  \rho \sup_{t \geq \sigma} |\left(v_1(t-\tau)- u_1(t-\tau), \dots, v_n(t)-u_n(t)\right)| \leq 0.5 \sup_{t \geq \sigma} |v(t)- u(t)|,
 \ t \geq \sigma.
$$
}In this way,  the Banach contraction principle guarantees the existence of the required $z_*\in \frak B$.
\end{proof}

\begin{cor}
For system (\ref{sdds}) with any fixed $\tau>0,$ positive $\lambda_1,\lambda_2, \dots, \lambda_n$ and $a=-A_1\cdot A_2\cdot\ldots\cdot A_n>0$ exactly one of the following two options is possible:
\begin{enumerate}
\item[(i)]
for any $a>0$, all its solutions oscillate about zero;
\item[(ii)]
there exists $a_0>0$ such that for any $a$ satisfying $0<a<a_0$ system \eqref{sdds} has a non-oscillating solution, while for $a>a_0$ all its solutions oscillate about zero.
\end{enumerate}
\end{cor}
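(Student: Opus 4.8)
The plan is to deduce the corollary directly from the real-root dichotomy for the characteristic equation, Lemma~\ref{lem_real}, together with the two ``linearized'' criteria established earlier: Theorem~\ref{T1T}, which says that whenever \eqref{ChE} has no real root then every solution of \eqref{sdds} oscillates about $0$, and Theorem~\ref{TTT}, which says that whenever \eqref{ChE} has a real (necessarily negative) root then \eqref{sdds} possesses a nonzero non-oscillating solution. Since Theorem~\ref{TTT} is stated for $f_j\in C^2$ near $0$ while Theorem~\ref{T1T} only needs $f_j'$ to be H\"older continuous near $0$, I would state and prove the corollary under the assumption that $f_j\in C^2$ in a neighborhood of $0$ with $f_j'(0)\ne0$, which suffices for both (a $C^2$ function has a locally Lipschitz, hence H\"older, derivative). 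I would also note at the outset that the standing sign assumptions force $A_k=f_k'(0)>0$ for $1\le k\le n-1$ and $A_n<0$, so $a=-A_1\cdots A_n>0$, and that for fixed $\tau,\lambda_1,\dots,\lambda_n$ every value $a>0$ is realized by some admissible choice of $f_1,\dots,f_n$; hence the quantifier ``for any $a>0$'' in the statement ranges over the same parameter set $\mathbb R_+$ used throughout Section~\ref{roots}.

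The core step is a two-case split according to Lemma~\ref{lem_real}, which asserts that exactly one of its options (i)--(ii) holds for the given $\tau,\lambda_1,\dots,\lambda_n$. If option (i) holds, then \eqref{ChE} has no real root for any $a>0$; applying Theorem~\ref{T1T} for each such $a$ gives that all solutions of \eqref{sdds} oscillate about zero, which is precisely option (i) of the corollary. If option (ii) holds, then there is $a_0>0$ such that \eqref{ChE} has a negative real root for $0<a\le a_0$ and none for $a>a_0$; by Theorem~\ref{TTT}, for $0<a\le a_0$ (in particular for $0<a<a_0$) system \eqref{sdds} has a nonzero non-oscillating solution, while by Theorem~\ref{T1T}, for $a>a_0$ all solutions oscillate. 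This is option (ii) of the corollary. Mutual exclusivity of the two options is immediate, either from the mutual exclusivity of the two options of Lemma~\ref{lem_real}, or directly because option (ii) asserts the existence, for some $a$, of a non-oscillating solution, which is incompatible with ``all solutions oscillate for every $a>0$''.

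I do not expect a genuine obstacle here: everything substantive is already contained in Lemma~\ref{lem_real} and Theorems~\ref{T1T} and~\ref{TTT}, so the corollary is essentially a packaging statement. The two points that do require a moment's attention are (a) keeping the smoothness hypotheses of the two theorems simultaneously in force, which is handled by the blanket $C^2$ assumption above; and (b) the boundary value $a=a_0$, at which Lemma~\ref{lem_real}(ii) still keeps a negative real root of \eqref{ChE}, so Theorem~\ref{TTT} in fact yields a non-oscillating solution there as well. I would therefore remark that the threshold in option (ii) of the corollary may be taken closed, $0<a\le a_0$, the statement as written being slightly conservative at the single endpoint $a=a_0$.
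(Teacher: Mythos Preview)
Your proposal is correct and follows essentially the same approach as the paper, which simply states that the corollary follows from Lemma~\ref{lem_real} together with Theorems~\ref{T1T} and~\ref{TTT}. Your added remarks on the $C^2$ hypothesis and the boundary case $a=a_0$ are accurate and make the deduction more explicit than the paper does.
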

\begin{proof}
The application of Lemma~\ref{lem_real} and Theorems \ref{T1T}, \ref{TTT} implies the statement of the corollary.
\end{proof}

\section{Discussion}
\label{disc}

\subsection{Monotonicity, oscillation, and instability}

It is a well known fact that  the value of $a_1$ in Lemma \ref{pure imagi} defines the stability boundary for both systems (\ref{sdds}) and (\ref{ldds}). For any $0<a<a_1$, the zero solution $(x_1,\dots,x_n)=(0,\dots,0)$ is asymptotically stable, while for all $a>a_1$ it is unstable. The value $a=a_1$ is critical for system (\ref{sdds}). While linear system (\ref{ldds}) is still stable (but not asymptotically stable), nonlinear system (\ref{sdds}) can be either stable or unstable depending on specific local shapes of nonlinearities $f_j, 1\le j\le n$ around zero.

It is an interesting question of relative location of the values of $a_1$ from Lemma \ref{pure imagi} and $a_0$ from Lemma \ref{lem_real} (when the latter exists). By Corollary \ref{corol1a},
in the  scalar $n=1$ and the two-dimensional $n=2$ cases we have that $a_0<a_1$  (see also  \cite{adH79a,GyoLad91,HadTom77}).
This in particular means that the instability of the zero solution in either system (\ref{sdds}) or system (\ref{ldds}) implies that all their solutions oscillate.
However, it is not the case for the general $n$. When $n=3$ both possibilities $a_0<a_1$ or $a_0>a_1$ can happen for system (\ref{ldds}) or (\ref{sdds}) \cite{IvaBLW04}. This in particular shows that the zero solution to system (\ref{ldds}) can be unstable, and still the system can have a non-oscillatory exponentially decaying to zero solution.

In this section, for every $n\ge4$ we construct an example of characteristic equation (\ref{ChE}) which has at least  one negative root and at least one pair of complex conjugate roots with  positive real part. In our construction we use the following simple perturbation argument. Suppose that an analytic function $F(z)$ has exactly $m$ zeros $z_1,\ldots,z_m$ inside a connected bounded domain $D\subset\mathbb C$ with simple boundary (counting with the multiplicities). Then there exists $\varepsilon_0>0$ such that for every analytic function $F_1(z)$ satisfying $|F_1(z)|\le\varepsilon_0\; \forall z\in D$ the perturbed analytic function $F(z)+F_1(z)$ also has exactly $m$ zeros on the domain $D$. Moreover, if $w_1,\ldots,w_m$ are respective zeros of $F(z)+F_1(z)$ with $\max_{j}|z_j-w_j|=\varepsilon_1$ then $\varepsilon_1\to0$ as $\varepsilon_0\to0$. This statement can be proved by using Rouch\'e's Theorem (for further details and proof see e.g. \cite{Con78}).

Let $k\ge 3$ be given. The polynomial $P_n(z)=(z+1)z^k$ of degree $n=k+1$ has the negative root $z_0=-1$ and the zero root $z_1=0$ of multiplicity $k$.
For the perturbed polynomial $P_{n,p}(z)=(z+1)z^k+p$, there exists $p_0>0$ such that for every $0<p\le p_0$ the following properties hold:
\begin{itemize}
\item[(i)]
there exists a negative root $z_{0,p}$ such that $|z_{0,p}+1|\le\varepsilon_0$ with $\varepsilon_0\to 0$ as $p\to0$;
\item[(ii)] there exists a pair of complex conjugate roots $z_{1,2}=\alpha\pm i\,\beta$ with  positive real part $\alpha>0$.
\end{itemize}
The validity of part (i) is seen immediately. Since $z_1$ and $z_2$ in part (ii) are close to zero for small $p$, they are also close to the roots of the polynomial  $z^k+p$. Therefore they are close to the $k$th roots of $p(-1),$ $z_l=\sqrt[k]{p}\,\exp\{i\pi(2l-1)/k\}, 1\le l\le k,$ which always contain a conjugate pair with  positive real part for every $k\ge3$.

Consider next the analytic function
$$
F=F(z,p,\tau,k,\lambda_1,\dots,\lambda_k)=(z+1)(z+\lambda_1)\cdot\ldots\cdot(z+\lambda_k)+p\,\exp\{-\tau\, z\}.
$$
If $\tau=\lambda_1=\ldots=\lambda_k=0$ then $F=P_{n,p}(z)=(z+1)z^k+p$. Therefore if $p>0$ is such that $P_{n,p}(z)$ has a negative root and a pair of complex conjugate roots with positive real part then there exist $\lambda_0>0$ and $\tau_0>0$ such that for all $0\le\lambda_l\le\lambda_0, 0\le l\le k,$
and $0\le\tau\le\tau_0$, the analytic function  $F(z,p,\tau,k,\lambda_1,\dots,\lambda_k)$ also has a nearby negative root and a nearby pair of complex roots with  positive real part. Therefore, we have shown the following
\begin{proposition}\label{prop6}
There exists $p_0>0$ such that for every $p\in(0,p_0]$ there are $\lambda_0>0$ and $\tau_0>0$ such that for an arbitrary choice of
$\lambda_l\in[0,\lambda_0], 1\le l\le k,$ and $\tau\in[0,\tau_0]$, the corresponding characteristic equation (\ref{ChE})
$$
(z+1)(z+\lambda_1)\cdot\ldots\cdot(z+\lambda_k)+p\,\exp\{-\tau\, z\}=0
$$
has a negative real solution and a pair of complex conjugate solutions with positive real part.
\end{proposition}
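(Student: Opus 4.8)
The plan is to carry out the perturbation argument already outlined just before the statement, applying the Rouch\'e-type stability lemma recalled at the head of this section in two stages: first to the purely polynomial object $P_{n,p}(z)=(z+1)z^{k}+p$, and then to the full analytic function $F(z,p,\tau,k,\lambda_{1},\dots,\lambda_{k})=(z+1)(z+\lambda_{1})\cdots(z+\lambda_{k})+p\,e^{-\tau z}$, which reduces to $P_{n,p}$ when $\lambda_{1}=\dots=\lambda_{k}=\tau=0$.

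For the first stage I would fix two disjoint closed disks: $\overline{D}_{-}$ centred at $-1$ and contained in the open left half-plane, and $\overline{D}_{0}$ centred at $0$. The polynomial $(z+1)z^{k}$ has the simple root $-1$ inside $D_{-}$ and the root $0$ of multiplicity $k$ inside $D_{0}$, so since the perturbation $F_{1}\equiv p$ has modulus $p$ on any fixed disk, the lemma gives, for all small $p>0$, exactly one root of $P_{n,p}$ in $D_{-}$ and exactly $k$ roots in $D_{0}$. The root in $D_{-}$ is real, since a non-real root would force a conjugate companion in the same disk against simplicity, and it is negative because $D_{-}$ lies in the left half-plane. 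For the $k$ roots near $0$, the substitution $z=p^{1/k}w$ shows they are asymptotic to $p^{1/k}e^{i\pi(2l-1)/k}$, $l=1,\dots,k$, as $p\to 0^{+}$; for $k\ge 3$ the choice $l=1$ has argument $\pi/k\in(0,\pi/2)$, hence positive real part, and the real coefficients of $P_{n,p}$ supply the conjugate partner. So there is $p_{0}>0$ such that for every $p\in(0,p_{0}]$ the polynomial $P_{n,p}$ has a negative real root $z_{0,p}$ and a conjugate pair $\alpha\pm i\beta$ with $\alpha>0$, $\beta\ne 0$.

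For the second stage I would fix such a $p$ and form the bounded domain $D=D_{-}\cup B(\alpha+i\beta,\varepsilon)\cup B(\alpha-i\beta,\varepsilon)$, choosing $\varepsilon>0$ small enough that the two $\varepsilon$-balls are disjoint, lie in the open right half-plane, and avoid the real axis. On $D$ the difference $F-P_{n,p}=(z+1)\bigl[(z+\lambda_{1})\cdots(z+\lambda_{k})-z^{k}\bigr]+p\,(e^{-\tau z}-1)$ is bounded, in modulus, by a quantity that tends to $0$ as $(\lambda_{1},\dots,\lambda_{k},\tau)\to 0$, uniformly over the box $[0,\lambda_{0}]^{k}\times[0,\tau_{0}]$ once $\lambda_{0},\tau_{0}$ are taken small. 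Applying the lemma on each connected component of $D$ separately, $F$ acquires exactly one root in $D_{-}$ (again real and negative by the simplicity/conjugation argument) and one root in each $\varepsilon$-ball; as $F$ has real coefficients the latter two form a conjugate pair, lying in the right half-plane because the balls do. Since the base case $(\lambda_{1},\dots,\lambda_{k},\tau)=(0,\dots,0)$ is exactly $P_{n,p}$, which already has the desired roots, the closed endpoints are included as well, and this is precisely the claim with $a=p$.

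The one point requiring care is purely a matter of bookkeeping rather than a genuine obstacle: the disks comprising $D$ must be chosen small and disjoint --- and, in the second stage, off the real axis and on the correct side of it --- before the perturbation lemma is invoked, and the bound on $F-P_{n,p}$ over $D$ must be made uniform over the whole parameter box, not merely pointwise in the parameters. Both follow at once from continuity and compactness, so no idea beyond the Rouch\'e principle quoted above is needed.
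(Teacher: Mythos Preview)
Your proposal is correct and follows essentially the same two-stage Rouch\'e argument as the paper: first perturb $(z+1)z^{k}$ by the constant $p$ to locate a negative root near $-1$ and a conjugate pair near the $k$-th roots of $-p$, then perturb further in $\lambda_{1},\dots,\lambda_{k},\tau$ using continuity and compactness. You have simply supplied a bit more detail on the disk choices and on why the root in $D_{-}$ must be real, but the structure matches the paper's proof exactly.
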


The case $n=3$ of the characteristic equation (\ref{ChE}), which is not covered by Proposition \ref{prop6}, was treated in detail in paper \cite{IvaBLW04}.
The possibility for it to have a negative root and a pair of complex conjugate roots with positive real parts was shown there in a general setting
(see \cite[Lemma 2.3 and Remark 2.4]{IvaBLW04}). An alternative example can be given by the following equation
$$
(z+1)(z+0.5)(z+0.25)+2.125\,\exp\{-z\,\tau\}=0,
$$
which has such a triplet of solutions for all sufficiently small values of $\tau, 0<\tau\le\tau_0$ for some $\tau_0>0$. The numerical approximation shows the following outcome:
\begin{itemize}
\item[(i)] $\tau=0.5, z_1 \approx -14.1259, z_2 \approx -2.5878, z_{3,4} \approx 0.2206 \pm 0.9359\,i;$
\item[(ii)] $\tau=0.25, z_1 \approx -41.5326, z_2 \approx -2.1551, z_{3,4} \approx 0.1637 \pm 1.0118\,i$;
\item[(iii)] $\tau=0.1, z_1 \approx -140.7464, z_2 \approx -1.9947, z_{3,4} \approx  0.1167 \pm 1.0558\,i$,
\end{itemize}
with the $\tau_0$ estimated at $\tau_0\approx 0.741005$. For this equation one also has that the parameter value $a_0$ belongs to the interval  $a_1<a_0<a_2$.

\subsection{Periodic solutions}

The problem of existence of periodic solutions to particular cases of system (\ref{sdds}) has been addressed in multiple publications. The scalar equation was studied by Hadeler and Tomiuk in \cite{HadTom77}. The two-dimensional case was treated by an der Heiden in \cite{adH79a} for a scalar second order nonlinear delay equation. The three dimensional case was studied  by Ivanov and Lani-Wayda in \cite{IvaBLW04}.
Two versions of the general $n$-dimensional case were considered in papers \cite{HalIva93,Mah80}  (which are essentially higher order scalar nonlinear equations in both papers). Hale and Ivanov viewed it from the singular perturbation point of view, while Mahaffy assumed certain parameters of the system to be large. The knowledge about existence of periodic solutions for general system (\ref{sdds}) remains incomplete.

Proofs of the existence of periodic solutions in all the cases mentioned above use principal ideas of the ejective fixed point techniques applied to appropriately constructed cones on the space of initial functions (see e.g. monographs \cite{DieSvGSVLWal95,Hale} for details of the ejective fixed point theory). In part it requires the leading eigenvalue $z=\alpha+i\,\beta, 0<\beta<\pi/\tau$ to have positive real part $\alpha>0$. Additionally the assumption that this eigenvalue is unique is also used. We shall show by the following example that in general such uniqueness is not guaranteed, and there can be multiple eigenvalues with  positive real parts and the imaginary parts between $0$ and $\pi/\tau$.

\begin{proposition}\label{prop7}
Consider characteristic equation (\ref{ChE}) for $a>0$ and $\tau>0$. Let $a_k, k\in\mathbb N,$ be the values of parameter $a$
as defined by Lemma \ref{pure imagi}.
\begin{itemize}
\item[(i)] For any $n\le4$ and all $a>a_1$, equation (\ref{ChE}) has one and only one eigenvalue $\alpha+i\,\beta$ with $\alpha>0$ and $0<\beta<\pi/\tau$;
\item[(ii)] For every $n\ge5$ with $4m-3\le n\le 4m$ for some $m\ge2$, there exists an example of  equation (\ref{ChE}) such that it has $m$ pairs of complex conjugate roots $\alpha_j\pm i\,\beta_j, 1\le j\le m,$ with $\alpha_j>0$ and $0<\beta_j<\pi/\tau$ for all $a\in(a_m,a_m^*)$, where $a_m^*$ is a finite value dependent on $\lambda_j, 1\le j\le n$. Their real and imaginary parts are ordered as follows
    $$
    0<\alpha_m<\cdots<\alpha_1;\quad 0<\beta_1<\cdots<\beta_m<\pi/\tau;
    $$
\item[(iii)] Given equation (\ref{ChE}) with arbitrary $n\ge1$, there exists a finite $a^*\ge a_1$ such that for all $a>a^*$ it has one and only one eigenvalue  $\alpha+i\,\beta$ with $\alpha>0$ and $0<\beta<\pi/\tau$.
    It is the leading eigenvalue $\alpha_1+i\,\beta_1$ which imaginary part is $\omega_1$ at the parameter value $a=a_1$.
\end{itemize}
\end{proposition}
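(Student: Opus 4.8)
The plan is to reduce all three parts to one elementary fact about the purely imaginary roots located in Lemma~\ref{pure imagi}. Recall from that lemma that every pair of complex conjugate roots with positive real part is born from a pair $\pm\omega_k i$ at $a=a_k$, with $\beta_k(a_k)=\omega_k$, $\beta_k(a)$ strictly increasing in $a$, $\lim_{a\to\infty}\beta_k(a)=\pi(2k-1)/\tau$; that $\alpha_j(a)>\alpha_l(a)$ and $\beta_j(a)<\beta_l(a)$ whenever $j<l$; and that the leading root obeys $0<\beta_1(a)<\pi/\tau$ for all $a\ge a_1$. Recall also that $\omega_k$ is the unique abscissa where the strictly increasing, concave-down curve $Y=\Theta_0(\omega)=\sum_{j=1}^n\tan^{-1}(\omega/\lambda_j)$, $\Theta_0(0)=0$, $\Theta_0(+\infty)=n\pi/2$, meets the strictly decreasing line $Y=\pi(2k-1)-\omega\tau$. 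Evaluating both at $\omega=\pi/\tau$, where the line takes the value $2\pi(k-1)$, and using that the curve lies below the line for $\omega<\omega_k$ and above it for $\omega>\omega_k$, one obtains the key equivalence
\[
\omega_k<\pi/\tau \ \Longleftrightarrow\ \Theta_0(\pi/\tau)>2\pi(k-1),\qquad \omega_k>\pi/\tau \ \Longleftrightarrow\ \Theta_0(\pi/\tau)<2\pi(k-1).
\]
Since $0<\Theta_0(\pi/\tau)<n\pi/2$, only finitely many $k$ satisfy $\omega_k<\pi/\tau$, each with $k-1<n/4$, while $\omega_1<\pi/\tau$ always holds.

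For part (iii), let $k_0\ge1$ be the largest index with $\omega_{k_0}<\pi/\tau$. For each $k\ge2$ with $\omega_k<\pi/\tau$, the map $\beta_k(\cdot)$ is continuous, strictly increasing, equals $\omega_k<\pi/\tau$ at $a=a_k$ and tends to $\pi(2k-1)/\tau>\pi/\tau$, so there is a unique finite $a_k^\ast>a_k$ with $\beta_k(a_k^\ast)=\pi/\tau$. Set $a^\ast=\max\{a_1,a_2^\ast,\dots,a_{k_0}^\ast\}<\infty$ (equal to $a_1$ when $k_0=1$). For $a>a^\ast$ the leading root satisfies $0<\beta_1(a)<\pi/\tau$; every root with $2\le k\le k_0$ satisfies $\beta_k(a)>\pi/\tau$; and every root born later, $k>k_0$, satisfies $\beta_k(a)\ge\omega_k\ge\pi/\tau$, with strict inequality once $a>a_k$ (equality can occur only when $\omega_{k_0+1}=\pi/\tau$, and only at $a=a_{k_0+1}$, a boundary point of the strip, not an interior one). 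Hence the leading root is the unique eigenvalue with $\alpha>0$ and $0<\beta<\pi/\tau$ for all $a>a^\ast$, and its imaginary part equals $\omega_1$ at $a=a_1$. Part (i) is the special case $n\le4$: then the key equivalence with $k=2$ gives $\Theta_0(\pi/\tau)<n\pi/2\le2\pi=2\pi(2-1)$, hence $\omega_k>\pi/\tau$ for every $k\ge2$, so $k_0=1$ and $a^\ast=a_1$.

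For part (ii), fix $\tau>0$ and $n$ with $4m-3\le n\le4m$, $m\ge2$. Since $\tan^{-1}(\pi/(\tau\lambda_j))\to\pi/2$ as $\lambda_j\to0^+$, we have $\Theta_0(\pi/\tau)\to n\pi/2$ as $\max_j\lambda_j\to0^+$; because $n>4(m-1)$ we may fix $\lambda_1,\dots,\lambda_n>0$ small enough that $2\pi(m-1)<\Theta_0(\pi/\tau)$, the inequality $\Theta_0(\pi/\tau)<2\pi m$ being automatic from $n\le4m$. By the key equivalence, $\omega_m<\pi/\tau$ and $\omega_{m+1}>\pi/\tau$, so $0<\omega_1<\dots<\omega_m<\pi/\tau$. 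Let $a_m^\ast>a_m$ be the unique (finite) value with $\beta_m(a_m^\ast)=\pi/\tau$, exactly as above. For every $a\in(a_m,a_m^\ast)$ the $m$ roots born at $a_1,\dots,a_m$ satisfy $0<\beta_1(a)<\dots<\beta_m(a)<\pi/\tau$ and $0<\alpha_m(a)<\dots<\alpha_1(a)$ by Lemma~\ref{pure imagi}(iii), whereas any root born at some $a_k$ with $k\ge m+1$ has $\beta_k(a)\ge\omega_{m+1}>\pi/\tau$. Hence exactly $m$ eigenvalues lie in the required strip, with the stated orderings, and $a_m^\ast$ is finite and determined by $\lambda_1,\dots,\lambda_n$ and $\tau$.

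The crux of the whole argument is the key equivalence: reading off correctly from the geometry in Lemma~\ref{pure imagi} that the sign of $\Theta_0(\pi/\tau)-2\pi(k-1)$ decides on which side of $\pi/\tau$ the root $\omega_k$ lies, and then observing that $\Theta_0(\pi/\tau)$ can be pushed arbitrarily close to its supremum $n\pi/2$ by shrinking the $\lambda_j$. Once these are in hand, all three assertions follow from the monotonicity and ordering of $\alpha_j(a),\beta_j(a)$ already established in Lemma~\ref{pure imagi}, and the rest is bookkeeping.
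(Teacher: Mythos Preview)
Your proof is correct and follows essentially the same approach as the paper: both arguments hinge on the geometric criterion that $\omega_k<\pi/\tau$ precisely when $\Theta_0(\pi/\tau)>2\pi(k-1)$, force this inequality in part (ii) by taking the $\lambda_j$ small, and use the monotonicity and limits of $\beta_j(a)$ from Lemma~\ref{pure imagi} for part (iii). Your presentation is somewhat tidier in that you isolate the key equivalence once and apply it uniformly to all three parts, whereas the paper argues each case more directly; the mathematical content is the same.
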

\begin{proof}
We refer here to Lemma \ref{pure imagi} and its proof for the notations and related facts established there.

(i)\,
Recall that function $\Theta_0(\omega)$ is increasing, concave down, and satisfying $\Theta_0(0)=0, \Theta_0(+\infty)=n\pi/2$.
For the bifurcation value $\omega_1, 0<\omega_1<\pi/\tau,$ consider the corresponding eigenvalue $\alpha_1(a)+i\,\beta_1(a)$.
By Lemma \ref{pure imagi}, (iii), $\alpha_1(a)>0$ and $0<\beta_1(a)<\pi/\tau$ for all $a\ge a_1$.
When $n\le4$, the second bifurcation value $\omega_2$ satisfies $\pi/\tau<\omega_2<3\pi/\tau$ (as it is found from the intersection of the graphs of $y=\Theta_0(\omega)$ and $y=-\omega\tau+3\pi$).
For the corresponding eigenvalue $\alpha_2(a)+i\,\beta_2(a)$ its imaginary part $\beta_2(a)$ is increasing for $a>a_2$ with $\lim_{a\to\infty}\beta_2(a)=3\pi/\tau.$
Since the sequence $\omega_k, k\in\mathbb N,$ is increasing there are no other eigenvalues $\alpha_k(a)+i\,\beta_k(a), k\ge2,$ with $0<\beta_k(a)<\pi/\tau$ for $a\ge a_k$.

(ii)\,
Let $n$ satisfy $5\le n\le8$, so that $m=2$. Since $\lim_{\omega\to\infty}\Theta_0(\omega)=n\pi/2>2\pi$, there is a possibility in this case that the graph of $y=\Theta_0(\omega)$ intersects with the line $y=3\pi-\omega\tau$ at a value $\omega_2$ satisfying $\omega_2<\pi/\tau$ (see Fig. 1, $n=6$). This would give a second pair of eigenvalues $\alpha_2(a)\pm i\,\beta_2(a)$ with $\alpha_2(a)>0$ and $0<\beta_2(a)<\pi/\tau$ in some right neighborhood of $a_2$, $(a_2,a_2^*)$.
Such second pair will always exist when $\Theta_0(\pi/\tau)>2\pi$.
This situation can be achieved for example when $\Theta_0^\prime(0)=\sum_{j=1}^{n}(1/\lambda_j)$ is made sufficiently large.
One can easily see that there exists sufficiently small $\lambda_0>0$ such that for any choice of $\lambda_j\in(0,\lambda_0), 1\le j\le n,$
the characteristic equation (\ref{ChE}) has the required second pair of complex conjugate solutions.
From Lemma \ref{pure imagi} (iii) and Claim \ref{prop0} it follows that the inequalities $\alpha_1(a)>\alpha_2(a)>0$ and $0<\beta_1(a)<\beta_2(a)$ are preserved for all $a\ge a_2.$ Besides one also has that $\beta_2(a)<\pi/\tau$ for all $a\in(a_2, a_2^*)$ and some $a_2^*>a_2$. Since $n\le 8$ all other values $\omega_k, k\ge 3,$ satisfy $\omega_3>\pi/\tau$. Therefore, in the case $m=2$ equation (\ref{ChE}) can have no more than two complex conjugate roots with the imaginary part in the interval $(0,\pi/\tau)$.

The general case of $4m-3\le n\le 4m$ for some $m\ge3$ is treated similarly to the case $m=2$ above. Since $\lim_{\omega\to\infty}\Theta_0(\omega)=n\pi/2>2(m-1)\pi$ the first $m$ intersections of $y=\Theta_0(\omega)$ with the lines $y=-\omega\tau+\pi(2k-1)$ can all satisfy $0<\omega_k<\pi/\tau, 1\le k\le m$. This will be the case when the inequality $\Theta_0(\pi/\tau)>2(m-1)\pi$ holds. The latter can be achieved when $\Theta^\prime(0)$ is made large enough. Which in turn will be the case  when all $\lambda_j, 1\le j\le n,$ are small enough, $\lambda_j\in (0,\lambda_0)$ for some $0<\lambda_0\ll1$. For the respective complex conjugate roots $\alpha_k(a)\pm i\beta_k(a)$ of the characteristic equation (\ref{ChE}) the order
$$
\alpha_1(a)>\alpha_2(a)>\dots \alpha_m(a)\quad\text{and}\quad 0<\beta_1(a)<\beta_1(a)<\dots <\beta_m(a)<\frac{\pi}{\tau}
$$
will be preserved for all $a\in(a_m,a_m^*)$ and some $a_m^*>a_m$. Since $\omega_{m+1}>\pi/\tau$ there are no other eigenvalues with the imaginary part within the interval $(0,\pi/\tau)$.  As an illustration see Fig. 1 for the case $n=10\, (m=3)$ where the inequalities $0<\omega_1<\omega_2<\omega_3<\pi/\tau$ and the inclusion $\omega_4\in (\pi/\tau,3\pi/\tau)$ hold.

(iii)\,
As it is shown in part (ii) above for every fixed $n$ such that $4m-3\le n\le4m$ for some positive integer $m$ there can be only a maximum $m$ of eigenvalues with the imaginary part between $0$ and $\pi/\tau$. For the first eigenvalue $\alpha_1(a)+i\,\beta_1(a)$ one has that the inequality $0<\beta_1(a)<\pi/\tau$ holds for all $a\ge a_1$. Since $\beta_j(a)$ is increasing in $a>0$ and $\lim_{a\to\infty}\beta_j(a)=\pi(2j-1)/\tau, j\in\mathbb N,$ one sees that there exists $a^*\ge a_1$ such that $\beta_j(a)$ satisfies $\beta_j(a)>\pi/\tau$ for all $a\ge a^*$ and $j\ge2$.
\end{proof}

Our numerical calculations show two pairs of complex conjugate solutions with  positive real part
and the imaginary part within $(0,\pi/\tau)$ for the following values of the parameters:
$$
n=5,\quad \tau=1,\quad \lambda_k=1/2^k, \, k=1,2,\dots,5, \quad a=200,
$$
$$
z_{1,2} \approx 1.5456 \pm 0.9058\,i,  \quad z_{3,4} \approx 0.2221\pm 2.6734\,i.
$$

For the range $9\le n\le 12$ one can get three pairs of complex conjugate solutions with the same as above properties.
Our calculations show the following outcome:
$$
n=9,\quad \tau=1,\quad \lambda_k=1/2^k, \, k=1,2,\dots,9, \quad a=10^4,
$$
$$
z_{1,2} \approx 1.9499 \pm 0.6159\,i, \quad z_{3,4} \approx 1.3703 \pm 1.7812\,i, \quad z_{5,6} \approx 0.1272 \pm 2.7047\,i.
$$
In view of the considerations in this subsection we are in a position to state the following conjecture.

\begin{conjecture}\label{conj1}
Assume all the hypotheses on the nonlinearities $f_j, 1\le j\le n,$ as stated in the Introduction.
Suppose in addition that $a>\max\{a_0, a_1\}$, where $a_0$ and $a_1$ are defined in Lemmas \ref{lem_real} and \ref{pure imagi}.
Then system (\ref{dds}) has a nontrivial slowly oscillating periodic solution.
\end{conjecture}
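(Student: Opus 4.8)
We outline a possible route toward Conjecture \ref{conj1}. The natural approach is to extend to the full cyclic system \eqref{sdds} the ejective fixed point technique that settled the low-dimensional cases $n=1,2,3$ in \cite{HadTom77,adH79a,IvaBLW04}. First one prepares the setting. By Lemmas \ref{invar1} and \ref{lemma2} the set $\mathbb X_{I_0}$ is positively invariant and absorbs every solution of \eqref{sdds} in finite time, so one may work inside the compact global attractor; in particular all solutions are uniformly ultimately bounded. Because $a>a_0$, Lemma \ref{lem_real} gives that \eqref{ChE} has no real root, and then Theorem \ref{T1T} (under the mild extra regularity it needs, e.g.\ $f_j'$ H\"older near $0$) yields that every non-trivial solution of \eqref{sdds} oscillates, and---as observed in Section \ref{osc}---in the strong sense that each component changes sign along a sequence $s_j\to+\infty$. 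One then introduces a closed convex cone $\mathcal K\subset\mathbb X$ of ``slowly oscillating admissible data'': $(\varphi,x_2,\dots,x_n)\in\mathcal K$ if $\varphi$ has at most one sign change on $[-\tau,0]$ and is, say, nonnegative on a left neighbourhood of $0$, the coordinates $x_2,\dots,x_n$ carry the signs forced by the positive-feedback block $f_1,\dots,f_{n-1}$, and the norms are controlled by a multiple of the a priori bound. On $\mathcal K\setminus\{0\}$ one defines a Poincar\'e-type return map $R$ by advancing the solution until its first component completes one slow-oscillation cycle (two consecutive sign changes more than $\tau$ apart) and recording the resulting phase-space segment.

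The heart of the argument is to prove that $\mathcal K$ is invariant under $R$, i.e.\ that slow oscillation propagates around the loop. Here one uses the cyclic structure of \eqref{sdds}, the sign hypotheses \eqref{pf}--\eqref{nf}, and sign-propagation estimates in the spirit of Proposition \ref{prop3}: if $x_{j+1}$ has at most one sign change in a window of length $\tau$ and a definite sign on the rest of the window, then the scalar equation $x_j'+\lambda_j x_j=f_j(x_{j+1})$ forces $x_j$ to inherit the same structure, and going once around all $n$ equations---with the single negative feedback $f_n$ carrying the delay $\tau$---shows that consecutive zeros of $x_1$ stay more than $\tau$ apart. Equivalently, this is the statement that the integer-valued discrete Lyapunov functional of Mallet-Paret--Sell type, adapted to the monotone cyclic feedback delay system \eqref{sdds} with overall negative feedback, is non-increasing along solutions and already sits at its minimal odd value $1$ on $\mathcal K$. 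One also needs $R$ to be completely continuous on bounded subsets of $\mathcal K$, which is routine: one delay interval of smoothing together with the uniform ultimate bound gives the required compactness.

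One then shows that $0$ is an ejective fixed point of $R$. Since $a>a_1$, Lemma \ref{pure imagi} provides the leading eigenvalue $z_1=\alpha_1+i\beta_1$ of \eqref{ChE} with $\alpha_1>0$ and $0<\beta_1<\pi/\tau$, and the corresponding eigensolution $e^{\alpha_1 t}(D_1\cos(\beta_1 t+\psi_1),\dots,D_n\cos(\beta_1 t+\psi_n))$ is slowly oscillating and lies in the interior of $\mathcal K$. Combining the variation-of-constants representation around the linearization \eqref{3la}, the asymptotic-expansion machinery already used in the proof of Theorem \ref{T1T} (via \cite[Proposition 7.2]{FA}), and Corollary \ref{sC}, which excludes small solutions, one shows that any solution starting in $\mathcal K$ near $0$ has a non-vanishing component along the leading eigenspace and therefore leaves a fixed neighbourhood of $0$ after finitely many applications of $R$. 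Browder's ejective fixed point theorem (see e.g.\ \cite{DieSvGSVLWal95,Hale}) then produces a non-zero fixed point $\Psi_*\in\mathcal K$, whose orbit is the desired nontrivial slowly oscillating periodic solution of \eqref{sdds}.

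The main obstacle is twofold. The invariance of $\mathcal K$ is the hardest point: tracking precisely where the zeros of consecutive components may fall as one composes the $n$ linear-plus-feedback blocks, and proving that the relevant integer-valued functional stabilizes at $1$, demands a careful $\tau$-window bookkeeping with no scalar analogue and must accommodate the fact that $n-1$ of the feedbacks are positive. The ejectivity step becomes delicate for large $a$: by Proposition \ref{prop7}(ii), when $n\ge5$ several eigenvalues with positive real part may have imaginary part in $(0,\pi/\tau)$, so one must still rule out that a stable or ``fast-oscillating'' direction could be used to re-enter a small ball around $0$ inside $\mathcal K$. An alternative route to the whole statement would be to invoke the Poincar\'e--Bendixson theorem for monotone cyclic feedback delay systems (Mallet-Paret and Sell): \eqref{sdds} is exactly of that type; all its solutions are bounded and, for $a>a_0$, oscillatory, while for $a>a_1$ the unique equilibrium $0$ is unstable, so every $\omega$-limit set is a nonconstant periodic orbit, and it would remain only to verify that at least one such orbit carries the minimal discrete-Lyapunov value $1$---again a statement about the behaviour near $0$ governed by the leading eigenvalue.
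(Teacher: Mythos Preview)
The statement you are addressing is explicitly labeled a \emph{conjecture} in the paper, not a theorem, and the paper provides no proof. Immediately after stating it, the authors write that the conjecture ``is a proven fact for the cases $n=1,2$ and $3$'' (citing \cite{HadTom77,adH79a,IvaBLW04}) and that ``it is an open problem for the general case $n\ge4$.'' So there is no ``paper's own proof'' to compare your proposal against.

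Your text is not a proof either, and you say so yourself: it is an outline of a plausible programme (ejective fixed point argument on a cone of slowly oscillating data, or alternatively the Mallet-Paret--Sell Poincar\'e--Bendixson machinery), together with an honest list of the obstructions. The two places you flag as ``the main obstacle''---the forward invariance of the cone $\mathcal K$ (equivalently, that the discrete Lyapunov functional stabilizes at its minimal value on $\mathcal K$) and the ejectivity of $0$ in the presence of several unstable eigenvalues with imaginary part in $(0,\pi/\tau)$ for $n\ge5$---are precisely the steps that have no proof in the literature for general $n$. In particular, the paper's Proposition \ref{prop7}(ii) was written exactly to document that the uniqueness of the leading slowly-oscillating eigenvalue, which is \emph{used} in the $n\le3$ proofs, can fail for $n\ge5$; you have identified the difficulty but not resolved it. Thus your proposal is consistent with the paper's position: it sketches the expected route and correctly isolates why the general case remains open, but it does not close the gap.
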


The slow oscillation here is meant in the sense of papers \cite{HalIva93,Mah80,mps,mps2}. Every component $x_j, 1\le j\le n,$ is slowly oscillating, and there is one and only one zero of any other component $x_l, l\ne j,$ in between any two consecutive zeros of the component $x_j$.

The conjecture is a proven fact for the cases $n=1, 2$ and $3$, which was established in papers \cite{HadTom77},\cite{adH79a} and \cite{IvaBLW04}, respectively. It is an open problem for the general case $n\ge4$.
\vspace{0.2cm}

\subsection*{Appendix: Proof of Theorem \ref{Tdr}.}
The  change of variables $y(t) = e^{\lambda  t}(p +z(t))$ transforms (\ref{pel}) into
\begin{equation}
\label{3nacAp}
z'(t) + \lambda z(t) = L[e^{\lambda \cdot}z(t+\cdot)]+M_1(t,z(t+\cdot)),
\end{equation}
where
$$
M_1(t,z(t+\cdot))=
 A(t)[e^{\lambda\cdot}(p+z(t+\cdot))].
$$
Fix some $\sigma > t_0+2\tau$ and $r \in (0, 0.1\min\{|p_j|, j =1, \dots,n\})$ and observe that, for $|u(t)|, |v(t)| \leq r$, $t \geq \sigma-\tau$, it holds
$$
|M_1(t,u(t+\cdot))|\leq
|A(t)||e^{\lambda\cdot}(p+u(t+\cdot))|_C\leq  2|p|q(t)e^{-\lambda\tau} \leq   (2|p|+1)e^{-\lambda\tau}q(t) =:K_0q(t);
$$
$$
|M_1(t,u(t+\cdot))- M_1(t,v(t+\cdot))|\leq
|A(t)||e^{\lambda\cdot}(u(t+\cdot)- v(t+\cdot))|_C\leq  q(t)K_0|u(t+\cdot)- v(t+\cdot)|_C.
$$
Here we are using the usual notation $|w(t+\cdot)|_C = \sup\{|w(t+s)|, \ s \in [-\tau,0]\}.$
As we can see, the nonlinearity $M_1$ is uniformly  dominated by an integrable function on $\R_+$.

Clearly, the eigenvalues of the linear system
\begin{equation}
\label{3lacAP}
z'(t) + \lambda z(t) = L[e^{\lambda \cdot}z(t+\cdot)]
\end{equation}
(or, equivalently, $z'(t) = {\mathcal L}z_t$, where ${\mathcal L} \phi =  - \lambda \phi(0)+ L[e^{\lambda \cdot}\phi(+\cdot)]$ and $z_t(s) = z(t+s)$)
can be obtained from the eigenvalues of $x'(t)=Lx_t$ by adding $-\lambda$.  In particular, they contain $0$ and
a finite set {\color{red} $\{\mu_j: j \in J\subset\mathbb N\}$} (possibly, empty) of complex eigenvalues $\mu_j$ with positive real parts:
$\mu_j = \alpha_j + i\beta_j$,  $\alpha_j >0$, $\beta_j \not=0$.
This implies that the fundamental matrix solution $Z(t), \ t \geq 0,$ for system (\ref{3lacAP}) can be written as
$$
Z(t) = \sum_{j\in J} e^{\mu_j t}P_j(t)  + Q(t) + Z_-(t),
$$
where $P_j(t)$ are complex matrix polynomials, $Q(t)$ is a real matrix polynomial of degree $m-1$ and the inequality
$|Z_-(t)| \leq de^{-\gamma t}, t \geq 0$, holds for some positive $d$ and $\gamma$.

Take now {$\rho < \min \{0.25, r\}$.} Let  $\sigma$ be sufficiently large to satisfy {$|u+p|e^{\lambda (t-\tau)} \leq r$ } for $t \geq \sigma$, $|u| \leq r$, as well as
\begin{equation}\label{K0AP}
K_0\left(  \sum_{j\in J} \int_t^{+\infty }e^{\Re \mu_j (t-s)}\|P_j(t-s)\| q(s)ds + \int_t^{+\infty } \|Q(t-s)\| q(s)ds   +  \int_\sigma^t\|Z_-(t-s)\|  q(s)ds \right) < \rho, \ t \geq \sigma.
\end{equation}
Note that since $q(s)s^{m-1}$ is
integrable on $[t_0, +\infty)$ and $\|Q(s)\| = s^{m-1}(1+o(1)), s \to +\infty,$ the second integral in (\ref{K0AP}) is converging to $0$ as $t \to +\infty$.

 {Our  goal is to prove that, for sufficiently large $\sigma$,  system (\ref{3nacAp}) has a solution $z:[\sigma+\tau, +\infty) \to \R^n$ such that $|z_j(t)| \leq 0.1 \min_{k}|p_k|$,  $t \geq \sigma+\tau$,  for each $j$.}  Clearly, this implies that the solution $y(t) = e^{\lambda t}(p +z(t)),$  {$ t > T$,}  of the original system (\ref{pel}) does not oscillate
around $0$.

To this end, for $t \geq \sigma$, consider the integral equation
\begin{equation}\label{calAp}
z(t) = \mathcal A z(t): =  -\sum_{j\in J} \int_t^{+\infty }e^{\mu_j (t-s)}P_j(t-s) M_1(s)ds - \int_t^{+\infty} Q(t-s)  M_1(s)ds
 + \int_\sigma ^t Z_-(t-s) M_1(s)ds, 
\end{equation}
where the abbreviation $M_1(s)= M_1(s, z_1(s-\tau), z_2(s),\dots,z_n(s))$ is used.
We will assume that  $z(s) \equiv 0$ on $[\sigma-\tau, \sigma)$.
Suppose that this equation has a solution $z_*: [\sigma, +\infty) \to \{u: |u|\leq r\}$ (observe that we do not claim that $z_*(\sigma) =0$, this property is not relevant for our further analysis).  Then  $z_*(t), \ t \geq \sigma+ \tau$, also satisfies system (\ref{3nacAp}). Indeed, after differentiating equation (\ref{calAp}) with respect to $t \geq \sigma+\tau$, we obtain  that
\begin{eqnarray*}
z'(t) &=& (\sum_{j\in J} P_j(0) +Q(0)+Z_-(0))M_1(t)-\sum_{j\in J} \int_t^{+\infty }(e^{\mu_j (t-s)}P_j(t-s))' M_1(s)ds\\
      &-& \int_t^{+\infty} Q'(t-s) M_1(s)ds + \int_\sigma ^t Z_-'(t-s) M_1(s)ds\\
      &=& M_1(t)-\sum_{j\in J} \int_t^{+\infty }{\mathcal L}(e^{\mu_j (t+\cdot-s)}P_j(t+\cdot-s)) M_1(s)ds\\
      &-& \int_t^{+\infty} ({\mathcal L}Q(t+\cdot-s))  M_1(s)ds + \int_\sigma ^t ({\mathcal L}Z_-(t+\cdot-s)) M_1(s)ds\\
      &=& M_1(t)+{\mathcal L}\left(-\sum_{j\in J} \int_{t+\cdot}^{+\infty }e^{\mu_j (t+\cdot-s)}P_j (t+\cdot-s)M_1(s)ds\right.\\
      &-& \left.\int_{t+\cdot}^{+\infty} Q(t+\cdot-s)  M_1(s)ds + \int_\sigma ^{t+\cdot} Z_-(t+\cdot-s) M_1(s)ds\right)\\
      &=& {\mathcal L}z_t+ M_1(t).
\end{eqnarray*}
In the fist line  of the above chain of equalities, we invoke the relation $\sum_{j\in J} P_j(0) +Q(0)+Z_-(0) = Z(0) =I$. Next, in  order to prove the equality in the fifth line, we use the standard representation (cf.  \cite[p. 193]{Hale}) of ${\mathcal L}\phi$ in the form of the Riemann-Stieltjes  integral (i.e.
${\mathcal L}\phi=\int_{-\tau}^0d\eta(s)\phi(s)$) as well as the Fubini theorem together with the property of the fundamental matrix solution that  
$Z(s)=0$ for $s \in [-\tau,0)$.

Finally, take a bounded continuous function $u(t)$, $|u(t)| \leq r,  \ t \geq \sigma,$ and set $u(t) \equiv 0$ for $t < \sigma$. {The set $\frak B$ of all such functions  equipped with
the distance $\rho(u,v) = \sup_{s \geq \sigma}|u(s)-v(s)|$ is a complete metric space. }
Then, for all $ t \geq \sigma,$ one has
$$
|\mathcal A u(t)| \leq  K_0\left(  \sum_{j\in J} \int_t^{+\infty }e^{\Re \mu_j (t-s)}\|P_j(t-s)\| q(s)ds + \int_t^{+\infty } \|Q(t-s)| q(s)ds   +  \int_\sigma^t\|Z_-(t-s)\|  q(s)ds \right) < r.
$$
{Thus the operator ${\mathcal A}: {\frak B} \to \frak{B}$ is well defined. Moreover, ${\mathcal A}$ is a contraction in view of the following relation: }
{
$$
|\mathcal A v(t)- \mathcal A u(t)| \leq  \rho \sup_{t \geq \sigma} |\left(v_1(t)- u_1(t), \dots, v_n(t)-u_n(t)\right)| \leq 0.5 \sup_{t \geq \sigma} |v(t)- u(t)|,
 \ t \geq \sigma.
$$
}Hence, the Banach contraction principle guarantees the existence of the required solution $z_*\in \frak B$. \hfill $\square$

\vspace{0.2cm}

{\bf{Acknowledgement}}.
This research project was initiated in July 2016 during the BIRS Workshop "Research in Teams", 16RiT675 (BIRS, Banff, AB, Canada).
The authors are grateful to the BIRS for providing the excellent collaborative environment for our group.
E. Braverman was partially supported by the NSERC research grant RGPIN-2015-05976.
In the follow up work on this paper, A. Ivanov was partially supported by the Alexander von Humboldt Stiftung (Germany).
S. Trofimchuk gratefully acknowledges the support of  FONDECYT (Chile), project 1150480.

The authors are grateful to the anonymous referees whose valuable comments and suggestions significantly contributed to better presentation of our results.


\begin{thebibliography}{99}

\bibitem{adH79a}
U. an der Heiden, {\it Periodic solutions of a nonlinear second order differential
equation  with delay.}  J. Math. Anal. Appl. {\bf 70} (1979), 599-609.

\bibitem{BC}  R. Bellman, K. L. Cooke, Differential-difference equations.  Academic Press, New York/London, 1963.

\bibitem{BR} H. Brezis, Functional analysis, Sobolev spaces and partial differential equations.  Springer, New York, 2011.


\bibitem{DieSvGSVLWal95}
 O. Diekmann, S. van Gils, S.Verdyn Lunel, and H.O.Walther, Delay
Equations: Complex, Functional,  and Nonlinear Analysis.
Springer-Verlag, New York, 1995.

\bibitem{ES}
\'A. Elbert, I.P. Stavroulakis, {\it Oscillation and nonoscillation criteria for delay differential equations.}
Proc. Amer. Math. Soc. {\bf 123} (1995) 1503-1510.

\bibitem{Ern09}
T. Erneux, Applied Delay Differential Equations.
Ser.: Surveys and Tutorials in the Applied Mathematical Sciences {\bf 3}.
Springer Verlag, 2009, 204 pp.

\bibitem{Con78}
John B. Convey, Functions of One Complex Variable I. Ser.: Graduate Texts in Mathematics, Second Edition, Springer, 1978.

\bibitem{Goo65}
B.C. Goodwin, {\it Oscillatory behaviour in enzymatic control process.}
 Adv. Enzime Regul. {\bf 3} (1965), 425--438.

\bibitem{GyoLad91}
I. Gy\"{o}ry and G. Ladas, Oscillation Theory of Delay Differential Equations.  Oxford Science Publications, Clarendon Press, Oxford, 1991, 368 pp.

\bibitem{Had79}
K.P. Hadeler, {\it Delay equations in biology.} Springer Lecture Notes in Mathematics {\bf 730} (1979), 139--156.

\bibitem{HadTom77}
 K.P. Hadeler and J. Tomiuk, Periodic solutions of difference
differential equations. {\it Arch. Rat. Mech.  Anal.} {\bf 65}
(1977), 87--95.

\bibitem{HalIva93}
J.K. Hale and A.F. Ivanov,  {\it On a high order differential delay equation.} J. Math. Anal. Appl. {\bf173} (1993), 505--514.

\bibitem{Hale} J. K.  Hale and  S. M. Verduyn Lunel, Introduction to
functional differential equations, Applied Mathematical  Sciences,
Springer-Verlag, 1993.

\bibitem{Hop82}
J. Hopfield, {\it Neural networks and physical systems with emergent collective computational abilities.}
Proc. Natl. Acad. Sci. USA {\bf 79} (1982), 2554--2558.

\bibitem{IvaBLW04}
A.F. Ivanov and B. Lani-Wayda,
{\it Periodic solutions for three-dimensional non-monotone cyclic systems with time delays.}
Discrete and Continuous Dynamical Systems {\bf 11} (2004), nos. 2\&3, 667--692.

\bibitem{Kua93}
Y. Kuang, Delay Differential Equations with Applications in
Population Dynamics. Academic Press Inc. 2003, 398 pp. Series:
Mathematics in Science and Engineering, Vol. 191.

\bibitem{Lax} P. D. Lax,  Functional analysis, Wiley-Interscience, New York, 2002.

\bibitem{BL}
B. Li, {\it Oscillations of delay differential equations with variable coefficients.}
J. Math. Anal. Appl. {\bf 192} (1995) 312--321.

\bibitem{MacGla77}
 M.C. Mackey and L. Glass, {\it Oscillation and chaos in physiological control systems.}
Science {\bf 197} (1977), 287--289.

\bibitem{Mah80}
J. Mahaffy,  {\it Periodic solutions of certain protein synthesis models.}
J. Math. Anal. Appl. {\bf 74} (1980), 72--105.


\bibitem{M-P88}
J. Mallet-Paret, {\it Morse decompositions for delay differential equations.}
J. Differential Equations {\bf 72} (1988), 270--315.


\bibitem{FA} 
\newblock  { J. Mallet-Paret,}
\newblock  \emph{The Fredholm alternative for functional differential equations of mixed type},
\newblock   {J. Dynam. Differential Equations}  \textbf{ 11} (1999), 1--48.


\bibitem{M-PNus89}
J. Mallet-Paret and R.D. Nussbaum,  {\it A differential delay equation arising in optics and physiology.}
SIAM J. Math. Anal. {\bf20}
(1989), 249--292.

\bibitem{mps}
\newblock {J. Mallet-Paret, G. Sell,}
\newblock \emph{Systems of delay differential equations I: Floquet multipliers and discrete Lyapunov functions},
\newblock  { J. Differential Equations} \textbf{125} (1996) 385--440.


\bibitem{mps2}
\newblock {J. Mallet-Paret, G. Sell,}
\newblock \emph{The Poincar\'e-Bendixson theorem for monotone cyclic feedback systems with delay},
\newblock  { J. Differential Equations} \textbf{125} (1996) 441--489.


\bibitem{Pituk} M. Pituk,  {\it Asymptotic behavior and oscillation of functional differential equations.}
J. Math. Anal. Appl. {\bf 322} (2006), no. 2, 1140-1158.

\bibitem{SchKliPenvPe99}
T. Scheper, D. Klinkenberg, C. Pennartz, and J. van Pelt,
{\it A Mathematical model for the intracellular circadian rhythm generator.}
The Journal of Neuroscience {\bf 19}, no. 1 (1999), 40--47.

\bibitem{ShaMaiRom93}
A.N. Sharkovsy, Yu.L. Maistrenko and E.Yu. Romanenko, Difference
Equations and Their Perturbations. Kluwer Academic Publishers,
1993, vol. 250, 358 pp.

\bibitem{Smi11}
H. Smith, An Introduction to Delay Differential Equations with Applications to the Life Sciences.
Springer-Verlag, Series: Texts in Applied Mathematics {\bf 57} (2011), 177 pp.

\bibitem{WazLas76}
M. Wazewska-Czyzewska and A. Lasota,
{\it Mathematical models of the red cell system.}
Matematyka Stosowana {\bf 6} (1976), 25--40 (in Polish)

\bibitem{Wu01}
J. Wu, Introduction to Neural Dynamics and Signal Transmission Delay.
Nonlinear Anal. Appl., vol.6, Walter de Gruyter \& Co., Berlin, 2001.


\end{thebibliography}
\end{document}